\documentclass{article}

\PassOptionsToPackage{numbers, sort&compress}{natbib}

\usepackage[preprint]{neurips_2026}

\usepackage[utf8]{inputenc} 
\usepackage[T1]{fontenc}    
\usepackage{hyperref}       
\usepackage{url}            
\usepackage{booktabs}       
\usepackage{amsfonts}       
\usepackage{nicefrac}       
\usepackage{microtype}      
\usepackage{xcolor}         

\usepackage{amsmath,amssymb,amsthm,mathtools}
\usepackage{bm}
\usepackage{algorithm}
\usepackage{float}
\usepackage{xcolor}
\usepackage{booktabs}
\usepackage[noend]{algpseudocode}
\usepackage{hyperref}
\usepackage{makecell}
\usepackage{cleveref}
\usepackage{multirow}
\usepackage{enumitem}
\usepackage{pgfplots}
\usepackage{longtable}
\usepackage{etoolbox}
\usepackage{rotating}
\usepackage{array}
\usetikzlibrary{shapes.geometric}

\newtheorem{definition}{Definition}
\newtheorem{theorem}{Theorem}
\newtheorem{lemma}{Lemma}
\newtheorem{proposition}{Proposition}
\newtheorem{corollary}{Corollary}
\theoremstyle{definition}
\definecolor{algoblue}{RGB}{0, 0, 150}

\newcommand{\R}{\mathbb{R}}

\newcolumntype{D}{>{$\,\displaystyle}l<{$}}

\title{A Parameter-Free First-Order Algorithm for Non-Convex Optimization with $\tilde{\mathcal{O}}(\epsilon^{-5/3})$ Global Rate}

\author{%
  Sichao Xiong\thanks{Corresponding author} \\
  Department of Computer Science\\
  University of Oxford\\
  Oxford, OX1 3QG \\
  \texttt{sichao.xiong@cs.ox.ac.uk} \\
  \And
  Sadok Jerad \\
  Mathematical Institue \\
  University of Oxford\\
  Oxford, OX2 6GG \\
  \texttt{sadok.jerad@maths.ox.ac.uk} \\
  \And
    Coralia Cartis \\
  Mathematical Institue \\
  University of Oxford\\
  Oxford, OX2 6GG \\
  \texttt{coralia.cartis@maths.ox.ac.uk} \\
}

\begin{document}

\maketitle

\begin{abstract}

We introduce \textsc{PF-AGD}, the first parameter-free, deterministic, accelerated first-order method to achieve  $\mathcal{O}(\epsilon^{-5/3}\log(1/\epsilon))$ oracle complexity bound when minimizing sufficiently smooth, non-convex functions; this is the best-known bound for first-order methods on smooth non-convex objectives. Unlike existing methods possessing this rate that require \emph{a priori} knowledge of smoothness constants, we use an adaptive backtracking scheme and a gradient-based restart mechanism to estimate local curvature. This yields a practical algorithm that matches best-known theoretical rates. Empirically, \textsc{PF-AGD} outperforms the practical variant of \textsc{AGD-Until-Guilty} (\citet{pmlr-v70-carmon17a}), as well as other parameter-free variants, and is a viable alternative to nonlinear conjugate gradient methods.
\end{abstract}

\section{Introduction}
Optimization algorithms are the workhorses of modern machine learning. A central question in optimization theory is the characterization of an algorithm's \emph{evaluation complexity}: for an objective function $f:\mathbb{R}^d\to\mathbb{R}$, the number of iterations and oracle calls required to achieve $\epsilon$-stationarity, i.e., a point $x$ such that $\|\nabla f(x)\|\le \epsilon$ (see \citet{nesterov2018lectures} and \citet{cartis2022evaluation} for an overview).

\paragraph{Problem Formulation.} We study unconstrained optimization problems of the form $\min_{x \in \mathbb{R}^d} f(x)$, where $f:\mathbb{R}^d\to\mathbb{R}$ is a potentially non-convex function bounded below. We assume that $f$ is $L_1$-smooth (i.e., $\nabla f$ is $L_1$-Lipschitz continuous) and has $L_3$-Lipschitz third-order derivatives, but $L_1$ and $L_3$ are unknown; in this sense, our method is parameter-free. We consider \emph{deterministic first-order} algorithms that access $f$ and $\nabla f$ through an oracle and measure complexity by the number of $\nabla f$ oracle calls to compute an $\epsilon$-stationary point. While our theoretical framework is established under standard Lipschitz smoothness assumptions, these global assumptions can be significantly relaxed; the method only requires smoothness along the actual path taken by the iterates. This enables application to problems where global smoothness fails but local smoothness holds along the iterates' trajectory.

\paragraph{Related Work.} We focus on non-convex optimization; for an extensive treatment of the convex counterpart, we refer the reader to \cite{Aspremont21}. For non-convex $L_1$-smooth objectives, steepest descent requires $\mathcal{O}(\epsilon^{-2})$ iterations to reach an $\epsilon$-stationary point \cite{alma990195539480107026}. A seminal development in non-convex optimization was the \textsc{AGD-Until-Guilty} framework \citep{pmlr-v70-carmon17a}, which pioneered the application of accelerated gradient techniques to non-convex settings. Although $f$ may not be strongly convex, \citet{pmlr-v70-carmon17a} optimize a quadratically regularized objective assuming that it is strongly convex; if the anticipated progress bounds do not hold, the algorithm issues a certificate of non-convexity and exploits negative curvature to guarantee sufficient decrease. Under $L_2$-Lipschitz Hessians and $L_3$-Lipschitz third derivatives, \textsc{AGD-Until-Guilty} variants achieve the first rates surpassing $\mathcal{O}(\epsilon^{-2})$, namely, of $\tilde{\mathcal{O}}(\epsilon^{-7/4})$ and $\tilde{\mathcal{O}}(\epsilon^{-5/3})$, respectively.

A key limitation of \textsc{AGD-Until-Guilty} is that it requires problem parameters ($L_1$ and $L_3$) to calibrate step sizes and momentum. In practice, these quantities are rarely known and difficult to estimate; moreover, the theoretically specified algorithm is impractical to run as the global constants are usually overly pessimistic. Indeed, \citet{pmlr-v70-carmon17a} report experiments using a practical variant of their algorithm where na\"ive backtracking invalidates the acceleration guarantees derived for fixed constants. Figure~\ref{fig:vanilla_AGD} illustrates this disparity: for problems where global $L_1$ and $L_3$ can be estimated, the theoretical (vanilla) implementation is up to $40\times$ more costly and fails to exploit negative curvature.

\begin{figure}[t]
  \centering
  \includegraphics[width=0.78\linewidth]{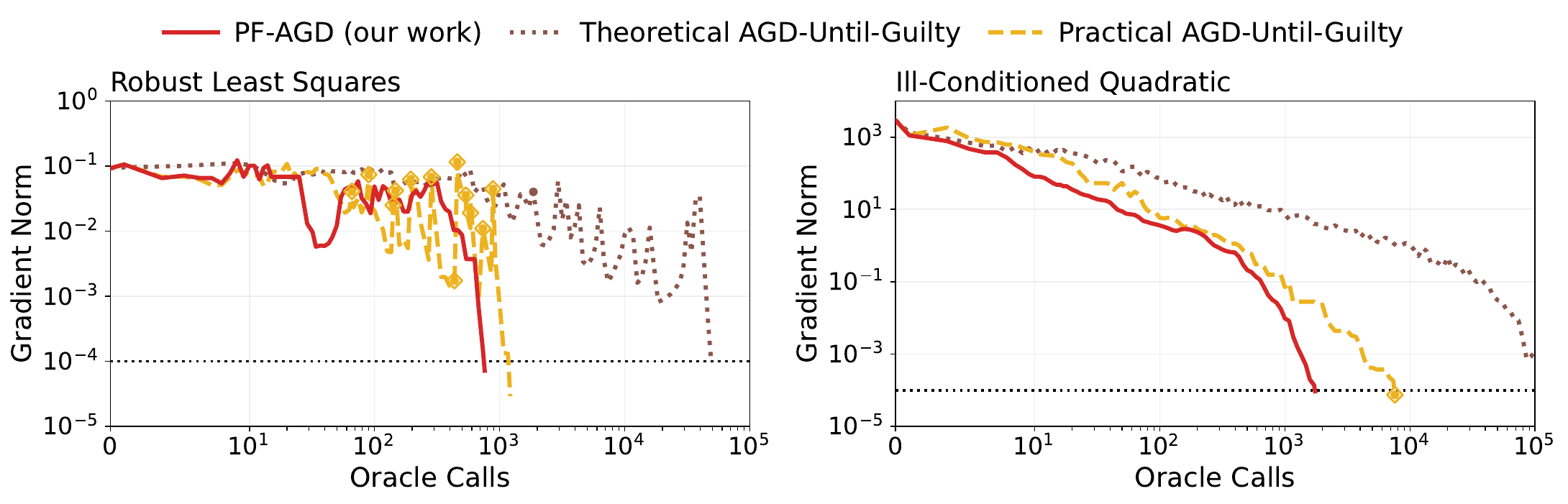}
  \caption{Theoretical and Practical \textsc{AGD-Until-Guilty} vs. \textsc{PF-AGD} with $\epsilon=10^{-4}$.}
  \label{fig:vanilla_AGD}
\end{figure}

\begin{table}[t]
\centering
\caption{Comparison of worst-case oracle complexity bounds for {\bf accelerated and quasi-Newton first-order} methods for \textbf{non-convex} optimization under different Lipschitz smoothness assumptions. ``Needs Constants?'' denotes which problem parameters must be known \textit{a priori} to achieve the stated bounds. The notation $\tilde{\mathcal{O}}(\cdot)$ suppresses poly-logarithmic factors in $\epsilon$ and the problem parameters when appropriate. For the regime of interest ($\epsilon\in (0,1)$ and sufficiently large $d$), the dimension-independent 
$\tilde{O}(\epsilon^{-5/3})$  is  best and best-known bound amongst the methods in Table \ref{table:complexity} and all first-order methods for non-convex optimization, including steepest descent methods which satisfy the tight bound of $\mathcal{O}(\epsilon^{-2})$~\cite{alma990195539480107026, cartis2022evaluation}.}

\begin{tabular}{@{}l
c@{\hspace{2pt}}c
c@{\hspace{2pt}}c@{\hspace{2pt}}c
c
l
c@{}}
\toprule
\multirow{2}{*}{Reference}
& \multicolumn{2}{c}{Oracle}
& \multicolumn{3}{c}{Lipschitz}
& \multirow{2}{*}{\makecell[c]{Conv. Rate to\\$\|\nabla f(x)\|\le\epsilon$}}
& \multirow{2}{*}{\makecell[c]{Needs\\Constants?}}
\\[-0.25ex]
\cmidrule(lr){2-3}\cmidrule(lr){4-6}
& $f(x)$ & $\nabla f(x)$
& $L_1$  & $L_2$         & $L_3$
& & \\
\midrule
\textnormal{[\citenum{pmlr-v70-carmon17a}, Theorem 1]} & \checkmark & \checkmark        & \checkmark & \checkmark & --        & $\tilde{\mathcal{O}}(\epsilon^{-7/4})$ & $L_1,L_2,\epsilon$ \\

\textnormal{[\citenum{doi:10.1137/22M1540934}, Theorem 5.7]} & \checkmark & \checkmark        & \checkmark & \checkmark & --        & $\mathcal{O}(\epsilon^{-7/4})$ & -- \\
\textnormal{[\citenum{Mokhtari25}, Theorem 4.1]} & -- & \checkmark & \checkmark & \checkmark & -- & $\mathcal{O}(d^{1/4}\epsilon^{-13/8})$ & $L_1,L_2,f^*,\epsilon$ \\
\textnormal{[\citenum{Marumo25}, Theorem 1]} & \checkmark & \checkmark & \checkmark & \checkmark & -- & $\tilde{\mathcal{O}}(d^{1/4}\epsilon^{-13/8})$ & -- \\
\textnormal{[\citenum{Grapiglia25}, Corollary 1]} & \checkmark & \checkmark & -- & \checkmark & -- & $\mathcal{O}(d^{1/2}\epsilon^{-3/2})$ & $\epsilon$ \\
\textnormal{[\citenum{pmlr-v70-carmon17a}, Theorem 2]} & \checkmark & \checkmark         & \checkmark & --        & \checkmark        & $\tilde{\mathcal{O}}(\epsilon^{-5/3})$ & $L_1,L_3,\epsilon$ \\
\textnormal{[\textbf{This work}, Theorem~\ref{Thm1}]}   & \checkmark & \checkmark        & \checkmark & --        & \checkmark        & $\tilde{\mathcal{O}}(\epsilon^{-5/3})$ & $\epsilon$ \\
\bottomrule
\end{tabular}\label{table:complexity}
\end{table}

Beyond \citet{pmlr-v70-carmon17a}, a number of approaches have sought to remove the polylogarithmic dependence in the $\tilde{\mathcal{O}}(\epsilon^{-7/4})$ bound
and achieve parameter-free convergence under $L_2$-Lipschitz Hessian assumptions \cite{doi:10.1137/22M1540934}. Quasi-Newton methods \citep{Mokhtari25, Marumo25} exploit Hessian-Lipschitz continuity to improve upon the $\mathcal{O}(\epsilon^{-7/4})$ bound, albeit with explicit dimensional dependence, while finite-difference schemes \citep{Grapiglia25, Cartis12} further reduce the rate to $\mathcal{O}(\epsilon^{-3/2})$ at the cost of stronger dependence on the problem dimension in the bound. All bounds explicitly depending on dimension $d$ will worsen with increasing problem size, while the  dimension-independent 
bounds are unaffected by such changes, thus justifying our focus here. 
While nonlinear Conjugate Gradient (CG) remains the empirical state-of-the-art for non-convex landscapes, it has historically lacked rigorous complexity bounds. Recent work by \citet{Royer22} provided the first such guarantees for CG; however, a significant theoretical gap remains as the global rate bound can be as large as that for Steepest Descent (SD) methods (namely, $\mathcal{O}(\epsilon^{-2})$). 

Despite these advances, no prior method attains the optimal $\tilde{\mathcal{O}}(\epsilon^{-5/3})$ rate without \emph{a priori} knowledge of the third-order smoothness constant $L_3$. Table~\ref{table:complexity} summarizes these developments and contextualizes our work within the broader landscape.

\paragraph{Contributions.}
A central obstacle to achieving a parameter-free algorithm is that Nesterov acceleration relies on a tight coupling between the fixed step sizes and momentum. We circumvent this limitation with \textsc{PF-AGD}, a deterministic first-order algorithm, and prove that it preserves the optimal rate without prior knowledge of problem constants. Concretely, our contributions are:

\begin{itemize}
    \item We show that optimal accelerated rates for non-convex optimization do not require prior knowledge of global smoothness constants, giving the first \emph{deterministic parameter-free} algorithm that attains an $\tilde{\mathcal{O}}(\epsilon^{-5/3})$ gradient complexity. Notably, the bound is independent of the problem dimension and is suitable for problems where $d$ may be large.

    \item Unlike existing methods that require problem constants $L_1$ and $L_3$, we develop a novel backtracking and restart scheme that estimates local curvature using only gradient information.

    \item Our method is a practical alternative to nonlinear CG and is implemented faithfully to theory. Benchmarks on standard test functions and ML tasks also show that \textsc{PF-AGD} consistently outperforms \textsc{AGD-Until-Guilty} (\citet{pmlr-v70-carmon17a}) and~\citet{doi:10.1137/22M1540934}.
\end{itemize}

\section{PF-AGD Algorithm}\label{section: algo}
Conceptually, \textsc{PF-AGD} preserves acceleration by behaving like accelerated gradient descent with adaptive $L_1$ estimation when the function is locally strongly convex, and switches to negative curvature exploitation when this assumption fails. The algorithm comprises two loops: an \textbf{inner loop} (\S~\ref{subsec:inner_loop}), indexed by $t$, which performs accelerated gradient steps while estimating $L_1$ and detecting non-convexity; an \textbf{outer loop} (\S~\ref{subsec:outer_loop}), indexed by $k$, which exploits negative curvature and refines the estimate $M_k$ of the third-order Lipschitz constant $L_3$. A violation in the inner loop triggers early termination, returning a witness pair $(u,v)$ used to update $M_k$ and produce a new iterate. The proofs justifying our design choices are deferred to Appendix~\ref{app:proofs}.

\paragraph{Adaptive Backtracking for $L_1$.} Following \citet{cavalcanti2025adaptive}, at each inner iteration $k$ we maintain an estimate $L_1^{(k)} > 0$ of the gradient Lipschitz constant and take a gradient step of size $\alpha_k = 1/L_1^{(k)}$ in direction $d_k=-\nabla f(x_k)$. Standard Armijo backtracking accepts a step if
\begin{equation}\label{eq:armijo}
    f(x_k + \alpha_k d_k) 
    \;\le\; 
    f(x_k) + c\cdot\alpha_k \langle \nabla f(x_k), d_k \rangle, 
    \qquad c \in (0,1),
\end{equation}
and otherwise shrinks $\alpha_k \leftarrow \rho\,\alpha_k$ with fixed $\rho \in (0,1)$, but this fails to preserve the $\mathcal{O}(\sqrt{Q}\log(1/\varepsilon))$ convergence rate of accelerated methods. To recover this rate, we replace $\rho$ with an adaptive factor $\hat\rho$ based on how badly the condition is violated. The violation map is
\begin{equation}
    v(\alpha_k) 
    \;:=\; 
    \tfrac{f(x_k + \alpha_k d_k) - f(x_k)}
         {c\cdot \alpha_k \langle \nabla f(x_k), d_k \rangle}, \quad \hat{\rho}\bigl(v(\alpha_k)\bigr) 
    \;:=\; 
    \max\bigl\{
        \varepsilon_{\min},\; 
        \rho{\,\tfrac{1-c}{1 - c\cdot v(\alpha_k)}}
    \bigr\},
\end{equation}
so condition \eqref{eq:armijo} can be written as $v(\alpha_k) \ge 1$, and $\varepsilon_{\min} \ll 1$ prevents numerical instability. Intuitively, the shrinkage factor reduces step sizes more aggressively for larger violations. This bounds the Lipschitz estimate ($L_1^{(k)} \le \max \{L_1^{(0)}, \tfrac{L_1}{2(1-c)\rho} \}$), preventing step sizes from collapsing to zero.

\subsection{The Inner Loop}\label{subsec:inner_loop}
We maintain three iterates: \(y_t\) (gradient step), \(x_t\) (extrapolated point), and \(w^{\min}_t\), the best auxiliary descent point selected from $\{\zeta_0,\zeta_1,\ldots,\zeta_t\}$, where $\zeta_t$ is obtained by a gradient step from $y_t$. We write $x_0^t := (x_0, x_1, \ldots, x_t)$ for the sequence of iterates through index $t$; similarly for $y_0^t$. The inner loop, \textsc{Modified-AGD}, initially assumes that $f$ is pathwise $\sigma$-strongly convex, i.e., that
\begin{equation}
    f(u) \ge f(v) + \langle \nabla f(v),\, u - v \rangle + \tfrac{\sigma}{2}\|u - v\|^2
    \label{eq:pathwise-sc}
\end{equation}
holds for all $(u,v) \in \{(y_s, x_s),(w,x_s),\, (w, y_s),\, (y_s, w)\}$ and $s = 0,\ldots,t-1$, where $w \in \mathbb{R}^d$ is the vector returned by \textsc{Certify-Progress}. Unlike global strong convexity, \eqref{eq:pathwise-sc} only requires convexity-like behavior along the trajectory of iterates. This relaxation makes it possible for \textsc{Find-Witness} to detect violations directly from observed iterates. Under this assumption, the algorithm runs accelerated steps via \textsc{AGD-Step} while \textsc{Certify-Progress} monitors for violations of \eqref{eq:pathwise-sc}. If a violation is detected, a witness pair $(u,v)$ is returned; if $f(y_t)>f(y_0)$, it returns \textsc{Restart}; otherwise, it returns \textsc{Null}. In effect, the witness pair provides a concrete certificate that the function violates local convexity, which the outer loop can then exploit.

\paragraph{\textsc{AGD-Step}.} Algorithm~\ref{alg:agd-step} estimates $L_1$ via Adaptive Backtracking (Algorithm~\ref{alg:ABLS}), updates the momentum parameter $\omega$, and takes a Nesterov step to produce $(x_t, y_t)$. It then performs an additional line search on $\zeta_t := y_t - \frac{1}{L^{(t)}_1}\nabla f(y_t)$ to ensure the descent condition
\begin{equation}
    f(w^{\min}_t) \;\leq\; f(\zeta_t) \;\leq\; f(y_t) - \tfrac{1}{2L_1^{(t)}}\|\nabla f(y_t)\|^2
    \label{eq:descent}
\end{equation}
holds; if violated, $L^{(t)}_1$ is multiplied by $\gamma > 1$ until the condition is satisfied (guaranteed whenever $L_1^{(t)} \geq L_1$). The condition is sufficient to give us the following progress bounds. 

\paragraph{Lyapunov Analysis.}
Our inner-loop progress bounds use the following Lyapunov potential
\begin{equation}
    V^{(w)}_t(y_k, x_k) := f(y_k) - f(w) + \tfrac{\sigma}{2}\bigl\|z_t(y_k,x_k) - w\bigr\|^2,
    \quad z_t(y,x) := x + \sqrt{Q_{t-1}}\,(x - y),
    \label{eq:lyapunov}
\end{equation}
where $w \in \mathbb{R}^d$ is a reference point and $Q_t := L_1^{(t)}/\sigma$. In \textsc{Modified-AGD}, the reference point is chosen adaptively from the iterates,
either $y_0$ in line~1 of \textsc{Certify-Progress} or $w^{\min}_t$ in line~6.
For a general reference point satisfying assumption~\eqref{eq:pathwise-sc}, the recursion acquires an additive slack:
\begin{equation}
    V^{(w)}_{k+1}(y_k, x_k)
    \;\leq\;
    \tfrac{Q_k^{3/2}}{Q_{k-1}^{3/2}}\,V^{(w)}_k(y_k, x_k)
    \;+\;
    \tfrac{Q_k^{3/2}}{\sigma\, Q_{k-1}^{3/2}}\,\bigl\|\nabla f(w)\bigr\|^2.
    \label{eq:lyapunov-slack}
\end{equation}

\begin{figure}[t]
\noindent
\begin{minipage}[t]{0.49\textwidth}
\begin{algorithm}[H]
\caption{\small Adaptive Line Search \cite{cavalcanti2025adaptive}}
\label{alg:ABLS}
\begin{algorithmic}[1]
\small
  \Require previous step size $\alpha_k > 0$, violation map $v : \mathbb{R}_+ \to \mathbb{R}$, adaptive factor $\hat{\rho} : \mathbb{R} \to (0,1)$
  \While{$v(\alpha_k) < 1$}
    \State $\alpha_k \leftarrow \hat{\rho}\bigl(v(\alpha_k)\bigr)\,\alpha_k$
  \EndWhile
  \State \textbf{output} $L^{(k)}_1 \gets 1/\alpha_k$
\end{algorithmic}
\end{algorithm}
\vspace{-13pt}
\begin{algorithm}[H]
\caption{
  \small\textcolor{algoblue}{\textsc{Modified-AGD}}%
  $(f,y_0,\varepsilon,L_1^{(0)},\sigma,\gamma)$}
\label{alg:agd-until-guilty}
\begin{algorithmic}[1]
\small
\State \textbf{Global} sequences $\{L^{(t)}_1\}_{t\ge 0}$
       with $L^{(0)}_1>\sigma>0$
\State $x_0 \gets y_0$,\;
       $w^{\min}_0 \gets y_0$,\;
       $Q_0 \gets L_1^{(0)}/\sigma$,\;
       $m \gets 0$
\For{$t = 1, 2, \ldots$}
  \State \parbox[t]{\linewidth}{%
    $(x_t, y_t, \zeta_t, Q_t^{\mathrm{agd}}) \gets $\\[1pt]
    \hspace{1.5em}$\textcolor{algoblue}{\textsc{AGD-Step}}(f,x_{t-1},y_{t-1},L_1^{(t-1)},\sigma,\gamma)$
  }\label{line:agd-step}
  \If{$Q_t^{\mathrm{agd}} > Q_{t-1}$} $m \gets m + 1$ \EndIf
  \State $z \gets x_t + {\scriptstyle \sqrt{Q^{\mathrm{agd}}_t}}\,(x_t - y_t)$
  \State $w^{\min}_t \gets
         \arg\min\bigl\{\,f(w^{\min}_{t-1}),\;f(\zeta_t)\bigr\}$
  \State \parbox[t]{\linewidth}{%
    $w_t \gets \textcolor{algoblue}{\textsc{Certify-Progress}}$\\[1pt]
    \hspace{1.5em}$(f,y_0,y_t,\sigma,Q_t^{\mathrm{agd}},t,m,w^{\min}_t)$, $Q_t \gets Q_t^{\mathrm{agd}}$
  }
  \If{$w_t = \textsc{Restart}$}
    \State \parbox[t]{\linewidth}{%
      $(x_t,\,y_t,\,\zeta_t,\,Q_t) \gets
       \textcolor{algoblue}{\textsc{Restart-Handler}}$\\[1pt]
      \hspace{1.5em}$(f,y_{t-1},z,Q_t^{\mathrm{agd}},L_1^{(t)},\sigma,\gamma)$%
    }\label{line:restart}
    \If{$Q_t > Q_t^{\mathrm{agd}}$} $m \gets m + 1$ \EndIf
    \State $w^{\min}_t \gets
           \arg\min\bigl\{\,f(w^{\min}_t),\;f(\zeta_t)\bigr\}$
  \EndIf
  \If{$w_t \notin \{\textsc{Null},\,\textsc{Restart}\}$}
    \State \parbox[t]{\linewidth}{
      $(u,v)\! \gets \!\textcolor{algoblue}{\textsc{Find-Witness}}(f,x_0^{t},y_0^{t},w_t,\sigma)$
    }
    \State \textbf{return} $(x_0^{t},y_0^{t},u,v)$
  \EndIf
  \If{$\|\nabla f(y_t)\|\le \varepsilon$} \textbf{return} $(x_0^{t},y_0^{t},\textsc{Null})$ \EndIf
\EndFor
\end{algorithmic}
\end{algorithm}
\vspace{-13pt}
\begin{algorithm}[H]
\caption{
  \small\textcolor{algoblue}{\textsc{Find-Witness}}$(f,x_0^{t},y_0^{t},w_t,\sigma)$}
\label{alg:find-witness}
\begin{algorithmic}[1]
\small
\For{$j=0,1,\dots,t-1$}
\For{$(u,v) \in \left\{\substack{
(y_j,x_j), (w_t,x_j),\\
(y_j,w_t), (w_t,y_j)
}\right\}$}
\If{eq.\ \eqref{eq:pathwise-sc} does not hold for $(u,v)$}
\State \textbf{return} $(u,v)$
\EndIf
\EndFor
\EndFor
\State (by Corollary~\ref{cor:modified_cor1_main} this line is never reached)
\end{algorithmic}
\end{algorithm}
\end{minipage}%
\hfill
\begin{minipage}[t]{0.49\textwidth}
\begin{algorithm}[H]
\caption{
  \small\textcolor{algoblue}{\textsc{AGD-Step}}
  $(f,x_{t-1},y_{t-1},L_1,\sigma,\gamma)$}
\label{alg:agd-step}
\begin{algorithmic}[1]
\small
\Repeat
  \State Adaptive backtracking for $L_1$
         via Algorithm~\ref{alg:ABLS}
  \State $Q\!\gets\!\frac{L_1}{\sigma}$,
         $\omega\! \gets\! \tfrac{\sqrt{Q}-1}{\sqrt{Q}+1}$;
         $y_t\! \gets\! x_{t-1}\! -\! \tfrac{\nabla f(x_{t-1})}{L_1}$
  \State $x_t\!\gets\!y_t + \omega\,(y_t - y_{t-1})$;
         $\zeta_t\!\gets\!y_t\!-\!\tfrac{1}{L_1}\nabla f(y_t)$
  \If{$f(\zeta_t) > f(y_t) - \tfrac{1}{2L_1}\,\|\nabla f(y_t)\|^{2}$}
    \State $L_1 \gets \gamma\, L_1$
  \EndIf
\Until{$f(\zeta_t) \le f(y_t) - \tfrac{1}{2L_1}\,\|\nabla f(y_t)\|^{2}$}
\State \parbox[t]{\linewidth}{
  $L_1^{(t)} \gets L_1$,\quad $Q_t \gets L_1/\sigma$
}
\State \Return $(x_t, y_t, \zeta_t, Q_t)$
\end{algorithmic}
\end{algorithm}
\vspace{-16pt}
\begin{algorithm}[H]
\caption{\mbox{}\\
  \small\textcolor{algoblue}{\textsc{Certify-Progress}}%
  $(f,y_0,y_t,\sigma,Q_t,t,m,w^{\min}_t)$}
\label{alg:certify-progress}
\begin{algorithmic}[1]
\small
\If{$f(y_t)\! > \!f(y_0)
    \!+\!\tfrac{2Q_t^{2}}{\sigma}\!\|\nabla f(y_0)\|^{2}$} \Return $y_0$
\EndIf
\If{$f(y_t) > f(y_0)$} \Return \textsc{Restart} \EndIf
\State $\psi(w^{\min}_t) \gets
    f(y_0) - f(w^{\min}_t)
    + \tfrac{\sigma}{2}\,\|w^{\min}_t - y_0\|^{2}$
\If{$\tfrac{\|\nabla f(y_t)\|^{2}}{2L_1^{(t)}}
    >
    (3Q_t)^{m}Q_t^{3/2}\psi(w^{\min}_t)
    e^{-t/\sqrt{Q_t}}$} \Return $w^{\min}_t$
\EndIf
\State \Return \textsc{Null}
\end{algorithmic}
\end{algorithm}
\vspace{-16pt}
\begin{algorithm}[H]
\caption{\mbox{}\\
  \small\textcolor{algoblue}{\textsc{Restart-Handler}}%
  $(f,y_{t-1},z,Q_t^{\mathrm{agd}},L_1,\sigma,\gamma)$}
\label{alg:restart-handler}
\begin{algorithmic}[1]
\small
\State $y_t \gets y_{t-1} - \tfrac{1}{L_1}\,\nabla f(y_{t-1})$
\While{$f(y_t) > f(y_{t-1})-\frac{1}{2L_1}\|\nabla f(y_{t-1})\|^{2}$}
  \State $L_1 \gets \gamma L_1;\quad
         y_t \gets y_{t-1} - \frac{1}{L_1}\,\nabla f(y_{t-1})$
\EndWhile
\State \parbox[t]{\linewidth}{%
  $x_t \gets (z + {\scriptstyle \sqrt{Q^{\mathrm{agd}}_t}}\, y_t)/(1 + {\scriptstyle \sqrt{Q^{\mathrm{agd}}_t}})$
}
\Repeat
  \State $\zeta_t \gets y_t - \tfrac{1}{L_1}\,\nabla f(y_t)$
  \If{$f(\zeta_t) > f(y_t) - \tfrac{1}{2L_1}\,\|\nabla f(y_t)\|^{2}$}
    \State $L_1 \gets \gamma\, L_1$
  \EndIf
\Until{$f(\zeta_t) \le f(y_t) - \tfrac{1}{2L_1}\,\|\nabla f(y_t)\|^{2}$}
\State \parbox[t]{\linewidth}{%
  $L_1^{(t)} \gets L_1$, $Q_t \gets L_1/\sigma$
}
\State \Return $(x_t, y_t, \zeta_t, Q_t)$
\end{algorithmic}
\end{algorithm}
\end{minipage}
\end{figure}

\paragraph{Technical contributions.} The Lyapunov analysis of \citet{cavalcanti2025adaptive} establishes the recursion only for $w = x^*$, where $\nabla f(w) = 0$ eliminates the gradient slack and $f(y_k) - f(x^*) \geq 0$ holds automatically. Both properties fail in our setting, since \textsc{Certify-Progress} must check progress against observable iterates ($y_0$ and $w^{\min}_t$) rather than against an unknown minimizer. We therefore extend the analysis along two tracks. For an \emph{arbitrary} reference point $w$ (Appendix~\ref{sec:lyap-arbitrary}), the recursion acquires an additive $\|\nabla f(w)\|^2$ slack as in~\eqref{eq:lyapunov-slack}; a concrete counterexample (Appendix~\ref{app:counter_example}) shows this slack cannot be removed in general. For a \emph{descent} reference point $w$ satisfying~\eqref{eq:descent}, the slack vanishes at the cost of a $(1 + 2\Delta_k)$ factor (Appendix~\ref{sec:lyap-descent}) whose product across iterations contributes the $(3Q_t)^m$ term in~\eqref{eq:progress_bound2}. This necessitates the introduction of $w^{\min}_t$, which satisfies~\eqref{eq:descent} by construction and enables the tighter bound. A second problem is that $f(y_t) > f(y_0)$ can occur during accelerated steps and break the monotonicity requirement for Corollary~\ref{cor:modified_cor1_main} \eqref{eq:iterate-bound}. We introduce a restart in this case. Naively overwriting $y_t$ destroys the Lyapunov chain because the auxiliary point $z_t(y_t, x_t)$ jumps. We resolve this with a momentum correction (Algorithm~\ref{alg:restart-handler}, line~4) chosen precisely so that $z_t(y_t, x_t) = z$ remains invariant before and after the restart, preserving the telescoping structure and the accelerated rate. Chaining these recursions inductively while accounting for possible restarts yields the following proposition; see Appendix~\ref{sec:partial-momentum} for the full proof.

\begin{proposition}[Progress bounds] \label{prop:progress}
Let $f:\mathbb{R}^d\to\mathbb{R}$ be $L_1$-smooth. Fix $w\in\mathbb{R}^d$ and assume $f$ is pathwise strongly convex. Given Algorithm~\ref{alg:agd-until-guilty}, $x_0=y_0$ and $L_1^{(0)}>\sigma$, with $c\in[1/2,1)$ and restarts handled by Algorithm~\ref{alg:restart-handler}, with $\psi(w)=f(y_0)-f(w)+\frac{\sigma}{2}\|w-y_0\|^2$:
\begin{equation}\label{eq:progress_bound1}
  f(y_t)-f(w)\le e^{-t/\sqrt{Q_t}}Q^{3/2}_t\psi(w)+\tfrac{2Q_{t}^2}{\sigma}\|\nabla f(w)\|^2,
\end{equation}
where $Q_t$ is the condition number at iteration~$t$. If additionally $w$ satisfies~\eqref{eq:descent},
\begin{equation}\label{eq:progress_bound2}
  f(y_t)-f(w) \le e^{-t/\sqrt{Q_t}}\;(3Q_t)^m \tfrac{Q_t^{3/2}}{Q_0^{3/2}}\;\psi(w),
\end{equation}
where $m\le\lfloor\log_{\min(\gamma, 1/\rho)}(L_1/L_1^{(0)})\rfloor+1$ counts the total number of condition-number increases.
\end{proposition}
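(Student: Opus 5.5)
The plan is to unroll the one-step Lyapunov recursions stated before the proposition, \eqref{eq:lyapunov-slack} for a general reference point and its descent-point analogue with factor $(1+2\Delta_k)$, over $t$ iterations, treating restarts as steps that leave the potential controlled. There are three stages: initialize the potential, iterate the recursion while tracking how the condition number and the backtracking ratios multiply, and convert the final potential into a function-value bound.

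\emph{Step 1 (initialization).} Since $x_0=y_0$ gives $z_0=y_0$, we have $V_0^{(w)}(y_0,x_0)=f(y_0)-f(w)+\tfrac{\sigma}{2}\|y_0-w\|^2=\psi(w)$. The one-step contraction of the accelerated scheme, derived from the pathwise strong convexity inequalities for the pairs $(y_s,x_s),(w,x_s)$ and the Armijo-type acceptance with $c\ge 1/2$, gives a factor $(1-1/\sqrt{Q_k})$. I will carry this factor alongside the ratio $Q_k^{3/2}/Q_{k-1}^{3/2}$ from \eqref{eq:lyapunov-slack}. Multiplying over $k=1,\dots,t$, the ratios telescope to $Q_t^{3/2}/Q_0^{3/2}\le Q_t^{3/2}$, because $Q_0>1$. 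The contraction product is bounded by $\prod_k(1-1/\sqrt{Q_k})\le e^{-\sum_k 1/\sqrt{Q_k}}\le e^{-t/\sqrt{Q_t}}$. This last step needs $Q_k\le Q_t$, i.e.\ monotonicity of the estimates $L_1^{(k)}$, which holds because the backtracking only ever increases them, by $\gamma$ or $1/\hat\rho$.

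\emph{Step 2 (general $w$, bound \eqref{eq:progress_bound1}).} Unrolling \eqref{eq:lyapunov-slack} produces an additive slack sum $\sum_k \bigl(\prod_{j>k}\text{factors}\bigr)\tfrac{Q_k^{3/2}}{\sigma Q_{k-1}^{3/2}}\|\nabla f(w)\|^2$. I will bound this geometric-type sum by $\sqrt{Q_t}\cdot Q_t^{3/2}/\sigma$ times $\|\nabla f(w)\|^2$, which is at most $\tfrac{2Q_t^2}{\sigma}\|\nabla f(w)\|^2$: the sum of $(1-1/\sqrt{Q})^{j}$ is at most $\sqrt{Q}$, with a constant $2$ to absorb the varying $Q$. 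Finally $f(y_t)-f(w)\le V_t^{(w)}$, since the quadratic term is nonnegative.

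\emph{Step 3 (descent $w$, bound \eqref{eq:progress_bound2}).} Here I use the descent-point recursion, which has no slack but carries a factor $(1+2\Delta_k)$. At iterations where $Q$ does not increase, $\Delta_k=0$. At each of the $m$ increases, I will bound $1+2\Delta_k\le 3Q_t$, giving the $(3Q_t)^m$ factor. The bound on $m$ follows because each increase multiplies $L_1^{(k)}$ by at least $\min(\gamma,1/\rho)$, while $L_1^{(k)}$ remains bounded by a constant multiple of $L_1$ (the bound stated after the adaptive factor $\hat\rho$).

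\emph{Restarts, the main obstacle.} When \textsc{Restart-Handler} fires, $y_t$ is replaced by a gradient step from $y_{t-1}$, and $x_t$ is reset so that $z_t(y_t,x_t)=z$ is unchanged. So the quadratic part of $V$ is untouched. The function part changes from $f(y_t^{\rm agd})-f(w)$ to $f(y_t^{\rm new})-f(w)\le f(y_{t-1})-f(w)$. I must show that this does not exceed the value the recursion would have allowed. The first attempt is to show that $V$ after the restart is at most $V_{t-1}$ (suitably rescaled by the $Q$ ratio), so the restart costs one contraction factor at most and is absorbed by a slightly worse constant. If the $Q$ estimate also increases during the restart, that event is counted in $m$ and handled as in Step 3. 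Verifying that the rescaling ratio $Q_t/Q_t^{\rm agd}$ fits inside $3Q_t$ is the delicate point.
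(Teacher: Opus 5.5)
Your handling of non-restarted iterations matches the paper's argument: the AGD contraction $1-1/\sqrt{Q_k}$, the index change \eqref{eq:lyapunov-slack} or its descent analogue, telescoping $Q$-ratios, $e^{-t/\sqrt{Q_t}}$ via monotonicity, and the $(3Q_t)^m$ factor. The gap is the restart, which is exactly what separates this statement from the unrestarted bounds.

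You propose to show that the post-restart potential is at most $V_{t-1}$ (rescaled), so that a restart ``costs one contraction factor'' that is ``absorbed by a slightly worse constant.'' This fails. The number $b$ of restarts is not bounded by a constant (only $b\le t$), and restarts are not counted in $m$. Dropping a factor $1-1/\sqrt{Q_t}$ at every restart therefore leaves $(1-1/\sqrt{Q_t})^{t-b}$, which can be close to $1$, and the geometric slack sum in your Step~2 loses its $\sqrt{Q_t}$ bound for the same reason. Even the weaker comparison with $V_{t-1}$ is not directly available. The quadratic part uses the post-AGD auxiliary point $z$, which is controlled only through the AGD contraction, and $f(y_t)\le f(y_{t-1})$ alone does not close the resulting inequality.

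The missing observation is Lemma~\ref{lem:V-decrease}. A restart fires exactly when $f(y_t^{\mathrm{agd}})>f(y_0)$, while the new point satisfies $f(y_t)\le f(y_{t-1})\le f(y_0)$, because iteration $t-1$ did not restart. Let $\hat V_{t+1}$ be the potential whose auxiliary point uses $Q_t^{\mathrm{agd}}$. Since $z$ is invariant, this potential strictly decreases across the restart: $\hat V_{t+1}(y_t,x_t)<\hat V_{t+1}(y_t^{\mathrm{agd}},x_t^{\mathrm{agd}})\le (1-1/\sqrt{Q_t^{\mathrm{agd}}})\,\hat V_{t+1}(y_{t-1},x_{t-1})$. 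So the restarted iteration keeps its full contraction.

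The only extra cost is two index changes, via Lemma~\ref{lemma-5} (or Lemma~\ref{lemma-6}): $Q_{t-1}\to Q_t^{\mathrm{agd}}$ at the old iterates and $Q_t^{\mathrm{agd}}\to Q_t$ at the new ones. Three consequences follow.
\begin{itemize}
\item The ratios telescope to $Q_t^{3/2}/Q_{t-1}^{3/2}$, so $Q_t/Q_t^{\mathrm{agd}}$ never has to fit inside $3Q_t$.
\item The two slack terms are the actual source of the constant $2$ in \eqref{eq:progress_bound1}. Without restarts the unrolled sum already gives constant $1$, so the $2$ has nothing to do with varying $Q$.
\item In the descent case you get $\bigl(1+2(Q_t-Q_t^{\mathrm{agd}})\bigr)\bigl(1+2(Q_t^{\mathrm{agd}}-Q_{t-1})\bigr)$. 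Each nontrivial factor corresponds to one of the $m$ increases and is bounded by $3Q_t$.
\end{itemize}
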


The presence of restarts affects both bounds slightly: bound (7) acquires a factor of two in the slack term and bound (8) involves $m$. We track this in lines~5 and~11 of \textsc{Modified-AGD}. Next, we look at the key subroutine which detects violations based on the progress bound.

\paragraph{\textsc{Certify-Progress}.} 

Algorithm~\ref{alg:certify-progress} checks whether the iterates maintain sufficient progress under the pathwise strong convexity assumption, using the bounds of Proposition~\ref{prop:progress}. Line~1 checks bound~\eqref{eq:progress_bound1} with $w = y_0$. Line~4 checks bound~\eqref{eq:progress_bound2} with $w = w^{\min}_t$, which satisfies the descent condition~\eqref{eq:descent} by construction. It returns one of three outcomes: \textsc{Null} (no violation detected); a witness $w_t \in \{y_0, w^{\min}_t\}$ whose progress bound has been violated, triggering \textsc{Find-Witness}; or \textsc{Restart}, triggered when $f(y_t) > f(y_0)$, which is required for~\eqref{eq:iterate-bound} to apply.\vspace*{-0.35cm}

\paragraph{\textsc{Restart-Handler}.} A restart is triggered by $f(y_t) > f(y_0)$. Since $f(y_s) \le f(y_0)$ for all $s < t$ (otherwise a restart would have been triggered earlier), the algorithm backtracks to $y_{t-1}$ and takes a steepest descent step with line search. Provided $L_1^{(t)} \ge L_1$, this guarantees $f(y_t) < f(y_{t-1}) \le f(y_0)$, restoring the descent condition. To prevent the restart from breaking the Lyapunov recursion, the Lyapunov point $z$ must be kept invariant: the update $x_t \gets (z + \scriptstyle \sqrt{Q_t^{\text{agd}}} y_t) / (1 + \scriptstyle \sqrt{Q_t^{\text{agd}}})$ in line 4 of Algorithm~\ref{alg:restart-handler} achieves exactly this, ensuring $z_t(y_t, x_t) = z$ regardless of the new iterates.
\vspace*{-0.3cm}

\paragraph{\textsc{Find-Witness}.} 
Given the witness $w_t$ returned by \textsc{Certify-Progress}, \textsc{Find-Witness} iterates over previous indices $j<t$ searching for the exact pair $(u,v)$ violating~\eqref{eq:pathwise-sc}. At each step it checks the four candidate pairs $\{(y_j, x_j), (w_t, x_j), (y_j, w_t), (w_t, y_j)\}$; these are precisely the pairs whose strong convexity is asserted by assumption~\eqref{eq:pathwise-sc}. By Corollary \ref{cor:modified_cor1_main}, the search is guaranteed to succeed before all indices are exhausted, ensuring line 5 is never reached. This result also establishes the inner loop's iteration complexity and provides the non-convexity certificate. 

\begin{corollary}\label{cor:modified_cor1_main}
Let $f:\R^d\to\R$ be $L_1$-smooth, $y_0\in\R^d$, $\varepsilon>0$, and $0<\sigma\le L_1$. 
Let $(x_t,y_t,u,v)=\textcolor{algoblue}{\textsc{Modified-AGD}}(f,y_0,\varepsilon,L_1^{(0)},\sigma,\gamma)$.
Define $Q_t:=\tfrac{L_1^{(t)}}{\sigma}$, $\bar{L}_1:=\max \scriptstyle\left\{L_1^{(0)},\tfrac{L_1}{2(1-c)\rho}\right\}$,
$\bar{Q}:=\tfrac{\bar{L}_1}{\sigma}$. Then the number of AGD steps $t$ satisfies
\begin{equation}\label{eq: t_bound}
    t \;\le\; 1+\max\left\{0,\;
    \sqrt{\bar{Q}}\log\left(
    \tfrac{2L_1^{(t-1)} Q_{t-1}^{3/2}(3Q_{t-1})^m \psi(w_{t-1}^{\min})}{\varepsilon^2}
    \right)\right\},
\end{equation}
where $\psi(w)=f(y_0)-f(w)+\tfrac{\sigma}{2}\|w-y_0\|^2$ and
$m\le\lfloor\log_\gamma(\bar{L}_1/L_1^{(0)})\rfloor+1$.
If $u,v\neq\textsc{Null}$, then
\begin{equation}
    f(u) < f(v)+\langle\nabla f(v),u-v\rangle+\tfrac{\sigma}{2}\|u-v\|^2, \label{eq:sc-check}
\end{equation}
for some $0\le j<t$, where $(u,v)\in\{(y_j,x_j),(w_t,x_j),(y_j,w_t),(w_t,y_j)\}$.
Moreover,
\begin{equation}
    \max\{f(y_1),\dots,f(y_{t-1}),f(u)\}\le f(y_0). \label{eq:iterate-bound}
\end{equation}
\end{corollary}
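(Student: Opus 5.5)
We would prove the three claims of Corollary~\ref{cor:modified_cor1_main} in the order: iterate bound \eqref{eq:iterate-bound}, then the witness property \eqref{eq:sc-check}, then the iteration count \eqref{eq: t_bound}.

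\textbf{Iterate bound.} We argue by induction on the inner loop. If $f(y_s)>f(y_0)$ for some $s$, then line~2 of \textsc{Certify-Progress} returns \textsc{Restart}, unless line~1 has already returned $y_0$. In the restart case, \textsc{Restart-Handler} replaces $y_s$ by a sufficient-decrease step from $y_{s-1}$, so $f(y_s)<f(y_{s-1})\le f(y_0)$. The only exit at which $f(y_t)>f(y_0)$ can persist is the witness exit at $t$ itself, and \eqref{eq:iterate-bound} excludes $y_t$. For $u$, we check that every candidate is either some $y_j$ with $j<t$, or $w_t\in\{y_0,w^{\min}_t\}$. In both cases $f(u)\le f(y_0)$: $w^{\min}_t$ is a minimum over descent points $\zeta_s$, and each satisfies $f(\zeta_s)\le f(y_s)\le f(y_0)$.

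\textbf{Witness property.} We argue by contradiction using Proposition~\ref{prop:progress}. Suppose \textsc{Find-Witness} finds no violating pair for $j<t$. Then $f$ is pathwise $\sigma$-strongly convex with reference $w=w_t$, so both bounds of the proposition hold at step $t$.
\begin{itemize}
\item If $w_t=y_0$, bound \eqref{eq:progress_bound1} with $\psi(y_0)=0$ gives $f(y_t)-f(y_0)\le \tfrac{2Q_t^2}{\sigma}\|\nabla f(y_0)\|^2$. This contradicts line~1.
\item If $w_t=w^{\min}_t$, we combine \eqref{eq:progress_bound2} with the descent inequality $f(y_t)-f(w^{\min}_t)\ge f(y_t)-f(\zeta_t)\ge \|\nabla f(y_t)\|^2/(2L_1^{(t)})$. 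This yields $\|\nabla f(y_t)\|^2/(2L_1^{(t)})\le e^{-t/\sqrt{Q_t}}(3Q_t)^mQ_t^{3/2}\psi(w^{\min}_t)$, using $Q_0\ge1$. This contradicts line~4.
\end{itemize}
Hence some pair among the four types violates \eqref{eq:pathwise-sc}, which is exactly \eqref{eq:sc-check}.

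\textbf{Iteration count.} Consider step $t-1$. The loop did not terminate there, so $\|\nabla f(y_{t-1})\|>\varepsilon$, and \textsc{Certify-Progress} returned \textsc{Null} or \textsc{Restart}. In particular, the line~4 test failed or was bypassed. We restrict to the non-restart case first, and then argue that after a restart the check is still performed with the updated $w^{\min}$, or else that restarts do not reset the count. In that case
\[
\varepsilon^2<\|\nabla f(y_{t-1})\|^2\le 2L_1^{(t-1)}(3Q_{t-1})^mQ_{t-1}^{3/2}\psi(w^{\min}_{t-1})e^{-(t-1)/\sqrt{Q_{t-1}}}.
\]
Taking logarithms and using $Q_{t-1}\le\bar Q$ gives \eqref{eq: t_bound}. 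The bound $Q_{t-1}\le\bar Q$ comes from the adaptive backtracking cap $L_1^{(k)}\le\bar L_1$; for the $\gamma$-inflations we also need $L_1^{(k)}\ge L_1$ to stop them, and we take the cap as given. The $\max\{0,\cdot\}$ covers $t=1$. The count of $m$ follows because each increase multiplies $L_1$ by at least $\gamma$ and $L_1$ stays below $\bar L_1$.

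\textbf{Main obstacle.} We expect the main difficulty to be the interaction with restarts in the count step. The line~4 test at a restart step is skipped, so we must ensure there is some earlier non-restart step, or re-apply the argument at the last non-restart index. We would try to show that consecutive restarts still leave $\|\nabla f(y_{t-1})\|>\varepsilon$ and that Proposition~\ref{prop:progress}'s bound, which already accounts for restarts, applies to $y_{t-1}$ directly.
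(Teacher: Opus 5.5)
Your arguments for all three claims follow the paper's proof:
\begin{itemize}
\item the witness property by contradiction with the two bounds of Proposition~\ref{prop:progress} (line~1 with $\psi(y_0)=0$; line~4 combined with $f(w^{\min}_t)\le f(\zeta_t)\le f(y_t)-\|\nabla f(y_t)\|^2/(2L_1^{(t)})$ and $Q_0\ge 1$);
\item the iterate bound from the line-2 test, where you are more careful than the paper by also using the restart descent step $f(y_s)\le f(y_{s-1})$ at restarted indices;
\item the count from non-termination at step $t-1$.
\end{itemize}

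\textbf{The main gap.} It is the one you flag, and the paper's proof does not close it either. The paper simply asserts that neither line~16 nor line~4 of \textsc{Certify-Progress} held at iteration $t-1$, which is only your non-restart case. Your proposed ways out do not work as stated.
\begin{itemize}
\item \textsc{Certify-Progress} is called once per iteration, before \textsc{Restart-Handler}, and it returns at line~2. So at a restart step no line-4 inequality is ever established for the post-restart $y_{t-1}$, $w^{\min}_{t-1}$, $L_1^{(t-1)}$. Line~16 is evaluated on the new point and gives $\|\nabla f(y_{t-1})\|>\varepsilon$ for free, but nothing bounds $\|\nabla f(y_{t-1})\|$ from above.
\item Invoking Proposition~\ref{prop:progress} directly at $y_{t-1}$ presupposes a pathwise strong convexity that no test has certified at that step.
\item Retreating to the last non-restart index $s$ bounds $s$, but leaves the trailing run of restarts $s+1,\dots,t-1$ uncontrolled.
\item Swapping lines~2 and~4, as the paper's later remark allows, would not help: the line-4 test would then concern the pre-restart point, while line~16 concerns the post-restart one.
\end{itemize}
So, like the paper's, your argument proves the count only when step $t-1$ is not a restart. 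Closing it needs an extra ingredient, e.g.\ re-running the line-4 test after \textsc{Restart-Handler}, or a separate bound on consecutive restarts.

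\textbf{Smaller points.}
\begin{itemize}
\item Your justification of the bound on $m$ (``each increase multiplies $L_1$ by at least $\gamma$'') fails for increases made by the adaptive line search. Those guarantee only a factor $1/\rho$. This is why Proposition~\ref{prop:progress} and the appendix version use base $\min(\gamma,1/\rho)$. For $\rho=0.8$, $\gamma=2$, the base-$\gamma$ bound does not follow from your argument.
\item Your hesitation about the cap is justified. A $\gamma$-inflation applied to an estimate below $L_1$ can produce values up to $\gamma L_1$, which can exceed $L_1/(2(1-c)\rho)$ (e.g.\ $c=1/2$, $\rho=0.8$, $\gamma=2$). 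So $Q_{t-1}\le\bar Q$ requires adding $\gamma L_1$ to $\bar L_1$, as the paper itself does in Section~\ref{subsec:inner}.
\item Not every $\zeta_s$ entering $w^{\min}$ satisfies $f(\zeta_s)\le f(y_0)$. At a restart step the pre-restart $\zeta_s$ is inserted at line~7 with only $f(\zeta_s)\le f(y_s^{\mathrm{agd}})$, and $f(y_s^{\mathrm{agd}})>f(y_0)$. The conclusion still holds by the paper's simpler argument: the running minimum starts at $w^{\min}_0=y_0$ and is nonincreasing.
\end{itemize}
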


\begin{proof}
The $\log$ factor in the bound comes from \eqref{eq: t_bound}. For $t=1$, the iteration bound \eqref{eq: t_bound} is immediate. For $t > 1$, since the algorithm did not terminate at iteration $t-1$, neither the line 16 condition of \textsc{Modified-AGD} nor the line 4 condition of \textsc{Certify-Progress} held. Thus $\varepsilon^2 < \|\nabla f(y_{t-1})\|^2\leq 2L^{(t-1)}_1Q_{t-1}^{3/2}{(3Q_{t-1})^m}\,\psi({w^{\min}_{t-1}})\,e^{-(t-1)/\sqrt{Q_{t-1}}}$, which gives \eqref{eq: t_bound} when rearranged. Proofs of \eqref{eq:sc-check} and \eqref{eq:iterate-bound} are deferred to Appendix~\ref{sec:inner_iters}. 
\end{proof}

\begin{algorithm}[t]
\caption{\small\textcolor{algoblue}{\textsc{PF-AGD}}$(f, p_0, L^{(0)}_1, M_0, \gamma, \epsilon)$}
\label{alg:PF-AGD}
\begin{algorithmic}[1]
\small
\For{$k = 1, 2, \dots$}
    \State $M_k \gets M_{k-1}$
    \While{\textbf{true}}
        \State $\alpha \gets 2M_k^{1/3}\epsilon^{2/3}$,\ $\tau \gets \sqrt{\alpha/(32M_k)}$,\ $\eta \gets \sqrt{2\alpha/M_k}$;\ $\hat{f}(x) := f(x) + \alpha\|x-p_{k-1}\|^2$
        \State $(x_0^t, y_0^t, u, v) \gets \textcolor{algoblue}{\textsc{Modified-AGD}}(\hat{f}, p_{k-1}, \epsilon/10, L^{(k-1)}_1, \alpha, \gamma)$
        \If{$(u, v) = \textsc{Null}$} \Comment{$\hat f$ effectively\ str.\ convex}
            \State $p_k \gets y_t$;\ \textbf{break}
        \EndIf
        \State $b^{(1)} \gets \textcolor{algoblue}{\textsc{Find-Best-Iterate}_3}(f, y_0^t, u, v)$;\ $b^{(2)} \gets \textcolor{algoblue}{\textsc{Exploit-NC-Pair}_3}(f, u, v, \eta)$
        \If{$f(b^{(1)}) \le f(y_0) - \alpha\tau^2$} \Comment{proxy check}
            \State $p_k \gets b^{(1)}$;\ \textbf{break}
        \ElsIf{$f(b^{(2)}) > \max\{f(v) - \tfrac{\alpha\eta^2}{4},\, f(u) - \tfrac{\alpha\eta^2}{12}\}$ \textbf{or} $f(v) > f(y_0) + 14\alpha\tau^2$}
            \State $M_k \gets \gamma M_k$
        \Else
            \State $p_k \gets \arg\min_{z \in \{b^{(1)}, b^{(2)}\}} f(z)$;\ \textbf{break}
        \EndIf
    \EndWhile
    \If{$\|\nabla f(p_k)\| \le \epsilon$} \Return $p_k$ \EndIf
\EndFor
\end{algorithmic}
\end{algorithm}

\begin{figure}[t]
\vspace{-16pt}
\noindent
\begin{minipage}[t]{0.49\textwidth}
\begin{algorithm}[H]
\caption{\small\textcolor{algoblue}{\textsc{Find-Best-Iterate}$_3$}%
  $(f, y_0^t, u, v)$}
\begin{algorithmic}[1]
\small
\State Let $0 \le j < t$ be such that $v = x_j$
\vspace{0.09em}
\State $c_j \leftarrow (y_j + y_{j-1})/2
       \quad \textbf{if } j > 0 \textbf{ else } y_0$
\vspace{0.08em}
\State $q_j \leftarrow -2y_j + 3y_{j-1}
       \quad \textbf{if } j > 0 \textbf{ else } y_0$
\State \textbf{return}
       $\arg\min_{z \in \{y_0, \dots, y_t, c_j, q_j, u\}} f(z)$
       
\end{algorithmic}
\end{algorithm}
\end{minipage}%
\hfill
\begin{minipage}[t]{0.495\textwidth}
\begin{algorithm}[H]
\caption{\small\textcolor{algoblue}{\textsc{Exploit-NC-Pair}$_3$}%
  $(f, u, v, \eta)$}
\begin{algorithmic}[1]
\small
\State $\delta \leftarrow (u - v) / \|u - v\|$
\State $\eta' \leftarrow \sqrt{\eta(\eta + \|u - v\|)} - \|u - v\|$
\State $u_+ \leftarrow u + \eta' \delta,\quad v_- \leftarrow v - \eta \delta$
\State \textbf{return} $\arg\min_{z\in\{v_-,\,u_+\}} f(z)$
\end{algorithmic}
\end{algorithm}
\end{minipage}
\end{figure}

\subsection{The Outer Loop}
\label{subsec:outer_loop}
Algorithm \ref{alg:PF-AGD} is the main method, building on the parameter-dependent \textsc{AGD-Until-Guilty} \cite{pmlr-v70-carmon17a} but unprecedentedly, estimating the third-order Lipschitz constant $L_3$. At each outer iteration $k$, we maintain an estimate $M_k$ and solve a regularized objective $\hat{f}(x) := f(x) + \alpha(M_k)\|x - p_{k-1}\|^2$ using the \textsc{Modified-AGD} inner loop to accuracy $\varepsilon=\epsilon/10$. If the inner loop returns $\|\nabla \hat{f}(y_t)\| \le \varepsilon$ with no negative curvature ($u, v = \text{NULL}$), $\hat{f}$ is treated as convex and the outer iterate advances. Otherwise, the certificate $(u,v)$ is used to assess whether $M_k$ is underestimating $L_3$ or whether negative curvature can be exploited to obtain sufficient progress. The algorithm computes two candidate points: $b^{(1)} = \textcolor{algoblue}{\textsc{Find-Best-Iterate}_3}(f, y_0^{t}, u, v)$ and $b^{(2)} = \textcolor{algoblue}{\textsc{Exploit-NC-Pair}_3}(f, u, v, \eta)$. When line~9 is satisfied, we already have sufficient progress (Lemma~\ref{lem:sufficient_progress}). Otherwise, we use the following inequalities as proxies for whether $M_k$ is large enough:
\begin{align}
\label{eq:proxy}
f(v)\le f(y_0)+14\alpha\tau^2 \; \text{and} \;  f(b^{(2)})\le \max\bigl\{f(v)-\tfrac{\alpha\eta^2}{4},f(u)-\tfrac{\alpha\eta^2}{12}\bigr\}.
\end{align}
The key insight is that whenever \eqref{eq:proxy} holds, we again get sufficient progress by Lemma~\ref{lem:sufficient_progress}. If it fails, we conclude that $M_k < L_3$ and update $M_k \gets \gamma M_k$; eventually, once $M_k \geq L_3$, the conditions are guaranteed to hold and the estimate is never increased again. In particular, the following lemma shows that $M_k$ is bounded and we have sufficient function decrease at each outer iterate. Here, $\alpha$ takes $M_k$ as input and we use $\alpha(M_k)$ to denote this dependence.

\begin{lemma}\label{lem:sufficient_progress}
Let $f : \mathbb{R}^d \to \mathbb{R}$ be $L_1$-smooth and have $L_3$-Lipschitz continuous third-order derivatives, let $\epsilon, \alpha, M_0 > 0$, $\gamma>1$ and $p_0 \in \mathbb{R}^d$. If $\{M_k\}_{k \geq 0}$ and $p_0^K$ are the sequences of estimates and iterates produced by \textcolor{algoblue}{\textsc{PF-AGD}}$(f, p_0, L_1^{(0)}, M_0, \gamma, \epsilon)$, then for every $1 \leq k < K$,
\begin{equation*}
f(p_k) \leq f(p_{k-1}) - \min \bigr\{ \tfrac{\epsilon^2}{5\alpha(M_k)}, \tfrac{\alpha(M_k)^2}{32M_k} \bigr\},\quad
    M_k \leq \bar{M} \coloneqq \max\{M_0,\, \gamma L_3\}.
\end{equation*}
In particular, the estimate $M_k$ is increased at most $\mathcal{O}(\log_\gamma(L_3 / M_0))$ times in total.
\end{lemma}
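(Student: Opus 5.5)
Fix an outer iteration $k$ and follow the three ways the inner \textbf{while} loop of \textsc{PF-AGD} can break. We will show that each break yields the stated decrease. Then we will show that, once $M_k \ge L_3$, the two proxy conditions \eqref{eq:proxy} hold automatically. The bound $M_k\le\bar M$ and the count of increases follow from this.

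\emph{Case 1: $(u,v)=\textsc{Null}$.} Corollary~\ref{cor:modified_cor1_main} gives $\|\nabla\hat f(y_t)\|\le\epsilon/10$, and \eqref{eq:iterate-bound} gives $\hat f(y_t)\le \hat f(y_0)=f(p_{k-1})$. We want to turn the small regularized gradient into decrease of $f$. The argument is as in \citet{pmlr-v70-carmon17a}. Since $p_k$ is not returned, $\|\nabla f(p_k)\|>\epsilon$. Write $\nabla f(p_k)=\nabla\hat f(p_k)-2\alpha(p_k-p_{k-1})$. This gives $2\alpha\|p_k-p_{k-1}\|\ge \epsilon-\epsilon/10$, so $\alpha\|p_k-p_{k-1}\|^2\ge (0.9\epsilon)^2/(4\alpha)$. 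Combining with $f(p_k)=\hat f(y_t)-\alpha\|p_k-p_{k-1}\|^2\le f(p_{k-1})-\alpha\|p_k-p_{k-1}\|^2$ yields the $\epsilon^2/(5\alpha)$ decrease, because $0.81/4>1/5$.

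\emph{Case 2: line 9 succeeds.} Here $f(p_k)\le f(p_{k-1})-\alpha\tau^2$, and $\alpha\tau^2=\alpha^2/(32M_k)$ by definition of $\tau$.

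\emph{Case 3: the proxy \eqref{eq:proxy} holds.} Then $f(b^{(2)})\le f(v)-\alpha\eta^2/4$ or $f(b^{(2)})\le f(u)-\alpha\eta^2/12$. We bound $f(v)\le f(y_0)+14\alpha\tau^2$ by the proxy and $f(u)\le f(y_0)$ by \eqref{eq:iterate-bound}. Since $\eta^2=2\alpha/M_k=64\tau^2$, we get $\alpha\eta^2/4=16\alpha\tau^2$ and $\alpha\eta^2/12>5\alpha\tau^2$. Either way $f(p_k)\le f(b^{(2)})\le f(y_0)-\alpha\tau^2$.

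Thus every accepted $p_k$ satisfies the claimed inequality with the current $M_k$.

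\emph{Boundedness of $M_k$.} This is the main step: we must show that if $M_k\ge L_3$ and line 9 fails, then both proxy inequalities hold, so line 12 never increases $M_k$ beyond $\gamma L_3$. I would transcribe the third-order analysis of \citet{pmlr-v70-carmon17a} (their \textsc{Find-Best-Iterate}$_3$ and \textsc{Exploit-NC-Pair}$_3$ lemmas), with $L_3$ replaced by $M_k\ge L_3$, using that $\alpha,\tau,\eta$ are exactly their choices at that constant.

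\emph{First inequality.} We need $f(v)\le f(y_0)+14\alpha\tau^2$, where $v$ is $x_j$, $y_j$ or $w_t$; the latter two are handled by \eqref{eq:iterate-bound}. For $v=x_j$, use the third-order Taylor argument on the line through $y_{j-1},y_j,x_j$ that motivates $c_j,q_j$. The failure of line 9 means $c_j,q_j,y_j$ all exceed $f(y_0)-\alpha\tau^2$. Together with the cubic interpolation error $\le L_3\|\cdot\|^4/24$, this bounds $f(x_j)$. The step-length bound $\|y_j-y_{j-1}\|\le\tau$ also comes from the failure of line 9, via the regularization and \eqref{eq:iterate-bound}.

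\emph{Second inequality.} This is the \textsc{Exploit-NC-Pair}$_3$ guarantee. The violated strong convexity \eqref{eq:sc-check} of $\hat f$ gives negative curvature $-\alpha$ of $f$ along $\delta$ on average. A step of length $\eta=\sqrt{2\alpha/M_k}$ in the direction $\pm\delta$ then decreases $f$ by $\alpha\eta^2/4$ from $v$ (or $\alpha\eta^2/12$ from $u$), since odd-order terms cancel between the two directions and the fourth-order remainder is controlled by $L_3\le M_k$.

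I expect adapting these estimates carefully (constants $14$, $1/4$, $1/12$, and the fact that $u$ may be $w_t$ rather than an AGD iterate) to be the bulk of the work.

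\emph{Counting increases.} Given boundedness, $M_k$ is only multiplied by $\gamma$ while $M_k<L_3$. It starts at $M_0$ and is never reset, because line 2 copies $M_{k-1}$. Hence $M_k\le\max\{M_0,\gamma L_3\}$, and the number of increases is at most $\lceil\log_\gamma(L_3/M_0)\rceil$.
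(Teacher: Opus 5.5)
Your proposal is correct and follows the paper's proof essentially step for step. It uses the same three-way split on how the \textbf{while} loop breaks: the \textsc{Null} case is Carmon et al.'s Lemma~2, which you re-derive rather than cite, and the last case combines the accepted inequalities \eqref{eq:proxy} with $f(u)\le f(y_0)$ exactly as the paper does. It then uses the same boundedness argument, resting on Carmon et al.'s Lemmas~3, 5 and~6 remaining valid with $L_3$ replaced by any $M_k\ge L_3$.

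Minor corrections:
\begin{enumerate}
\item The bound $\hat f(y_t)\le\hat f(y_0)$ for the returned $y_t$ comes from line~2 of \textsc{Certify-Progress} (or the descent step of \textsc{Restart-Handler}), not from \eqref{eq:iterate-bound}, which stops at $y_{t-1}$.
\item Failure of line~9 gives $\|y_i-y_0\|\le\tau$, hence only $\|y_j-y_{j-1}\|\le 2\tau$. This is still enough for the constant $14$.
\item The \textsc{Exploit-NC-Pair}$_3$ estimate also needs the hypothesis $\|u-v\|\le 4\tau\le\eta/2$, which the paper takes from Carmon et al.'s Lemma~3.
\item In your sketch, as in the paper, the collinearity of $y_{j-1},y_j,x_j$ fails at iterations overwritten by \textsc{Restart-Handler}. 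Neither argument treats this case.
\end{enumerate}
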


\section{Global  Rate of Convergence for PF-AGD}\label{sec: convergence_rate}
The total complexity is obtained by combining the number of outer iterations required to reduce the objective value (\S\ref{subsec:outer}), and the number of inner iterations needed to either certify sufficient progress or detect non-convexity (\S\ref{subsec:inner}). We bound these separately and combine them in \S\ref{subsec:main_bound}. Although the restart overwrites the previous iterate, it suffices to bound the total number of iterations $T$. The total number of gradient evaluations in one epoch of \textsc{Modified-AGD} with~$T$ iterations and~$b$ restarts is at most $G \le 2T + 2b + 2\bigl\lceil\log_\gamma(\bar{L}_1/L_1^{(0)})\bigr\rceil$. Since $b\le T$, this gives $G\le 4T + O(\log(\bar{L}_1/L_1^{(0)}))$.

\subsection{Bounding the Outer Iterations}\label{subsec:outer}
With the boundedness of $M_k$ in mind, we split our analysis into cases. If $M_0 \leq \gamma L_3$, we have
\begin{equation}\label{ineq1}
f(p_k) \leq f(p_{k-1}) - \min \bigl\{ \tfrac{\epsilon^2}{5\alpha(M_k)}, \tfrac{\alpha(M_k)^2}{32M_k} \bigr\} \leq f(p_{k-1}) - \min \bigl\{ \tfrac{\epsilon^2}{5\gamma^{1/3}\alpha(L_3)}, \tfrac{\alpha(L_3)^2}{32\gamma^{1/3} L_3} \bigr\}. 
\end{equation}

Otherwise, $M_k=M_0$ and we have
\begin{equation}\label{ineq2}
    f(p_k) \leq f(p_{k-1}) - \min \bigl\{ \tfrac{\epsilon^2}{5\alpha(M_0)}, \tfrac{\alpha(M_0)^2}{32M_0} \bigr\} = f(p_{k-1}) - \tfrac{\epsilon^{4/3}}{10M_0^{1/3}}.
\end{equation}

With these new progress bounds we can derive the upper bound $K$ of the number of iterations of \textsc{PF-AGD} by telescoping \eqref{ineq1}. Let $p_0 \in \mathbb{R}^d$, $\Delta_f = f(p_0) - \inf_{z \in \mathbb{R}^d} f(z)$, then $\Delta_f \ge  \sum_{k=1}^{K-1} \left( f(p_{k-1}) - f(p_k) \right) \ge (K-1) \gamma^{-1/3}\cdot \min \bigl\{ \tfrac{\epsilon^2}{5\alpha(L_3)}, \tfrac{\alpha(L_3)^2}{32L_3} \bigr\} \ge (K-1) \tfrac{\epsilon^{4/3}}{10\gamma^{1/3}L_3^{1/3}}$. 

In the case of \eqref{ineq2}, $\Delta_f \ge \sum_{k=1}^{K-1} \left( f(p_{k-1}) - f(p_k) \right)  \ge (K-1) \tfrac{\epsilon^{4/3}}{10M_0^{1/3}}$. We conclude that $K\leq 1+10\epsilon^{-4/3}\Delta_f\bar M^{1/3}$, where $\bar{M} := \max\{\gamma L_3,M_0\}$.

\subsection{Bounding the Inner Iterations}\label{subsec:inner}
To bound the number of steps $T$ of \textsc{Modified-AGD}, note that for every $w \in \mathbb{R}^d$
\[
\psi(w) = \hat{f}(y_0) - \hat{f}(w) + \tfrac{\alpha(M_k)}{2}\|w - y_0\|^2
= f(y_0) - f(w) - \tfrac{\alpha(M_k)}{2}\|w - y_0\|^2
\le \Delta_f .
\]

We take fixed backtracking constants $c=0.5,\rho = 0.8$, and $\gamma=2$ for Algorithm \ref{alg:ABLS}, so that $\frac{1}{2(1-c)\rho} \leq \gamma$, which gives $\bar L = \max\bigl\{L^{(0)},\tfrac{L_1+2\alpha(\bar M)}{2(1-c)\rho}, \gamma(L_1+2\alpha(\bar M)) \bigr\}= \max\bigl\{L^{(0)},2(L_1+2\alpha(\bar M))\bigr\}$ and $\bar Q := \tfrac{\bar L}{\alpha(M_0)}$. Substituting $\varepsilon = \epsilon/10$ and $\sigma = \alpha(M_k) = 2M_k^{1/3}\epsilon^{2/3}$ into Corollary~\ref{cor:modified_cor1_main} \eqref{eq: t_bound} we obtain,
\begin{align*}
T &\leq 1+\sqrt{\max\bigl\{Q_0,4 + \tfrac{L_1}{M_0^{1/3}\epsilon^{2/3}}\bigr\}} \log_+ \bigl( \tfrac{200\bar L\,\bar Q^{3/2}(3\bar Q)^m\Delta_f}{\epsilon^2} \bigr), 
\end{align*}
where $\log_+(\cdot)$ is shorthand for $\max\{0, \log(\cdot)\}$ and $m \leq \lfloor \log_{\min(\gamma, 1/\rho)} ((L_1+2\alpha(\bar M)) / L^{(0)}) \rfloor + 1$.

\subsection{Main Complexity Bound}\label{subsec:main_bound}
We assemble the final bound. The total gradient evaluations decompose as $2KT$, where $K$ is the number of outer iterations of \textsc{PF-AGD} and $T$ is the maximum number of accepted steps in any single call to \textsc{Modified-AGD}. Combining the results of $\S \ref{subsec:inner}$ and $\S \ref{subsec:outer}$ yields our main complexity bound.

\begin{theorem}\label{Thm1} Let $f : \mathbb{R}^d \to \mathbb{R}$ be $L_1$-smooth and have $L_3$-Lipschitz continuous third-order derivatives. Let $0 < \epsilon\le \min \{ \Delta_f^{3/4} \bar M^{1/4}, L_1^{3/2}/ (8 \bar M^{1/3})^{3/2} \}$. If we set $\sigma = \alpha(M_k) = 2 M_k^{1/3} \epsilon^{2/3}$, $\gamma = 2$, \textcolor{algoblue}{\textsc{PF-AGD}}$(f, p_0, L_1^{(0)}, M_0, \gamma, \epsilon)$ finds a point $p_K$ such that $\|\nabla f(p_K)\| \le \epsilon$ and requires at most
\begin{equation*}
27\tfrac{\Delta_fL^{1/2}_1\bar M^{1/3}}{M_0^{1/6}\epsilon^{5/3}}\log \left( \tfrac{1473L^{5/2+m}_1 (9/2)^m\,\Delta_f}{M_0^{1/2+m/3}\epsilon^{3+2m/3}} \right)\quad when \quad L^{(0)}\leq 2(L_1+2\alpha(\bar M))
\end{equation*}
\begin{equation*}
16\tfrac{\Delta_f L^{(0)^{1/2}}\bar M^{1/3}}{M_0^{1/6}\epsilon^{5/3}}\log \left( \tfrac{193L^{(0)^{5/2}}(3Q_0)^m\,\Delta_f}{M_0^{1/2}\epsilon^{3}} \right)\quad when \quad L^{(0)}\geq 2(L_1+2\alpha(\bar M))
\end{equation*}
gradient evaluations, where $m \leq \lfloor \log_{\min(\gamma, 1/\rho)} ((L_1+2\alpha(\bar M)) / L^{(0)}) \rfloor + 1$.
\end{theorem}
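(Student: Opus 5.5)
The plan is to assemble the bound as (number of outer iterations) $\times$ (gradient evaluations per outer iteration), using the estimates already derived in \S\ref{subsec:outer} and \S\ref{subsec:inner}. Three ingredients are needed.

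\textbf{Outer iterations and correctness.} By Lemma~\ref{lem:sufficient_progress} and the telescoping argument of \S\ref{subsec:outer}, the number of outer iterations satisfies $K \le 1+10\epsilon^{-4/3}\Delta_f\bar M^{1/3}$. The termination test in Algorithm~\ref{alg:PF-AGD} then guarantees that the returned point satisfies $\|\nabla f(p_K)\|\le\epsilon$. The assumption $\epsilon\le \Delta_f^{3/4}\bar M^{1/4}$ gives $\epsilon^{-4/3}\Delta_f\bar M^{1/3}\ge 1$, so the additive $1$ can be absorbed and $K\le 11\,\Delta_f\bar M^{1/3}\epsilon^{-4/3}$.

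\textbf{Cost of one call to \textsc{Modified-AGD}.} I would use the bound on $T$ from \S\ref{subsec:inner}, obtained from Corollary~\ref{cor:modified_cor1_main} with $\sigma=\alpha(M_k)\ge\alpha(M_0)$ and $\psi\le\Delta_f$. This requires three checks.
\begin{itemize}
\item The regularized function $\hat f$ is $(L_1+2\alpha)$-smooth, so the backtracking bound yields $\bar L=\max\{L^{(0)},2(L_1+2\alpha(\bar M))\}$ for the choice $c=1/2$, $\rho=0.8$, $\gamma=2$.
\item Each inner call may be repeated within the while loop whenever $M_k$ is increased. By Lemma~\ref{lem:sufficient_progress} this happens at most $O(\log_\gamma(L_3/M_0))$ times in total, not per outer iteration, so it only contributes a lower-order additive term. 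I would either absorb it into the constants or note that it is dominated by the leading term.
\item The per-epoch count $G\le 4T+O(\log(\bar L/L^{(0)}))$ converts accepted steps into gradient evaluations.
\end{itemize}

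\textbf{Simplification by case split.} The two displayed bounds correspond to the case split on which argument attains the maximum defining $\bar L$.
\begin{itemize}
\item In the case $L^{(0)}\le 2(L_1+2\alpha(\bar M))$, I would use $\epsilon\le L_1^{3/2}/(8\bar M^{1/3})^{3/2}$, which is equivalent to $4\alpha(\bar M)\le L_1$. This gives $\bar L\le 3L_1$ (or a similar constant) and
\[
\sqrt{\bar Q}\le c\,L_1^{1/2}M_0^{-1/6}\epsilon^{-1/3}.
\]
Multiplying by $K$ produces the prefactor $\Delta_f L_1^{1/2}\bar M^{1/3}M_0^{-1/6}\epsilon^{-5/3}$. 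Inside the logarithm, the quantity $200\bar L\bar Q^{3/2}(3\bar Q)^m\Delta_f/\epsilon^2$ becomes a constant times $L_1^{5/2+m}(9/2)^m\Delta_f/(M_0^{1/2+m/3}\epsilon^{3+2m/3})$, using $\bar Q\le 3L_1/(2M_0^{1/3}\epsilon^{2/3})$.
\item In the other case, $\bar L=L^{(0)}$ and $\bar Q\le Q_0$-type quantities appear, which gives the second display in the same way.
\end{itemize}

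\textbf{Main obstacle.} The hard part is not conceptual but bookkeeping. I need to make the additive constants ($+1$ in $T$, the logarithmic backtracking overhead, and the factor $4$ from restarts) fit inside the stated numerical constants $27$ and $16$. I would do this by using $\epsilon$ small, via the two hypotheses, to ensure the logarithm is at least $1$, so that the additive terms can be folded multiplicatively into the leading product.
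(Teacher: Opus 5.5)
Your skeleton is the paper's: $K\le 11\Delta_f\bar M^{1/3}\epsilon^{-4/3}$ via $\epsilon^{-4/3}\Delta_f\bar M^{1/3}\ge 1$; the inner bound with the $\max$ term bounded by $\bar Q$; $4\alpha(\bar M)\le L_1$, giving $\bar L\le 3L_1$ and $\bar Q\le 3L_1/(2M_0^{1/3}\epsilon^{2/3})$; and the split on which term attains $\bar L$.

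The plan breaks at the step you call bookkeeping. With your $G\le 4T$ count it cannot produce the constants $27$ and $16$. Those constants are exactly $2\cdot 11\cdot\sqrt{3/2}\approx 26.94$ and $2\cdot 11\cdot 2^{-1/2}\approx 15.56$. The paper multiplies $K$ by $T\le\sqrt{3/2}\,L_1^{1/2}M_0^{-1/6}\epsilon^{-1/3}\log(\cdots)$ (resp.\ $T\le 2^{-1/2}(L^{(0)})^{1/2}M_0^{-1/6}\epsilon^{-1/3}\log(\cdots)$) and by a factor $2$. It writes the total as $2KT+O(R)$, with $R$ the rejected or restarted steps, and leaves $O(R)$ out of the displayed bound. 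A multiplicative $4$ instead gives about $54$ and $31$, and no lower bound on the logarithm removes a multiplicative factor.

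The same problem affects the $+1$ in $T$: folding it into the prefactor would consume slack that essentially does not exist ($26.94$ versus $27$). The paper instead pushes it inside the logarithm, via $1+S\log x=S\log(e^{1/S}x)$ with $S=\sqrt{3/2}\,L_1^{1/2}M_0^{-1/6}\epsilon^{-1/3}\ge\sqrt{12}$. The bound $S\ge\sqrt{12}$ comes from $\epsilon^{2/3}\le L_1/(8\bar M^{1/3})$ and $M_0\le\bar M$. This is exactly where $1103\ge 600(3/2)^{3/2}$ becomes $1473\ge 1103e^{1/\sqrt{12}}$. In the second case the paper uses $Q_0\ge 1$ and $1+\log x=\log(ex)$, giving $193\ge 200e/2^{3/2}$.

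So to obtain the statement as written, you have to adopt the paper's convention. Count two gradient evaluations per accepted inner step, and treat the following as lower-order overhead outside the displayed count: restarts, rejected backtracking trials, and the extra \textsc{Modified-AGD} calls triggered by increases of $M_k$.

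You are right that those reruns occur only $O(\log_\gamma(L_3/M_0))$ times in total, so they cost $O(T\log(L_3/M_0))$, which is lower order in $\epsilon$. But the hypotheses only give $\epsilon^{-4/3}\Delta_f\bar M^{1/3}\ge 1$, so this cost cannot be absorbed into $27$ or $16$ either. The paper's displayed constants do not account for it; it sits in the unquantified $O(R)$. With fully honest counting, the $\tilde{\mathcal{O}}(\epsilon^{-5/3})$ rate survives, but the leading constants roughly double.
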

Ignoring problem-dependent constants, this result matches the best known $\mathcal{O}(\epsilon^{-5/3}\log(1/\epsilon))$ rate for deterministic first-order methods under third-order smoothness, while removing the need to know smoothness parameters in advance. The two-case structure arises because when $L^{(0)}$ is sufficiently large, it already serves as an adequate upper bound on the Lipschitz constant of the true objective, and the bound depends directly on $L^{(0)}$; for smaller initializations, it reduces to a bound expressed in terms of the true smoothness parameter $L_1$. Notably, the bound is independent of dimension $d$ but depends on $m$, which is logarithmically bounded and independent of $\epsilon$.

\section{Experiments}\label{section:experiments}
\begin{figure}[t]
  \centering
  \includegraphics[width=0.98\linewidth]{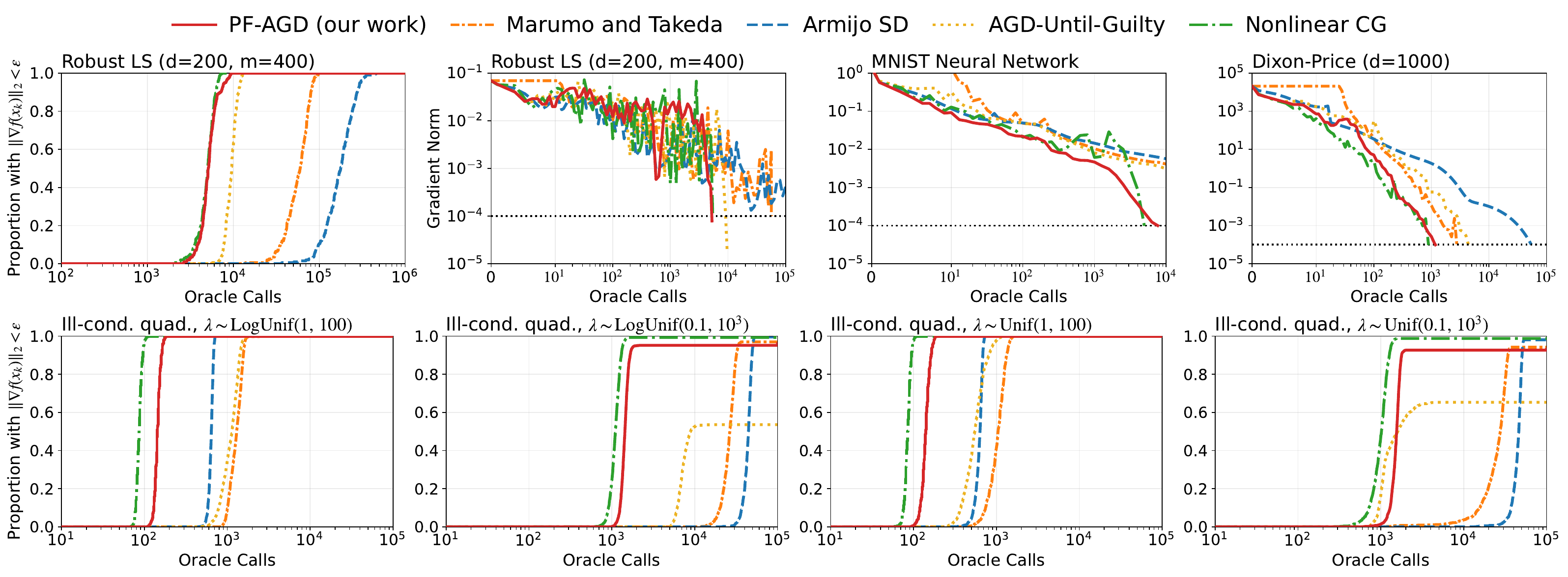}
  \caption{\textbf{PF-AGD vs. state-of-the-art first-order methods.} Top row: convergence on robust least squares, MNIST neural network, and the Dixon--Price function. Bottom row: robustness to condition number on ill-conditioned quadratics under various spectral distributions. All experiments are run on iMac M1 CPU. As all methods are first-order and dominated by gradient evaluations, oracle calls serve as a proxy for wall-clock time.}
\label{fig:results}
\end{figure}
We evaluate whether the theoretical advantages of \textsc{PF-AGD} translate into practical performance, particularly in settings where smoothness constants are unknown or difficult to estimate. Specifically, we compare \textsc{PF-AGD} to \textsc{AGD-Until-Guilty}~\cite{pmlr-v70-carmon17a}, nonlinear CG~\cite{PolakRibiere1969}, and contrast exploiting $L_3$-smoothness against parameter-free $L_2$-Lipschitz Hessian methods \cite{doi:10.1137/22M1540934}. Armijo steepest descent (SD)~\citep{Armijo1966MinimizationOF} is included as a baseline. All methods use accuracy $\varepsilon = 10^{-4}$ with fixed backtracking constants. We mitigate the number of restarts $m$ by initializing $L_1^{(0)}$ via a local finite-difference estimate of the gradient Lipschitz constant. Our benchmarks cover ML tasks, smooth objectives, and insufficiently-smooth ones; implementation specifics and further benchmarks on diverse landscapes (regularized quadratic, Qing \cite{Qing2006}, Rosenbrock \cite{Rosenbrock1960AnAM}, Ackley \cite{Ackley1987}, Powell \cite{Powell1962}, and \textsc{SCosine} functions) are given in Appendix~\ref{app:implementation} and their performances detailed in Table~\ref{tab:pfagd_empirical_summary}. Figure~\ref{fig:results} reports the oracle calls required to reach $\|\nabla f(x_k)\| < \varepsilon$ via empirical CDFs and gradient norm trajectories, where $x_k$ is the $k$-th oracle (outer loop) iterate. Across all problems, \textsc{PF-AGD} consistently outperforms \textsc{AGD-Until-Guilty} and is competitive against nonlinear CG. 

\paragraph{ML Tasks.}
We minimize the smooth biweight loss~\cite{Beaton1974} $f(x) = \frac{1}{m}\sum_{i=1}^{m}\phi(a_i^\top x - b_i)$ where $\phi(\theta) = \theta^2/(1+\theta^2)$ ($d=200$, $m=400$). The CDF across 1{,}000 seeds in Figure~\ref{fig:results} (top-left) reveals that \textsc{PF-AGD} performs comparably to nonlinear CG, though slightly less efficiently, while \textsc{AGD-Until-Guilty} requires nearly double the oracle calls. Moreover, the method of \citet{doi:10.1137/22M1540934} is even less practical, requiring more than $10^5$ oracle calls to fully converge. On a single seed (top, second panel), \textsc{PF-AGD} beats nonlinear CG with both methods converging comparably around 5{,}000 oracle calls; the former having significantly lower variance with both methods outperforming Armijo SD and \textsc{AGD-Until-Guilty}. We further evaluate performance on MNIST classification~\cite{mnist} using a fully connected neural network with (128, 64, 32) hidden units (12{,}074 parameters) and mean cross-entropy loss. All 60{,}000 training images are projected onto their top 10 principal components with Xavier-initialized weights \cite{pmlr-v9-glorot10a}. Figure~\ref{fig:results} (top, third panel) shows that, after $10^4$ oracle calls, only nonlinear CG and \textsc{PF-AGD} achieve the target gradient norm accuracy, with nonlinear CG exhibiting a slight advantage. Despite this, both methods follow nearly identical loss function trajectories (see Appendix~\ref{app:further_exp}, Figure~\ref{fig:mnist_nn}). Exact gradient computations currently limit our approach's applicability to neural network models of modern scale.

\paragraph{Quadratic Objectives.}
We minimize the quadratic $f(x) = \frac{1}{2}x^\top Hx + \mathbf{1}^\top x$, where $x\in\R^{100}$. The Hessian $H$ is a Haar-random rotation of a diagonal matrix. By varying the condition number $\kappa(H)$ from $10^2$ to $10^4$ across uniform and $\log$-uniform spectra, we observe in Figure~\ref{fig:results} (bottom row) that while all methods converge reliably at low $\kappa$, with nonlinear CG marginally outperforming \textsc{PF-AGD}, performance diverges as ill-conditioning increases. Notably, \textsc{AGD-Until-Guilty} stagnates below 70\% convergence and exhibits significant spectral sensitivity, characterized by an $8\times$ variation in median performance. In contrast, \textsc{PF-AGD} achieves a convergence rate exceeding 90\% and exhibits consistent performance across distributions, closing the gap to nonlinear CG as $\kappa$ grows.

\paragraph{Insufficiently-Smooth  Objectives.}
The Dixon--Price function~\citep{Dixon1989} $f(x) = (x_1-1)^2 + \sum_{i=2}^{d} i\,(2x_i^2 - x_{i-1})^2$ ($d=1{,}000$) violates $L_1$ and $L_3$-smoothness; we initialize experiments near the optimum at $x_0\sim \mathcal{N}(x^*, 10^{-1}I_d)$. Figure~\ref{fig:results} (top right), illustrates the evolution of the gradient norm for a single run. Nonlinear CG remains the most efficient method, with \textsc{PF-AGD} achieving comparable results; both methods reaching $10^{-4}$ accuracy within 200--300 oracle calls, notably outperforming both \cite{doi:10.1137/22M1540934} and \textsc{AGD-Until-Guilty}. This demonstrates the robustness of our approach beyond our assumptions.

\paragraph{Conclusion and Future Work.}
Our results suggest that theoretically optimal methods can also be practically competitive, with \textsc{PF-AGD} matching the empirical performance of nonlinear CG methods. Taken together, our results indicate that the dependence of accelerated non-convex methods on global smoothness constants is not fundamental. Moreover, comparison with \citet{doi:10.1137/22M1540934} suggests that leveraging $L_3$ yields an empirically more efficient algorithm than parameter-free methods assuming an $L_2$-Lipschitz Hessian. We note that negative curvature remains rarely exploited in practice, suggesting the loss function is ``effectively convex'' in large portions of the optimization trajectory, consistent with empirical observations of \citet{pmlr-v70-carmon17a}. This phenomenon merits further investigation. 

We identify three avenues for extending this work. Future research can focus on removing the logarithmic factor, a byproduct of the inner loop, via alternative acceleration schemes. \citet{doi:10.1137/22M1540934} offer a parameter-free approach without the logarithmic overhead; however, it necessitates iterate averaging and a non-standard momentum parameter of $\omega_k=k/(k+1)$, where $k$ counts steps within the current epoch. Crucially, their convergence analysis relies on a Jensen-type inequality for gradients, which we hypothesize cannot be directly generalized to third-order smoothness. While nonlinear CG is empirically robust for non-convex problems, theoretical guarantees are lacking. We defer the development of analysis for nonlinear CG on non-convex landscapes to future research. Finally, the current algorithm is deterministic and relies on exact gradient computations. This requirement limits scalability in modern deep learning contexts where mini-batching is standard (e.g., Adam \cite{Adam} and SGD \cite{Robbins&Monro:1951}). Future work will seek to extend this framework to the stochastic setting by integrating the stochastic line search techniques proposed by \citet{doi:10.1137/18M1216250, doi:10.1137/19M1291832}, and \citet{Vaswani2019PainlessSG}.

\section*{Acknowledgements}
Coralia Cartis  was supported by the Hong Kong Innovation and Technology Commission (InnoHK Project CIMDA) and by the EPSRC grant EP/Y028872/1, Mathematical Foundations of
Intelligence: An “Erlangen Programme” for AI. Sadok Jerad was supported by the Hong Kong Innovation and Technology Commission (InnoHK Project CIMDA).

\bibliography{references}
\nocite{*}

\newpage

\appendix

\section{Proofs}\label{app:proofs}

\subsection{Preliminaries}\label{sec:prelims}
In this section, we introduce notation and briefly overview the definitions and results we use throughout. We begin by characterizing the regularity of the functions under consideration.

\begin{definition}[\(L_p\)-Lipschitz \(p\)-th derivative]
\label{def:Lipschitz derivative}
Let $p\in\mathbb{N}$ and $L_p>0$. A function $f:\mathbb{R}^d\to\mathbb{R}$ is said to have an $L_p$-Lipschitz $p$-th derivative if $f$ is $p$-times continuously differentiable and
\[
\|\nabla^p f(x)-\nabla^p f(y)\|\le L_p\|x-y\|\qquad \text{for all } x,y\in\mathbb{R}^d,
\]
where $\|\cdot\|$ denotes the operator norm induced by the Euclidean norm. In particular, for $p=1$, we say that $f$ is \emph{$L_1$-smooth} if its gradient is $L_1$-Lipschitz continuous.
\end{definition}

While Lipschitz continuity of the derivatives provides an upper bound on how quickly the function can change, we also require a lower bound on the curvature to ensure the existence of a unique minimizer. This is captured by the notion of strong convexity.

\begin{definition}[$\sigma$-strong convexity]
\label{def:strong_convexity}
A function $f: \mathbb{R}^d \to \mathbb{R}$ is said to be $\sigma$-strongly convex with $\sigma > 0$ if, for all $x, y \in \mathbb{R}^d$, the following inequality holds:
\begin{equation*}
    f(y) \ge f(x) + \langle \nabla f(x), y - x \rangle + \tfrac{\sigma}{2} \|y - x\|^2.
\end{equation*}
\end{definition}

By combining the upper bound provided by $L_1$-smoothness and the lower bound provided by $\sigma$-strong convexity, we can characterize the overall ``difficulty'' of optimizing the function $f$. This relationship is formalized by the condition number.

\begin{definition}[Condition Number]
\label{def:condition_number}
Let $f: \mathbb{R}^d \to \mathbb{R}$ be $L_1$-smooth and $\sigma$-strongly convex. The condition number of $f$, denoted by $Q$, is defined as the ratio:
\begin{equation*}
    Q = \frac{L_1}{\sigma}.
\end{equation*}
Since $L_1 \ge \sigma$ by definition, it follows that $Q \ge 1$.
\end{definition}

\subsubsection{Adaptive Backtracking for Accelerated Gradient Descent}
For completeness, we state an additional assumption on $d_k$, noting that the steepest descent direction, $d_k = -\nabla f(x_k)$, is inherently gradient-related with $c_1 = c_2 = 1$.

\begin{definition}[Gradient related]
The directions $d_k$ are gradient related if there are $c_1 >0$ and $c_2>0$ such that $\langle\nabla f(x_k),-d_k\rangle \geq c_1 \|\nabla f(x_k)\|^2$ and $\|d_k\| \leq c_2\|\nabla f(x_k)\|$, for all $k \geq 0$.
\end{definition}

The following proposition establishes that under standard smoothness assumptions, the step sizes generated by the adaptive scheme in Algorithm~\ref{alg:ABLS} are strictly bounded away from zero, preventing the algorithm from stalling.

\begin{proposition}[Cavalcanti et al. {\cite[Proposition 9]{cavalcanti2025adaptive}}]\label{prop:1}
Let $f$ be $L_1$-smooth and $d_k$ gradient related. Given appropriate inputs, Algorithm~\ref{alg:ABLS} returns a step size $\alpha_k$ such that
\[
  \alpha_k \ge \min\Bigl\{\alpha_0,\,
    \rho \frac{2(1-c)c_1}{L_1 c_2^2}
  \Bigr\} > 0.
\]
\end{proposition}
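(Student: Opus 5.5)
The statement to prove is Proposition~\ref{prop:1}, the lower bound on the step size returned by Algorithm~\ref{alg:ABLS}. I would argue by cases on whether backtracking ever occurs.

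\emph{No backtracking.} If the initial step $\alpha_0$ already satisfies $v(\alpha_0)\ge 1$, the loop never runs. The output is $\alpha_0$ and the bound holds trivially.

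\emph{Backtracking occurs.} Consider the last rejected trial step $\bar\alpha$, so $v(\bar\alpha)<1$, and the returned step $\alpha_k=\hat\rho(v(\bar\alpha))\,\bar\alpha$. The aim is to lower bound both factors.

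\emph{Step 1: a lower bound on $v$.} By the descent lemma for $L_1$-smooth $f$,
\[
f(x_k+\alpha d_k)-f(x_k)\le \alpha\langle\nabla f(x_k),d_k\rangle+\tfrac{L_1\alpha^2}{2}\|d_k\|^2 .
\]
Divide by the negative quantity $c\,\alpha\langle\nabla f(x_k),d_k\rangle$, which flips the inequality. Then use the gradient-related bounds $-\langle\nabla f,d_k\rangle\ge c_1\|\nabla f\|^2$ and $\|d_k\|\le c_2\|\nabla f\|$. This gives
\[
v(\alpha)\ \ge\ \tfrac{1}{c}\Bigl(1-\tfrac{L_1\alpha c_2^2}{2c_1}\Bigr),
\qquad\text{equivalently}\qquad
1-c\,v(\alpha)\le \tfrac{L_1\alpha c_2^2}{2c_1}.
\]

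\emph{Step 2: combine with the adaptive factor.} Ignoring the $\varepsilon_{\min}$ floor, the returned step satisfies
\[
\alpha_k\ \ge\ \rho\,\tfrac{1-c}{1-c\,v(\bar\alpha)}\,\bar\alpha
\ \ge\ \rho\,\tfrac{(1-c)\,2c_1}{L_1\bar\alpha c_2^2}\,\bar\alpha
\ =\ \rho\,\tfrac{2(1-c)c_1}{L_1c_2^2}.
\]
The point of the adaptive factor is that $\bar\alpha$ cancels: a larger violation produces a proportionally larger shrink, so the shrink never overshoots below the threshold. This holds whenever $1-cv(\bar\alpha)>0$, which is automatic because $v(\bar\alpha)<1$ and $c<1$.

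\emph{Step 3: the $\varepsilon_{\min}$ branch.} This is the delicate step. When the maximum in $\hat\rho$ is attained by $\varepsilon_{\min}$, we have $\hat\rho\ge\rho(1-c)/(1-cv)$ anyway, so Step~2 only becomes stronger. The max is therefore harmless.

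\emph{Step 4: ensure the returned step is accepted.} Suppose instead the loop stops only at a later trial step. Every rejected step $\alpha'$ must satisfy $1-cv(\alpha')>1-c$. Combined with Step~1, this gives $\alpha'>2(1-c)c_1/(L_1c_2^2)$, so every rejected step already exceeds the threshold. Any step at or below that threshold is therefore accepted.

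The key point for termination is that the argument in Step~2 applies to every rejection, not just the last one. Each shrink lands at or above $\rho\cdot$threshold. Since every step at or below the threshold is accepted, the loop halts with $\alpha_k\ge\min\{\alpha_0,\rho\,2(1-c)c_1/(L_1c_2^2)\}$, as claimed.
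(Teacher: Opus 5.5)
Your proof is correct. The paper gives no proof of its own here; it quotes the result from Cavalcanti et al. Your argument is the natural one: the descent lemma gives $1-c\,v(\alpha)\le L_1c_2^2\alpha/(2c_1)$, so the last rejected step $\bar\alpha$ cancels in $\hat\rho(v(\bar\alpha))\,\bar\alpha$, and the $\varepsilon_{\min}$ floor can only help.

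Two small tidy-ups:
\begin{itemize}
\item \emph{Termination in Step~4.} Halting comes from $\hat\rho(v)\le\max\{\varepsilon_{\min},\rho\}<1$ whenever $v<1$, which holds because $1-cv>1-c$. So trial steps shrink geometrically until they fall to $2(1-c)c_1/(L_1c_2^2)$ or below, where they are accepted. The lower bound on each shrink is not what gives halting.
\item \emph{Meaning of $\alpha_0$.} The paper's corollary $L_1^{(k)}\le\bar L_1$ reads $\alpha_0$ as the very first step size, not the input of the current call. Under that reading, add the one-line induction $\alpha_k\ge\min\{\alpha_{k-1},\,\rho\,2(1-c)c_1/(L_1c_2^2)\}$ across calls, each warm-started at the previous output.
\end{itemize}
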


Using the steepest descent direction and step size, we can infer the estimate of $L_1$ at iteration $k$ by taking the reciprocal of the step size $\alpha_k$. Since the latter is bounded below, our estimate of $L_1$ must be bounded above by some $\bar L_1$ given below.

\begin{corollary}
Defining $L_1^{(k)} := \frac{1}{\alpha_k}$ as the estimate of $L_1$ at iteration $k$ and setting $c_1,c_2 = 1$, we have
\[
  L_1^{(k)} \le
  \max\Bigl\{
    L_1^{(0)},\; \frac{L_1}{2(1-c)\rho}
  \Bigr\} := \bar{L}_1
\]
\end{corollary}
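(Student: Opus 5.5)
The corollary follows by translating Proposition~\ref{prop:1} from step sizes into curvature estimates. By construction $L_1^{(k)} = 1/\alpha_k$, so any positive lower bound on $\alpha_k$ becomes an upper bound on $L_1^{(k)}$ once we take reciprocals.

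First, check that the proposition applies. Algorithm~\ref{alg:ABLS} uses the steepest descent direction $d_k=-\nabla f(x_k)$. This direction satisfies $\langle \nabla f(x_k),-d_k\rangle=\|\nabla f(x_k)\|^2$ and $\|d_k\|=\|\nabla f(x_k)\|$, so it is gradient related with $c_1=c_2=1$. Proposition~\ref{prop:1} then gives
\[
\alpha_k \ge \min\Bigl\{\alpha_0,\ \rho\,\tfrac{2(1-c)}{L_1}\Bigr\}.
\]
Here $\alpha_0=1/L_1^{(0)}$ is the initial step size passed to the line search.

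Next, take reciprocals. Inverting a minimum of positive quantities turns it into a maximum of the reciprocals, which gives
\[
L_1^{(k)}=\tfrac{1}{\alpha_k}\le \max\Bigl\{L_1^{(0)},\ \tfrac{L_1}{2(1-c)\rho}\Bigr\}=\bar L_1.
\]

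The one point needing care is the warm start. Each call to Algorithm~\ref{alg:ABLS} starts from the previous step size rather than from $\alpha_0$, so the bound must hold along the whole chain of calls. I would handle this by induction on $k$. If the warm-start step size already satisfies the Armijo condition, it is returned unchanged, and by the inductive hypothesis it is at least the minimum above. If it is rejected, Proposition~\ref{prop:1} applied with that warm start as its initial step gives a lower bound of the minimum of the warm start and $2(1-c)\rho/L_1$, and both terms are at least the minimum above. The key fact behind the second case is that rejection can only happen while $\alpha>2(1-c)/L_1$, and the adaptive factor shrinks the step by at least $\rho$ times the threshold ratio. This is exactly the content of Proposition~\ref{prop:1}, which we take as given.
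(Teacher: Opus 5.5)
Your proof is correct and follows the paper's argument: Proposition~\ref{prop:1} with $c_1=c_2=1$ gives $\alpha_k \ge \min\{\alpha_0,\, 2(1-c)\rho/L_1\}$, and taking reciprocals gives the bound $\bar L_1$. Your added induction over warm-started calls is sound, since rejection forces $\alpha > 2(1-c)/L_1$ and the adaptive factor then keeps the new step above $2(1-c)\rho/L_1$; the paper leaves this step implicit in its appeal to Proposition~\ref{prop:1}.
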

\begin{proof}
Applying Proposition~\ref{prop:1} with $c_1,c_2 = 1$ gives 
$\alpha_k \ge \min\Bigl\{\alpha_0,\, \rho \frac{2(1-c)}{L_1}\Bigr\}$
and we get the desired bound by taking the reciprocal as per the definition of $L_1^{(k)}$.
\end{proof}

It remains to combine adaptive line search with NAG, leading to Algorithm~\ref{alg:AGD}. For adaptive algorithms, we denote the $k$-th iteration estimate of the condition number as $Q_k := L_1^{(k)}/\sigma$ with the convention that $Q_{-1}:=Q_0$.

\begin{algorithm}[H]
\caption{Nesterov's Accelerated Gradient Descent (Adaptive)}
\label{alg:AGD}
\begin{algorithmic}[1]
  \State \textbf{Input:} $x_k, y_k \in \mathbb{R}^d$, $\nabla f(x_k)$,
    estimate $L_1^{(k)}$ of the Lipschitz constant with $L_1^{(k)} > \sigma > 0$
  \State \textbf{Output:} next points $x_{k+1}, y_{k+1}$
  \State $y_{k+1} \gets x_k - \frac{1}{L_1^{(k)}}\,\nabla f(x_k)$
  \State $\omega_k \gets \tfrac{\sqrt{L_1^{(k)}} - \sqrt{\sigma}}{\sqrt{L_1^{(k)}} + \sqrt{\sigma}}$
  \State $x_{k+1} \gets (1 + \omega_k)\,y_{k+1} - \omega_k\,y_k$
\end{algorithmic}
\end{algorithm}

To establish that adaptive line search preserves the accelerated rate of NAG, \citet{cavalcanti2025adaptive} employ a Lyapunov argument based on the function $V^{(w)}_t$ defined by
\begin{equation}
\label{lyapunov_function}
  V^{(w)}_t(y_k, x_k)
  = f(y_k) - f(w) + \frac{\sigma}{2}\,\|z_t(y_k,x_k) - w\|^2,
\end{equation}
where $w \in \mathbb{R}^d$ and the auxiliary point $z_t(y_k,x_k)$ is defined as
\begin{equation*}
  z_t(y_k,x_k) = x_k + \sqrt{Q_{t-1}}\, (x_k - y_k).
\end{equation*}

The first result gives a recursion for the iterates while keeping $Q_k$ fixed in the auxiliary point.
\begin{lemma}[Cavalcanti et al. {\cite[Lemma 1]{cavalcanti2025adaptive}}]\label{lem:app-AGD-Lyap-1}
    Let $f$ be $L_1$-smooth and $\sigma$-strongly convex. If the Lipschitz constant estimates \(L^{(k)}_1\) of accelerated gradient descent (Algorithm~\ref{alg:AGD}) are generated by adaptive backtracking (Algorithm~\ref{alg:ABLS}) with \(c\in\mathopen{[}1/2, 1\mathopen{)}\), $w=x^*$ and \(L^{(0)}_1>\sigma\), then for \(k\geq 0\)
    \begin{align*}
        (1+\delta_{k+1})V^{(x^*)}_{k+1}(y_{k+1}, x_{k+1}) - V^{(x^*)}_{k+1}(y_{k}, x_{k})
        \leq 0,
    \end{align*}
    where \(\delta_{k+1} = 1/(\sqrt{Q_{k}} - 1)\).
\end{lemma}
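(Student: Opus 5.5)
The plan is to run the classical one-step Lyapunov (estimate-sequence) argument for Nesterov's method with the fixed curvature estimate $Q:=Q_k=L_1^{(k)}/\sigma$ used throughout step $k$. The target inequality is then read as $V_{k+1}(y_{k+1},x_{k+1})\le (1-1/\sqrt{Q})\,V_{k+1}(y_k,x_k)$. This is equivalent to the claim, since $(1+\delta_{k+1})^{-1}=(\sqrt{Q}-1)/\sqrt{Q}=1-1/\sqrt{Q}$. A key point is that both Lyapunov values use the same index $k+1$, so the auxiliary point is built with $\sqrt{Q_k}$ in both, and the varying estimates never enter this single step.

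\textbf{Step 1 (descent).} Write $g=\nabla f(x_k)$ and $L=L_1^{(k)}$. Algorithm~\ref{alg:ABLS} terminates only when \eqref{eq:armijo} holds, giving $f(y_{k+1})\le f(x_k)-\tfrac{c}{L}\|g\|^2\le f(x_k)-\tfrac{1}{2L}\|g\|^2$ since $c\ge 1/2$. This is the only place the line search enters; no smoothness constant is used beyond termination of the backtracking.

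\textbf{Step 2 (algebra of the auxiliary point).} Set $z:=z_{k+1}(y_k,x_k)=(1+\sqrt{Q})x_k-\sqrt{Q}\,y_k$ and $z_+:=z_{k+1}(y_{k+1},x_{k+1})$. From $x_{k+1}=y_{k+1}+\omega(y_{k+1}-y_k)$ and $(1+\sqrt Q)\omega=\sqrt Q-1$ we get $z_+=\sqrt{Q}\,y_{k+1}-(\sqrt{Q}-1)y_k$. Substituting $y_{k+1}=x_k-g/L$ and $\sqrt{Q}/L=1/(\sigma\sqrt{Q})$ gives
\[
z_+=\bigl(1-\tfrac{1}{\sqrt Q}\bigr)z+\tfrac{1}{\sqrt Q}\Bigl(x_k-\tfrac{1}{\sigma}g\Bigr).
\]
Thus $z_+$ is a convex combination with weight $\theta=1/\sqrt Q$.

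\textbf{Step 3 (combine).} Expand $\tfrac{\sigma}{2}\|z_+-x^*\|^2$ using the convex-combination identity:
\[
\|(1-\theta)a+\theta b\|^2=(1-\theta)\|a\|^2+\theta\|b\|^2-\theta(1-\theta)\|a-b\|^2,
\]
or alternatively expand directly in $g$. This produces the terms $(1-\theta)\tfrac{\sigma}{2}\|z-x^*\|^2$, a cross term $-\theta\langle g, (1-\theta)z+\theta x_k - x^*\rangle$, and $+\tfrac{\theta^2}{2\sigma}\|g\|^2=\tfrac{1}{2L}\|g\|^2$.

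The last term is exactly cancelled by Step~1. For the cross term, note that $(1-\theta)z+\theta x_k=x_k+(1-\theta)\sqrt Q(x_k-y_k)$. We then bound the linear terms with strong convexity at $x_k$ applied twice: toward $x^*$ with weight $\theta$, giving $\langle g,x_k-x^*\rangle\ge f(x_k)-f(x^*)+\tfrac{\sigma}{2}\|x_k-x^*\|^2$, and toward $y_k$ with weight $1-\theta$, giving $\langle g,x_k-y_k\rangle\ge f(x_k)-f(y_k)$ (plus a quadratic term we may drop or use). Collecting, the $f(x_k)$ terms cancel, and we are left with $f(y_{k+1})-f(x^*)\le(1-\theta)(f(y_k)-f(x^*))+\dots$. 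The leftover quadratic terms should be nonpositive: the $\tfrac{\sigma\theta}{2}\|x_k-x^*\|^2$ from strong convexity absorbs the $\theta(1-\theta)\|\cdot\|^2$ correction.

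\textbf{Main obstacle.} I expect the main obstacle to be this final bookkeeping of the quadratic terms. I would first try the exact identity route, keeping the strong-convexity quadratic toward $x^*$, and verify that the residual is $-\tfrac{\sigma}{2}\theta(1-\theta)\|x_k-z\|^2\le 0$ or a similar sign-definite expression. Should that fail, I would redo the cross term by defining $z_+$ as the minimizer of the quadratic lower model $f(x_k)+\langle g,x-x_k\rangle+\tfrac{\sigma}{2}\|x-x_k\|^2$ mixed with $(1-\theta)\tfrac{\sigma}{2}\|x-z\|^2$, which is the standard estimate-sequence formulation and yields the inequality directly.
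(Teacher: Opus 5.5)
The paper does not prove this lemma; it imports it from Cavalcanti et al., remarking only that minimality of $x^*$ is never used. On its own merits your argument is correct and complete. It is the standard one-step potential argument for Nesterov's method, with three weighted inequalities:
\begin{itemize}
\item weight $\theta=1/\sqrt{Q_k}$ on strong convexity at $x_k$ toward $x^*$;
\item weight $1-\theta$ on strong convexity at $x_k$ toward $y_k$;
\item weight $1$ on the Armijo decrease.
\end{itemize}
Your Step~2 identity is right, and the bookkeeping you were unsure about closes exactly. Set $m=(1-\theta)z+\theta x_k-x^*=(x_k-x^*)+(\sqrt{Q_k}-1)(x_k-y_k)$. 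Then $\theta\langle g,m\rangle=\theta\langle g,x_k-x^*\rangle+(1-\theta)\langle g,x_k-y_k\rangle$, and one obtains
\[
V_{k+1}(y_{k+1},x_{k+1})\le(1-\theta)\,V_{k+1}(y_k,x_k)-\theta(1-\theta)\tfrac{\sigma}{2}\|z-x_k\|^2-(1-\theta)\tfrac{\sigma}{2}\|x_k-y_k\|^2 .
\]

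Two small points to tighten. First, the strong-convexity quadratic $\theta\frac{\sigma}{2}\|x_k-x^*\|^2$ does not absorb the $\theta(1-\theta)$ correction. It exactly cancels the $+\theta\frac{\sigma}{2}\|x_k-x^*\|^2$ produced by expanding $\|(1-\theta)(z-x^*)+\theta(x_k-x^*)\|^2$. The term $-\theta(1-\theta)\frac{\sigma}{2}\|z-x_k\|^2$ is simply a favorable leftover. Second, state explicitly that $\theta<1$. This is needed both for the weight $1-\theta$ to be nonnegative and for $\delta_{k+1}$ to be finite. It holds because the backtracking only shrinks step sizes, so $L_1^{(k)}\ge L_1^{(0)}>\sigma$.

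Your proof also makes the paper's remark precise. The only convexity used is at the ordered pairs $(x^*,x_k)$ and $(y_k,x_k)$. That is exactly why the lemma carries over to an arbitrary reference point $w$ under the pathwise condition \eqref{eq:pathwise-sc}.
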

Note that the proof does not require $w=x^*$ to be a minimizer. Next we want to keep the iterates fixed and obtain a recursion for varying $t$ indices of $z_t(y_k,x_k)$, i.e., $Q_t$.

\begin{lemma}[Cavalcanti et al. {\cite[Lemma 2]{cavalcanti2025adaptive}}]\label{lem:app-AGD-Lyap-2}
    Let $f$ be $L_1$-smooth and $\sigma$-strongly convex. Given initial points \(x_{0}=y_{0}\), if the estimates \(L^{(k)}_1\) of the Lipschitz constant in accelerated gradient descent (Algorithm~\ref{alg:AGD}) are generated monotonically by adaptive backtracking (Algorithm~\ref{alg:ABLS}) with \(c\in\mathopen{[}1/2, 1\mathopen{)}\) and \(L^{(0)}_1>\sigma\), then for \(k\geq 0\)
    \begin{align*}
        V^{(x^*)}_{k+1}(y_{k}, x_{k})
        \leq \frac{Q_{k}^{2}}{Q_{k-1}^{2}}V^{(x^*)}_{k}(y_{k}, x_{k}).
    \end{align*}
\end{lemma}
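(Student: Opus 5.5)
The plan is to compare the two Lyapunov values directly: they share the iterates $(y_k,x_k)$ and differ only in the auxiliary point. Throughout, write
\[
z:=z_k(y_k,x_k)=x_k+\sqrt{Q_{k-1}}(x_k-y_k),\qquad z':=z_{k+1}(y_k,x_k)=x_k+\sqrt{Q_k}(x_k-y_k),\qquad r:=\sqrt{Q_k/Q_{k-1}}\ge 1 .
\]
The inequality $r\ge1$ is where monotonicity of the estimates $L_1^{(k)}$ is used. The function-value part $f(y_k)-f(w)$ is identical in both Lyapunov values, so everything reduces to bounding $\|z'-w\|^2$ in terms of $\|z-w\|^2$ and $f(y_k)-f(w)$, with $w=x^*$.

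\textbf{Rewriting $z'$.} Since $x_k-y_k=(z-x_k)/\sqrt{Q_{k-1}}$, we get $z'=x_k+r(z-x_k)$, that is,
\[
z'-w=r(z-w)-(r-1)(x_k-w).
\]
Inverting the definition of $z$ shows that $x_k$ is a convex combination of $z$ and $y_k$:
\[
x_k=\lambda z+(1-\lambda)y_k,\qquad \lambda=\frac{1}{1+\sqrt{Q_{k-1}}}\in(0,1).
\]
Substituting this in gives $z'-w=a(z-w)-b(y_k-w)$ with coefficients
\[
a=r-(r-1)\lambda\in[1,r],\qquad b=(r-1)(1-\lambda)\in[0,r-1].
\]

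\textbf{Controlling the two pieces.} I would apply the weighted Young inequality
\[
\|aX-bY\|^2\le(1+\theta)a^2\|X\|^2+(1+\theta^{-1})b^2\|Y\|^2
\]
with $\theta=r^2-1$, assuming $r>1$; the case $r=1$ is trivial since then $z'=z$. The coefficient on $\|z-w\|^2$ is then at most $r^2\cdot r^2=r^4=Q_k^2/Q_{k-1}^2$. For the $\|y_k-w\|^2$ term I use strong convexity at $w=x^*$, where $\nabla f(x^*)=0$, to get
\[
\tfrac{\sigma}{2}\|y_k-w\|^2\le f(y_k)-f(w).
\]
This step also needs $f(y_k)-f(w)\ge0$, which is automatic for the minimizer, so this is exactly where $w=x^*$ enters. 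The resulting coefficient on $f(y_k)-f(w)$ is at most
\[
1+\frac{r^2}{r^2-1}(r-1)^2=1+\frac{r^2(r-1)}{r+1}.
\]

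\textbf{Final comparison.} It remains to check
\[
1+\frac{r^2(r-1)}{r+1}\le r^4 .
\]
This is equivalent to $r^2/(r+1)\le(r+1)(r^2+1)$ for $r>1$, which clearly holds. Combining the two pieces yields $V_{k+1}\le r^4 V_k$.

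The main obstacle I anticipate is choosing the split so that both coefficients land under $r^4$ simultaneously. A crude bound such as $\|A+B\|^2\le2\|A\|^2+2\|B\|^2$ loses a factor of $2$ and does not give the exponent $2$ in $Q_k^2/Q_{k-1}^2$. It is therefore essential to write $x_k$ as a convex combination of $z$ and $y_k$, which keeps $b\le r-1$, and to tune $\theta$ to $r^2-1$.
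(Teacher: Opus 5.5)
Your proof is correct, but it takes a different route from the one the paper writes out. The paper only cites this lemma from Cavalcanti et al.; the proof it actually gives is for its generalization, Lemma~\ref{lemma-5}, which splits on the sign of $\langle x_k-y_k,\,x_k-w\rangle$:
\begin{itemize}
\item In the nonnegative case it completes a square to get $V_{k+1}-V_k\le\frac{Q_k-Q_{k-1}}{Q_k}V_{k+1}$ (plus gradient slack).
\item In the negative case it uses $\|x_k-w\|\le\|y_k-w\|$ and a Young inequality with $\alpha^2=\sqrt{Q_{k-1}/Q_k}$. It then bounds each squared norm by $V_k$ or $V_{k+1}$ and moves the $V_{k+1}$ terms to the left.
\end{itemize}
You avoid both the case split and this absorption step. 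The identity $x_k=\lambda z+(1-\lambda)y_k$ gives the explicit decomposition $z'-w=a(z-w)-b(y_k-w)$ with $a\in[1,r]$ and $b\in[0,r-1]$. One tuned Young inequality plus $\tfrac{\sigma}{2}\|y_k-x^*\|^2\le f(y_k)-f(x^*)$ then closes the argument. A small point: the nonnegativity of $f(y_k)-f(x^*)$ is really used only when you enlarge its coefficient to $r^4$, and it is automatic.

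Your route is shorter and fully explicit, and it is not tied to the exponent $2$. Choosing $\theta=r-1$ gives coefficients $ra^2\le r^3$ and $1+r(r-1)\le r^3$, i.e.\ $V_{k+1}\le(Q_k/Q_{k-1})^{3/2}V_k$. This is exactly the sharper bound the paper's Lemma~\ref{lemma-5} yields at $w=x^*$.

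What the paper's case-based template buys is reuse. The same structure carries over to arbitrary reference points, with the $\|\nabla f(w)\|^2$ slack via Lemma~\ref{lower_bound}, and to descent reference points (Lemma~\ref{lemma-6}, via Lemma~\ref{lemma: optimality_bound}). To extend your decomposition, you would substitute those bounds at the single step where you invoke strong convexity at the minimizer. For arbitrary $w$ you would also need to handle the possibly negative gap $f(y_k)-f(w)$ in the final coefficient comparison.
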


This proof does rely on the optimality of $w=x^*$; specifically, the non-negativity of the objective gap $f(y_k) - f(x^*) \geq 0$ in (30) and (36), and implicitly, $\nabla f(x^*) = 0$ in (37).

\subsection{Generalized Lyapunov Analysis for Adaptive AGD}\label{sec:Lyapunov_analysis}
The analysis proceeds along two parallel tracks, distinguished by what is known about $w$. In the first track (Sections~\ref{sec:lyap-arbitrary} and~\ref{sec:conv-arbitrary}), we show that the Lyapunov recursion acquires an additional slack term proportional to $\|\nabla f(w)\|^2$. A concrete counterexample in Section~\ref{sec:lyap-arbitrary} shows this slack is necessary. In the second track (Sections~\ref{sec:lyap-descent} and~\ref{sec:conv-descent}), the additional assumption that $w$ satisfies the descent condition allows the slack term to be eliminated entirely, recovering a cleaner recursion. The first track feeds into the non-convexity certificate in line~1 of \textsc{Certify-Progress} with $w=y_0$; the second feeds into the stall detection in line~4 with $w=w^{\min}_t$.

\subsubsection{Lyapunov Recursion for Arbitrary Reference Points}\label{sec:lyap-arbitrary}
As previously mentioned, we no longer have the fact that $f(y_k) - f(w)$ is non-negative. Nevertheless, the following lemma provides us a good lower bound for our analysis.

\begin{lemma}
Let $f:\mathbb{R}^d\to\mathbb{R}$ be differentiable. Fix $w \in \mathbb{R}^d$. If for $k = 0,1,\ldots,t-1$ we have
\begin{equation}
\label{pathwise sc w}
f(y_k) \ge f(w) + \nabla f(w)^\top (y_k - w) + \frac{\sigma}{2}\lVert y_k - w \rVert^2.
\end{equation}

Then for all $w,y_k\in\mathbb{R}^d$,
\[
f(y_k)-f(w)\ge -\frac{1}{\sigma}\|\nabla f(w)\|^2.
\]
\label{lower_bound}
\end{lemma}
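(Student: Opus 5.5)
The inequality follows from minimizing the quadratic lower bound in \eqref{pathwise sc w} over the displacement; no property of the algorithm is needed. Fix $k\in\{0,\dots,t-1\}$ and set $h := y_k - w$. Hypothesis \eqref{pathwise sc w} gives
\[
f(y_k) - f(w) \;\ge\; \langle \nabla f(w), h\rangle + \tfrac{\sigma}{2}\|h\|^2 .
\]
The right-hand side is a strongly convex quadratic in $h$, so we bound it below by its unconstrained minimum over all $h\in\mathbb{R}^d$. That minimum is attained at $h^\star = -\nabla f(w)/\sigma$ and equals $-\tfrac{1}{2\sigma}\|\nabla f(w)\|^2$.

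The same bound can be reached by completing the square,
\[
\langle \nabla f(w), h\rangle + \tfrac{\sigma}{2}\|h\|^2 = \tfrac{\sigma}{2}\bigl\|h + \tfrac{1}{\sigma}\nabla f(w)\bigr\|^2 - \tfrac{1}{2\sigma}\|\nabla f(w)\|^2 \;\ge\; -\tfrac{1}{2\sigma}\|\nabla f(w)\|^2 ,
\]
or by Cauchy--Schwarz and Young's inequality, $\langle \nabla f(w),h\rangle \ge -\tfrac{1}{2\sigma}\|\nabla f(w)\|^2 - \tfrac{\sigma}{2}\|h\|^2$. Either way,
\[
f(y_k)-f(w) \;\ge\; -\tfrac{1}{2\sigma}\|\nabla f(w)\|^2 \;\ge\; -\tfrac{1}{\sigma}\|\nabla f(w)\|^2 ,
\]
which is the stated bound, weakened by a factor of two. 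The weaker constant is presumably kept because it is what enters the $\tfrac{2Q_t^2}{\sigma}\|\nabla f(w)\|^2$ slack in Proposition~\ref{prop:progress}.

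There is no real obstacle, only a scoping point. The hypothesis is assumed only at the iterates $y_k$ with $k=0,\dots,t-1$, so the conclusion holds only for those $y_k$, not for every $y_k\in\mathbb{R}^d$ as the phrase ``for all $w,y_k$'' might suggest. $w$ is fixed throughout, and the argument treats each $k$ separately.
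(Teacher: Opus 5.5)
Your proof is correct and is essentially the paper's argument. The paper also bounds the inner product by Young's inequality, but with weight $\sigma/2$, obtaining $f(y_k)-f(w)\ge -\tfrac{1}{\sigma}\|\nabla f(w)\|^2+\tfrac{\sigma}{4}\|y_k-w\|^2$ and then discarding the quadratic remainder; that same split reappears in the proof of Lemma~\ref{lemma-5}, where the remainder is actually needed, which is why the constant is $\tfrac{1}{\sigma}$ rather than your sharper $\tfrac{1}{2\sigma}$. Your scoping remark is also right: the conclusion holds only at the iterates $y_k$, $k=0,\dots,t-1$, where the hypothesis is assumed, not for arbitrary $y_k\in\mathbb{R}^d$.
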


\begin{proof}
By the pathwise strong convexity assumption,
\[
f(y_k)-f(w)\ge \langle \nabla f(w),y_k-w\rangle+\frac{\sigma}{2}\|y_k-w\|^2.
\]
Using Young's inequality, for any $\gamma>0$,
\[
\langle a,b\rangle \ge -\frac{1}{2\gamma}\|a\|^2-\frac{\gamma}{2}\|b\|^2,
\]
and choosing $\gamma=\frac{\sigma}{2}$ gives
\[
\langle \nabla f(w),y_k-w\rangle \ge -\frac{1}{\sigma}\|\nabla f(w)\|^2-\frac{\sigma}{4}\|y_k-w\|^2.
\]
Substituting into the previous inequality yields
\[
f(y_k)-f(w)
\ge -\frac{1}{\sigma}\|\nabla f(w)\|^2+\frac{\sigma}{4}\|y_k-w\|^2 \ge -\frac{1}{\sigma}\|\nabla f(w)\|^2.
\]
\end{proof}

With this lemma we can derive the following recursion for the auxiliary point with an additional slack term proportional to $\|\nabla f(w)\|^2$.

\begin{lemma}\label{lemma-5}
Let $f$ be $L_1$-smooth. Fix $w \in \mathbb{R}^d$ and assume that the pathwise strong convexity condition in \eqref{pathwise sc w} holds. Given initial points $x_0 = y_0$, if the estimates $L^{(k)}_1$ of the Lipschitz constant in accelerated gradient descent (Algorithm~\ref{alg:AGD}) are generated monotonically by adaptive backtracking (Algorithm~\ref{alg:ABLS}) with $c \in [1/2,1)$ and $L^{(0)}_1 > \sigma$, then for $k \ge 0$,
\begin{equation}
    V^{(w)}_{k+1}(y_{k}, x_{k})
    \leq \frac{Q_{k}^{3/2}}{Q_{k-1}^{3/2}}V^{(w)}_{k}(y_{k}, x_{k})+\frac{Q_{k}^{3/2}}{\sigma Q_{k-1}^{3/2}}\|\nabla f(w)\|^2.
    \label{eq:Vk-recursion}
\end{equation}

In particular, when $w = x^*$ is a minimizer of $f$, $\nabla f(w)=0$ and~\eqref{eq:Vk-recursion} reduces to
\[
V^{(x^*)}_{k+1}(y_k,x_k) \;\le\; \frac{Q_k^{3/2}}{Q_{k-1}^{3/2}} V^{(x^*)}_k(y_k,x_k).
\]
\end{lemma}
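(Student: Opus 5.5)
The plan is to redo the proof of Lemma~\ref{lem:app-AGD-Lyap-2} from \citet{cavalcanti2025adaptive} for a general reference point $w$. The only step there that uses optimality is where $f(y_k)-f(w)\ge 0$ and $\nabla f(w)=0$ are invoked; at that point we substitute Lemma~\ref{lower_bound}. The iterates $(y_k,x_k)$ stay fixed, and only the index of the auxiliary point changes, from $z_k$ (built with $\sqrt{Q_{k-1}}$) to $z_{k+1}$ (built with $\sqrt{Q_k}$). Write $a:=\sqrt{Q_{k-1}}$, $b:=\sqrt{Q_k}$, and $d:=x_k-y_k$. By monotonicity of the estimates, $b\ge a\ge 1$. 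Then
\[
z_{k+1}(y_k,x_k)-w=(z_k(y_k,x_k)-w)+(b-a)\,d .
\]

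First, control $\|z_{k+1}-w\|^2$. The identity $z_k-w=(1+a)(x_k-w)-a(y_k-w)$ lets us write
\[
z_{k+1}-w=\tfrac{b}{a}(z_k-w)+\bigl(1-\tfrac{b}{a}\bigr)(x_k-w),
\]
which is an extrapolation with coefficient $b/a\ge 1$. The cleaner route is the one in the cited lemma: express $z_{k+1}-w$ through $z_k-w$ and $y_k-w$ (or $x_k-w$), then use $\|p+q\|^2\le(1+\lambda)\|p\|^2+(1+1/\lambda)\|q\|^2$ with $\lambda$ chosen in terms of $b/a$. This gives
\[
\tfrac{\sigma}{2}\|z_{k+1}-w\|^2\le \tfrac{b^2}{a^2}\cdot\tfrac{\sigma}{2}\|z_k-w\|^2 + C(a,b)\,\tfrac{\sigma}{2}\|y_k-w\|^2 ,
\]
where $C(a,b)$ is of order $b^3/a^3-b^2/a^2$. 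The $\|y_k-w\|^2$ term is then absorbed using pathwise strong convexity~\eqref{pathwise sc w}. The strengthened form inside the proof of Lemma~\ref{lower_bound} gives
\[
\tfrac{\sigma}{4}\|y_k-w\|^2\le f(y_k)-f(w)+\tfrac{1}{\sigma}\|\nabla f(w)\|^2 ,
\]
so the squared distance is paid for by the objective gap plus the gradient slack.

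Second, collect terms. Since $b/a\ge1$, $\tfrac{b^2}{a^2}\le \tfrac{b^3}{a^3}=Q_k^{3/2}/Q_{k-1}^{3/2}$. The objective-gap coefficient will be $1+2C(a,b)$, which needs to be at most $Q_k^{3/2}/Q_{k-1}^{3/2}$; this may need the sharper constant from Young's inequality, and the exponent $3/2$ (rather than $2$) is where this accounting has to close. Whenever $f(y_k)-f(w)<0$, inflating its coefficient is not allowed. To handle this, first add $\tfrac1\sigma\|\nabla f(w)\|^2$ to make the gap nonnegative via Lemma~\ref{lower_bound}:
\[
f(y_k)-f(w)+\tfrac1\sigma\|\nabla f(w)\|^2\ge 0 .
\]
Then multiply the nonnegative quantity $V^{(w)}_k+\tfrac1\sigma\|\nabla f(w)\|^2$ by the ratio $r:=Q_k^{3/2}/Q_{k-1}^{3/2}\ge1$. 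This yields $V^{(w)}_{k+1}\le r V^{(w)}_k + \tfrac{r}{\sigma}\|\nabla f(w)\|^2$, which is exactly~\eqref{eq:Vk-recursion}; the sign issue is handled by shifting, and the additive term produced is precisely $r\|\nabla f(w)\|^2/\sigma$.

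The main obstacle is the distance bookkeeping in the first step. We need the cross-term coefficient to be bounded so that the total multiplier stays at $r=(b/a)^3$ after converting $\|y_k-w\|^2$ to function gap plus slack. The factor lost there, $\tfrac{\sigma}{4}$ instead of $\tfrac{\sigma}{2}$, is what I expect forces the exponent $3/2$. I would first try $\lambda=b/a-1$ and check that $\tfrac{b^2}{a^2}+(\text{cross}) \le \tfrac{b^3}{a^3}$ using $b\ge a\ge1$. The special case $w=x^*$ then follows immediately from $\nabla f(x^*)=0$.
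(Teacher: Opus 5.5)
You are not following the paper's route. The paper splits on the sign of $\langle x_k-y_k,x_k-w\rangle$. In the harder case it uses $\|y_k-w\|\ge\|x_k-w\|$, bounds $V^{(w)}_{k+1}-V^{(w)}_k$ by multiples of $V^{(w)}_{k+1}$, $V^{(w)}_k$ and $\|y_k-w\|^2$, and moves the $V^{(w)}_{k+1}$ term to the left. That step is where the factor $\sqrt{Q_k/Q_{k-1}}\cdot Q_k/Q_{k-1}$ comes from.

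A single case-free Young split followed by your shift by $\tfrac1\sigma\|\nabla f(w)\|^2$ can also work. However, the inequality you display as the outcome of the first step is false in the relevant regime, and everything hinges on it. In your notation, $z_{k+1}-w=\alpha(z_k-w)-\beta(y_k-w)$ with $\alpha=\tfrac{1+b}{1+a}$ and $\beta=\tfrac{b-a}{1+a}$. The bound $\|z_{k+1}-w\|^2\le\mu\|z_k-w\|^2+C\|y_k-w\|^2$ can hold for all vectors $z_k-w$, $y_k-w$ only if $C\ge\beta^2\mu/(\mu-\alpha^2)$, since Young's inequality with the optimal $\lambda$ is sharp. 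For $\mu=b^2/a^2$ this forces $C\ge\tfrac{(b-a)b^2}{a+b+2ab}\approx\tfrac{b-a}{2}$. That is not of order $\tfrac{b^3}{a^3}-\tfrac{b^2}{a^2}\approx\tfrac{b-a}{a}$, as you claim. Consequently $1+2C\le b^3/a^3$ fails once $a$ is moderately large: for $a=10$, $b=11$ you get $1+2C\approx 2.0$ against $r\approx 1.33$. Large $a=\sqrt{Q_{k-1}}$ is exactly the regime of interest. So, as written, your main step does not go through, and the proposal leaves precisely this verification open.

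The repair is to stop insisting on $b^2/a^2$ and let the $\|z_k-w\|^2$ coefficient rise to $r$. With your suggested $\lambda=b/a-1$ applied to the decomposition above, you get
\[
\|z_{k+1}-w\|^2\le\frac{b}{a}\Bigl(\frac{1+b}{1+a}\Bigr)^2\|z_k-w\|^2+\frac{b(b-a)}{(1+a)^2}\|y_k-w\|^2 .
\]
The first coefficient is at most $b^3/a^3$ because $\tfrac{1+b}{1+a}\le\tfrac ba$. It is always at least $b^2/a^2$, since $a(1+b)^2-b(1+a)^2=(b-a)(ab-1)\ge0$. For the second coefficient, $2C=\tfrac{2b(b-a)}{(1+a)^2}\le\tfrac{2b(b-a)}{a^2}\le\tfrac{(b-a)(a^2+ab+b^2)}{a^3}=r-1$.

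Now insert $\tfrac\sigma2\|y_k-w\|^2\le2\bigl(f(y_k)-f(w)\bigr)+\tfrac2\sigma\|\nabla f(w)\|^2$ and apply your shift with $G:=f(y_k)-f(w)+\tfrac1\sigma\|\nabla f(w)\|^2\ge0$. This gives $V^{(w)}_{k+1}(y_k,x_k)\le rV^{(w)}_k(y_k,x_k)+\tfrac{r-1}{\sigma}\|\nabla f(w)\|^2$, which is slightly sharper than the lemma and needs no case split. The case $b=a$ is trivial, since then $z_{k+1}=z_k$. With this correction your argument is a valid and somewhat cleaner alternative to the paper's two-case proof.

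Your explicit shift also handles a possibly negative $V^{(w)}_k$ more carefully than the paper's Case~1. There the paper replaces $\tfrac{Q_k}{Q_{k-1}}V^{(w)}_k$ by $\tfrac{Q_k^{3/2}}{Q_{k-1}^{3/2}}V^{(w)}_k$ without addressing the sign. That step is still true, but only after using $V^{(w)}_k\ge-\tfrac1\sigma\|\nabla f(w)\|^2$, as you do.
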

\begin{proof}
    We prove the bound for each $k\geq 0$ directly. The case $k=0$ follows from \(Q_{-1}:=Q_{0}\),
    \begin{align*}
        z_{1}(y_0,x_0)
        = x_{0} + \sqrt{Q_{0}}(x_{0} - y_{0})
        = x_{0}
        = x_{0} + \sqrt{Q_{-1}}(x_{0} - y_{0})
        = z_{0}(y_0,x_0).
    \end{align*}
    Moreover, we have that
    \begin{align*}
        V^{(w)}_{1}(y_{0}, x_{0})
        &= f(y_{0}) - f(w) + \frac{\sigma}{2}\Vert z_{1}(y_0,x_0) - w \Vert^{2}
        \\
        &= \frac{Q_{0}^{3/2}}{Q_{-1}^{3/2}}\Bigl(f(y_{0}) - f(w) + \frac{\sigma}{2}\Vert z_{0}(y_0,x_0) - w \Vert^{2}\Bigr)
        \\
        &= \frac{Q_{0}^{3/2}}{Q_{-1}^{3/2}}V^{(w)}_{0}(y_{0}, x_{0}),
    \end{align*}
    which establishes the base case.
    To prove the inductive step, we divide the analysis into two cases, each representing a possible sign of \(\langle x_{k} - y_{k}, x_{k} - w \rangle\). 
    For each case, we bound
    \begin{align}
        D_k :=
        &\,\Vert x_{k} - w + \sqrt{Q_{k}}(x_{k} - y_{k}) \Vert^{2}-\Vert z_k(y_k,x_k) - w \Vert^{2}
        \nonumber\\
        =&\ 2(\sqrt{Q_{k}}-\sqrt{Q_{k-1}})\langle x_{k} - w,x_{k} - y_{k} \rangle
        + (Q_{k}-Q_{k-1})\Vert x_{k} - y_{k} \Vert^{2}.
        \label{id:app-AGD-Lyap-ascent-2norm-gap}
    \end{align}
    In turn, bounds on \eqref{id:app-AGD-Lyap-ascent-2norm-gap} translate into bounds on \(V^{(w)}_{k+1}(y_{k}, x_{k})-V^{(w)}_{k}(y_{k}, x_{k})\), since
    \begin{align}
        V^{(w)}_{k+1}(y_{k}, x_{k})-V^{(w)}_{k}(y_{k}, x_{k})
        =&\ \frac{\sigma}{2}D_k.
        \label{ineq:app-AGD-Lyap-ascent-V-gap}
    \end{align}
    Then, to prove the inductive step, we express bounds on \eqref{ineq:app-AGD-Lyap-ascent-V-gap} in terms of \(V^{(w)}_{k+1}\) and \(V^{(w)}_{k}\).
    
    \paragraph{Case 1: \(\langle x_k - y_k, x_k - w\rangle \ge 0\).}
    Since \(L^{(k)}_1\geq L^{(k-1)}_1\), then \(\sqrt{Q_{k-1}}/\sqrt{Q_{k}}\leq 1\), so that
    \begin{align*}
        \sqrt{Q_{k}}-\sqrt{Q_{k-1}}
        \leq \frac{Q_{k}}{\sqrt{Q_{k}}} - \sqrt{Q_{k-1}}\frac{\sqrt{Q_{k-1}}}{\sqrt{Q_{k}}}
        = \frac{Q_{k}-Q_{k-1}}{\sqrt{Q_{k}}}.
    \end{align*}
    Hence, applying the inequality above to \eqref{id:app-AGD-Lyap-ascent-2norm-gap} and then adding a non-negative \(\Vert x_{k} - w \Vert^{2}\) term to it, we get
    \begin{align}
        D_k
        \nonumber
        \leq&\ 2\frac{Q_{k}-Q_{k-1}}{\sqrt{Q_{k}}}\langle x_{k} - w,x_{k} - y_{k} \rangle
        + (Q_{k}-Q_{k-1})\Vert x_{k} - y_{k} \Vert^{2}
        + \frac{Q_{k}-Q_{k-1}}{Q_{k}}\Vert x_{k} - w \Vert^{2}
        \nonumber\\
        =&\ \frac{Q_{k}-Q_{k-1}}{Q_{k}}\Vert x_{k} - w + \sqrt{Q_{k}}(x_{k} - y_{k}) \Vert^{2}.
        \label{ineq:app-AGD-Lyap-ascent-case1-aux}
    \end{align}
    Plugging \eqref{ineq:app-AGD-Lyap-ascent-case1-aux} back into \eqref{ineq:app-AGD-Lyap-ascent-V-gap} yields
    \begin{align}
        V^{(w)}_{k+1}(y_{k}, x_{k})-V^{(w)}_{k}(y_{k}, x_{k})
        &\leq \frac{Q_{k}-Q_{k-1}}{Q_{k}}\frac{\sigma}{2}\Vert x_{k} - w + \sqrt{Q_{k}}(x_{k} - y_{k}) \Vert^{2}
        \nonumber\\
        &\nonumber = \frac{Q_{k}-Q_{k-1}}{Q_{k}}
        \Bigl(V^{(w)}_{k+1}(y_{k}, x_{k})
        - (f(y_{k}) - f(w))\Bigr)\\
        &\leq \frac{Q_{k}-Q_{k-1}}{Q_{k}}V^{(w)}_{k+1}(y_{k}, x_{k}) + \frac{Q_{k}-Q_{k-1}}{\sigma Q_{k}}\|\nabla f(w)\|^2,
        \label{ineq:app-AGD-Lyap-ascent-case1-aux1}
    \end{align}
    where the last inequality follows from Lemma~\ref{lower_bound}, namely \(f(y_{k}) - f(w)\geq -\frac{1}{\sigma}\|\nabla f(w)\|^2\).
    Thus, rearranging terms in \eqref{ineq:app-AGD-Lyap-ascent-case1-aux1} and then multiplying both sides by \(Q_{k}/Q_{k-1}\), we obtain
    \begin{align*}
        V^{(w)}_{k+1}(y_{k}, x_{k})&
        \leq \frac{Q_{k}}{Q_{k-1}}V^{(w)}_{k}(y_{k}, x_{k})+ \frac{Q_{k}-Q_{k-1}}{\sigma Q_{k-1}}\|\nabla f(w)\|^2\\
        &\leq \frac{Q^{3/2}_{k}}{Q^{3/2}_{k-1}}V^{(w)}_{k}(y_{k}, x_{k})+ \frac{Q^{3/2}_{k}}{\sigma Q^{3/2}_{k-1}}\|\nabla f(w)\|^2.
    \end{align*}
    
    \paragraph{Case 2: \(\langle x_k - y_k, x_k - w\rangle < 0\).}
    As in the previous case, we start by bounding the gap \eqref{id:app-AGD-Lyap-ascent-2norm-gap}. The assumption gives
    \begin{equation}
    \|y_k - w\|^2 = \|x_k - w\|^2 - 2\langle x_k - w, x_k - y_k\rangle + \|x_k - y_k\|^2 \;\ge\; \|x_k - w\|^2.
    \label{ineq:app-AGD-Lyap-ascent-yk-ub}
    \end{equation}

    Splitting \(Q_k - Q_{k-1} = \sqrt{Q_k}(\sqrt{Q_k} - \sqrt{Q_{k-1}}) + \sqrt{Q_{k-1}}(\sqrt{Q_k} - \sqrt{Q_{k-1}})\) and adding/subtracting \((\sqrt{Q_k} - \sqrt{Q_{k-1}})/\sqrt{Q_k} \cdot \|x_k - w\|^2\), \eqref{id:app-AGD-Lyap-ascent-2norm-gap} can be rewritten as
    \begin{align}
    D_k
    =& \,2\frac{\sqrt{Q_{k}}-\sqrt{Q_{k-1}}}{\sqrt{Q_{k}}}\langle x_{k} - w,\sqrt{Q_{k}}(x_{k} - y_{k}) \rangle		
        + \sqrt{Q_{k}}(\sqrt{Q_{k}}-\sqrt{Q_{k-1}})\Vert x_{k} - y_{k} \Vert^{2}
        \nonumber\\
        &+ \sqrt{Q_{k-1}}(\sqrt{Q_{k}}-\sqrt{Q_{k-1}})\Vert x_{k} - y_{k} \Vert^{2}
        + (1-1) \frac{\sqrt{Q_{k}}-\sqrt{Q_{k-1}}}{\sqrt{Q_{k}}}\Vert x_{k} - w \Vert^{2}
        \nonumber\\
    =&\, \frac{\sqrt{Q_k} - \sqrt{Q_{k-1}}}{\sqrt{Q_k}}\,\big\| x_k - w + \sqrt{Q_k}(x_k - y_k)\big\|^2       \nonumber\\
    &\quad + \sqrt{Q_{k-1}}(\sqrt{Q_k} - \sqrt{Q_{k-1}})\,\|x_k - y_k\|^2
       - \frac{\sqrt{Q_k} - \sqrt{Q_{k-1}}}{\sqrt{Q_k}}\,\|x_k - w\|^2.
    \label{id:app-AGD-Lyap-ascent-aux1}
    \end{align}

    Next, we use the following elementary inequality, which is a consequence of \(\Vert a/\alpha + b\alpha \Vert^{2} \geq 0\):
    \begin{align*}
        \Vert a - b \Vert^{2}
        = \Vert a \Vert^{2} - 2\langle a, b \rangle + \Vert b \Vert^{2}
        \leq (1 + 1/\alpha^{2})\Vert a \Vert^{2} + (1 + \alpha^{2})\Vert b \Vert^{2}.
    \end{align*}

    Since \(z_k(y_k, x_k) - x_k = \sqrt{Q_{k-1}}(x_k - y_k)\), we have \(Q_{k-1}\|x_k - y_k\|^2 = \|z_k(y_k, x_k) - x_k\|^2 = \|(z_k(y_k, x_k) - w) - (x_k - w)\|^2\). Applying the inequality with \(a = z_k(y_k, x_k) - w\), \(b = x_k - w\), \(\alpha^2 = \sqrt{Q_{k-1}}/\sqrt{Q_k}\),
    \begin{align}
    Q_{k-1}\|x_k - y_k\|^2
    &\le \Big(1 + \frac{\sqrt{Q_k}}{\sqrt{Q_{k-1}}}\Big)\|z_k(y_k, x_k) - w\|^2
       + \Big(1 + \frac{\sqrt{Q_{k-1}}}{\sqrt{Q_k}}\Big)\|x_k - w\|^2 \notag\\
    &= \frac{\sqrt{Q_k} + \sqrt{Q_{k-1}}}{\sqrt{Q_{k-1}}}\,\|z_k(y_k, x_k) - w\|^2
       + \frac{\sqrt{Q_k} + \sqrt{Q_{k-1}}}{\sqrt{Q_k}}\,\|x_k - w\|^2.
    \label{eq:young-bound}
    \end{align}
    
    Multiplying~\eqref{eq:young-bound} by \((\sqrt{Q_k} - \sqrt{Q_{k-1}})/\sqrt{Q_{k-1}}\) and using
    \((\sqrt{Q_k} - \sqrt{Q_{k-1}})(\sqrt{Q_k} + \sqrt{Q_{k-1}}) = Q_k - Q_{k-1}\) yields
    \begin{equation}
    \begin{aligned}
    &\sqrt{Q_{k-1}}(\sqrt{Q_k} - \sqrt{Q_{k-1}})\,\|x_k - y_k\|^2 \\
    &\qquad\le\; \frac{Q_k - Q_{k-1}}{Q_{k-1}}\,\|z_k(y_k, x_k) - w\|^2
           + \frac{\sqrt{Q_k} - \sqrt{Q_{k-1}}}{\sqrt{Q_k}}\cdot\frac{\sqrt{Q_k} + \sqrt{Q_{k-1}}}{\sqrt{Q_{k-1}}}\,\|x_k - w\|^2.
    \end{aligned}
    \label{ineq:app-AGD-Lyap-ascent-aux1}
    \end{equation}

    Substituting~\eqref{ineq:app-AGD-Lyap-ascent-aux1} into~\eqref{id:app-AGD-Lyap-ascent-aux1} and applying~\eqref{ineq:app-AGD-Lyap-ascent-yk-ub},
    \begin{equation}
    \begin{aligned}
    D_k
    &\le \frac{\sqrt{Q_k} - \sqrt{Q_{k-1}}}{\sqrt{Q_k}}\,\big\| x_k - w + \sqrt{Q_k}(x_k - y_k)\big\|^2 \\
    &\quad + \frac{Q_k - Q_{k-1}}{Q_{k-1}}\,\|z_k - w\|^2
       + \frac{\sqrt{Q_k} - \sqrt{Q_{k-1}}}{\sqrt{Q_{k-1}}}\,\|y_k - w\|^2.
    \end{aligned}
    \label{ineq:app-AGD-Lyap-ascent-aux2}
    \end{equation}

    Combining~\eqref{ineq:app-AGD-Lyap-ascent-aux2} with~\eqref{ineq:app-AGD-Lyap-ascent-V-gap} gives
    \begin{equation}
    V_{k+1}^{(w)}(y_k, x_k) - V_k^{(w)}(y_k, x_k)
    \le \frac{\sigma}{2}\Big(
       A\| x_k - w + \sqrt{Q_k}(x_k - y_k)\|^2
     + B\|z_k(y_k, x_k) - w\|^2
     + C\|y_k - w\|^2 \Big),
    \label{ineq:app-AGD-Lyap-ascent-aux3}
    \end{equation}
    where \(A := (\sqrt{Q_k} - \sqrt{Q_{k-1}})/\sqrt{Q_k}\),
    \(B := (Q_k - Q_{k-1})/Q_{k-1}\),
    \(C := (\sqrt{Q_k} - \sqrt{Q_{k-1}})/\sqrt{Q_{k-1}}\).

    It remains to express the three norms in terms of the Lyapunov values. By Lemma~\ref{lower_bound},
    \begin{align}
    \frac{\sigma}{2}\,\| x_k - w + \sqrt{Q_k}(x_k - y_k)\|^2
    &\le V_{k+1}^{(w)}(y_k, x_k) + \frac{1}{\sigma}\|\nabla f(w)\|^2,
    \label{ineq:app-AGD-Lyap-ascent-lb2}\\
    \frac{\sigma}{2}\,\|z_k(y_k,x_k) - w\|^2
    &\le V_k^{(w)}(y_k, x_k) + \frac{1}{\sigma}\|\nabla f(w)\|^2.
    \label{ineq:app-AGD-Lyap-ascent-lb}
    \end{align}

    Applying pathwise strong convexity with \eqref{pathwise sc w}, we obtain
    \begin{align}
        V^{(w)}_{k}(y_{k}, x_{k})
        &= f(y_{k}) - f(w) + \frac{\sigma}{2}\Vert z_k(y_k,x_k) - w\Vert^{2}\\
        &\geq \frac{\sigma}{2}\Vert y_{k} - w\Vert^{2}+ \frac{\sigma}{2}\Vert z_k(y_k,x_k) - w\Vert^{2} + \langle\nabla f(w),y_k-w\rangle.
        \label{ineq:app-AGD-Lyap-ascent-lb3}
    \end{align}
    We can further lower bound the inner product using Young's inequality. Fix $\gamma = \sigma/2$ and use
    \[
    \langle a,b\rangle
    \;\ge\;
    -\frac{1}{2\gamma}\|a\|^2 - \frac{\gamma}{2}\|b\|^2
    = -\frac{1}{\sigma}\|a\|^2 - \frac{\sigma}{4}\|b\|^2.
    \]
    With $a=\nabla f(w)$, $b=y_k-w$ this gives
    \[
    \langle \nabla f(w),y_k-w\rangle
    \;\ge\;
    -\frac{1}{\sigma}\|\nabla f(w)\|^2 - \frac{\sigma}{4}\|y_k-w\|^2.
    \]

    Plugging in \eqref{ineq:app-AGD-Lyap-ascent-lb3} we get 
    \[
        V^{(w)}_{k}(y_{k}, x_{k})
        \geq \frac{\sigma}{4}\Vert y_{k} - w\Vert^{2}+ \frac{\sigma}{2}\Vert z_k(y_k,x_k) - w\Vert^{2} -\frac{1}{\sigma}\|\nabla f(w)\|^2
    \]
    so rearranging we obtain
    \begin{align}
    \frac{\sigma}{2}\Vert y_{k} - w\Vert^{2} \leq 2V^{(w)}_{k}(y_{k}, x_{k})  + \frac{2}{\sigma}\|\nabla f(w)\|^2 -\sigma\Vert z_k(y_k,x_k) - w\Vert^{2}
    \label{eq:25}
    \end{align}

    We begin by substituting \eqref{eq:25} into \eqref{ineq:app-AGD-Lyap-ascent-aux3}, yielding
    \begin{align}
    \nonumber V^{(w)}_{k+1}(y_k,x_k)-V^{(w)}_{k}(y_k,x_k)
    \le\;&
    \frac{\sqrt{Q_k}-\sqrt{Q_{k-1}}}{\sqrt{Q_k}}\frac{\sigma}{2}\|x_k-w+\sqrt{Q_k}(x_k-y_k)\|^2
    \\&\nonumber \quad +\Bigl(\frac{Q_k-Q_{k-1}}{Q_{k-1}}-2\frac{\sqrt{Q_k}-\sqrt{Q_{k-1}}}{\sqrt{Q_{k-1}}}\Bigr)\frac{\sigma}{2}\|z_k(y_k,x_k)-w\|^2 \nonumber\\
    &\quad
    +2\frac{\sqrt{Q_k}-\sqrt{Q_{k-1}}}{\sqrt{Q_{k-1}}}V^{(w)}_{k}(y_k,x_k)
    +\frac{2}{\sigma}\frac{\sqrt{Q_k}-\sqrt{Q_{k-1}}}{\sqrt{Q_{k-1}}}\|\nabla f(w)\|^2.
    \label{eq:after-eq25-sub}
    \end{align}

    Note that the new coefficient in front of $\frac{\sigma}{2}\|z_k(y_k,x_k)-w\|^2$ is non-negative:
    \begin{align*}
    \frac{Q_{k}-Q_{k-1}}{Q_{k-1}}-2\frac{\sqrt{Q_{k}}-\sqrt{Q_{k-1}}}{\sqrt{Q_{k-1}}} = \Bigl(\frac{\sqrt{Q_k}}{\sqrt{Q_{k-1}}}-1\Bigr)^2 \geq 0.
    \end{align*}
    
    Hence we may apply \eqref{ineq:app-AGD-Lyap-ascent-lb2} and \eqref{ineq:app-AGD-Lyap-ascent-lb} to upper bound the remaining normed terms:
    \[
    \begin{aligned}
    \frac{\sigma}{2}\|x_k-w+\sqrt{Q_k}(x_k-y_k)\|^2
    &\le V^{(w)}_{k+1}(y_k,x_k)+\frac{1}{\sigma}\|\nabla f(w)\|^2, \\
    \frac{\sigma}{2}\|z_k(y_k,x_k)-w\|^2
    &\le V^{(w)}_{k}(y_k,x_k)+\frac{1}{\sigma}\|\nabla f(w)\|^2.
    \end{aligned}
    \]
    
    Plugging these into \eqref{eq:after-eq25-sub} gives
    \begin{align}
    V^{(w)}_{k+1}(y_k,x_k)-V^{(w)}_{k}(y_k,x_k)
    \le\;&
    \frac{\sqrt{Q_k}-\sqrt{Q_{k-1}}}{\sqrt{Q_k}}
    \Bigl(V^{(w)}_{k+1}(y_k,x_k)+\frac{1}{\sigma}\|\nabla f(w)\|^2\Bigr) \nonumber\\
    &\!
    +\Bigl(\frac{Q_k-Q_{k-1}}{Q_{k-1}}-2\frac{\sqrt{Q_k}-\sqrt{Q_{k-1}}}{\sqrt{Q_{k-1}}}\Bigr)
    \Bigl(V^{(w)}_{k}(y_k,x_k)+\frac{1}{\sigma}\|\nabla f(w)\|^2\Bigr) \nonumber\\
    &\!
    +2\frac{\sqrt{Q_k}-\sqrt{Q_{k-1}}}{\sqrt{Q_{k-1}}}V^{(w)}_{k}(y_k,x_k)
    +\frac{2}{\sigma}\frac{\sqrt{Q_k}-\sqrt{Q_{k-1}}}{\sqrt{Q_{k-1}}}\|\nabla f(w)\|^2.
    \label{eq:before-cancel}
    \end{align}
    
    Now the cancellation is explicit on the $V^{(w)}_{k}(y_k,x_k)$ terms:
    \begin{align*}
    \Bigl(
        \frac{Q_k-Q_{k-1}}{Q_{k-1}}
        -2\frac{\sqrt{Q_k}-\sqrt{Q_{k-1}}}{\sqrt{Q_{k-1}}}
        +2\frac{\sqrt{Q_k}-\sqrt{Q_{k-1}}}{\sqrt{Q_{k-1}}}
    \Bigr)
    V^{(w)}_{k}(y_k,x_k)
    &=
    \frac{Q_k-Q_{k-1}}{Q_{k-1}}V^{(w)}_{k}(y_k,x_k).
    \end{align*}

    Similarly, the $\frac{1}{\sigma}\|\nabla f(w)\|^2$ coefficients combine as
    \begin{align*}
    &\frac{\sqrt{Q_k}-\sqrt{Q_{k-1}}}{\sqrt{Q_k}}
    +\Bigl(
        \frac{Q_k-Q_{k-1}}{Q_{k-1}}
        -2\frac{\sqrt{Q_k}-\sqrt{Q_{k-1}}}{\sqrt{Q_{k-1}}}
    \Bigr)
    +2\frac{\sqrt{Q_k}-\sqrt{Q_{k-1}}}{\sqrt{Q_{k-1}}} \\
    &\qquad=
    \frac{\sqrt{Q_k}-\sqrt{Q_{k-1}}}{\sqrt{Q_k}}
    +\frac{Q_k-Q_{k-1}}{Q_{k-1}}.
    \end{align*}

    Therefore \eqref{eq:before-cancel} simplifies to
    \begin{align*}
    V^{(w)}_{k+1}(y_k,x_k)-V^{(w)}_{k}(y_k,x_k)
    \le\;&
    \frac{\sqrt{Q_k}-\sqrt{Q_{k-1}}}{\sqrt{Q_k}}\,V^{(w)}_{k+1}(y_k,x_k)
    +\frac{Q_k-Q_{k-1}}{Q_{k-1}}\,V^{(w)}_{k}(y_k,x_k)\\
    &+\frac{1}{\sigma}\Bigl(\frac{\sqrt{Q_k}-\sqrt{Q_{k-1}}}{\sqrt{Q_k}}+\frac{Q_k-Q_{k-1}}{Q_{k-1}}\Bigr)\|\nabla f(w)\|^2.
    \end{align*}
    
    Moving all \(V^{(w)}_{k+1}(y_{k}, x_{k})\) terms to the left-hand side and all \(V^{(w)}_{k}(y_{k}, x_{k})\) to the right-hand side, we obtain
    \begin{align}
        \frac{\sqrt{Q_{k-1}}}{\sqrt{Q_{k}}}V^{(w)}_{k+1}(y_{k}, x_{k})
        \leq&\ \frac{Q_{k}}{Q_{k-1}}V^{(w)}_{k}(y_{k}, x_{k}) + C_k\|\nabla f(w)\|^2,
        \label{ineq:app-AGD-Lyap-ascent-aux4}
    \end{align}
    where \begin{equation*}
    C_k = \frac{\sqrt{Q_{k}}-\sqrt{Q_{k-1}}}{\sigma\sqrt{Q_{k}}} +\frac{Q_{k}-Q_{k-1}}{\sigma Q_{k-1}}=\frac{1}{\sigma}\Bigl(\frac{Q_{k}}{Q_{k-1}}-\frac{\sqrt{Q_{k-1}}}{\sqrt{Q_{k}}}\Bigr) \leq \frac{Q_{k}}{\sigma Q_{k-1}}
    \end{equation*}
    
    Multiplying both sides of \eqref{ineq:app-AGD-Lyap-ascent-aux4} by \(\sqrt{Q_{k}}/\sqrt{Q_{k-1}}\), and then using the fact that \(\sqrt{Q_{k}}\geq 0\) yields
    \begin{align*}
        V^{(w)}_{k+1}(y_{k}, x_{k})
        &\leq \frac{\sqrt{Q}_{k}}{\sqrt{Q_{k-1}}}\frac{Q_{k}}{Q_{k-1}} V^{(w)}_{k}(y_{k}, x_{k}) +\frac{\sqrt{Q}_{k}}{\sqrt{Q_{k-1}}}C_k\|\nabla f(w)\|^2\\
        &\leq \frac{Q_{k}^{3/2}}{Q_{k-1}^{3/2}}V^{(w)}_{k}(y_{k}, x_{k})+\frac{Q_{k}^{3/2}}{\sigma Q_{k-1}^{3/2}}\|\nabla f(w)\|^2
    \end{align*}
    Therefore, both when \(\langle x_{k} - w,x_{k} - y_{k} \rangle\geq 0\) and when \(\langle x_{k} - w,x_{k} - y_{k} \rangle< 0\), the inequality
    \begin{align*}
        V^{(w)}_{k+1}(y_{k}, x_{k})
        &\leq \frac{Q_{k}^{3/2}}{Q_{k-1}^{3/2}}V^{(w)}_{k}(y_{k}, x_{k})+\frac{Q_{k}^{3/2}}{\sigma Q_{k-1}^{3/2}}\|\nabla f(w)\|^2
    \end{align*}
    holds generically for all \(y_{k}, x_{k}\), proving the lemma.
\end{proof}

\begin{figure}[t]
\centering
\begin{tikzpicture}
  \begin{axis}[
    width=0.8\linewidth, height=6cm,
    axis lines=middle,
    xlabel={$x$}, ylabel={$f(x)$},
    grid=both,
    domain=-4:4,
    samples=400,
  ]
    \addplot[thick] {0.5*x^2 + ln(cosh(x))};
  \end{axis}
\end{tikzpicture}
\caption{$f(x)=\tfrac12 x^2+\log(\cosh (x))$.}
\label{fig:counter}
\end{figure}

\subsubsection{Necessity of the Slack Term}\label{app:counter_example}
The $\|\nabla f(w)\|^2$ term cannot be removed in general. The following example (Figure \ref{fig:counter}) shows this slack is tight.
\[
f(x)=\frac{1}{2}x^2 + \log(\cosh(x)),\qquad \sigma=1,\qquad L_1=2,
\]
initialized with $x_0=y_0=2$, $L_1^{(0)}=1$, $\alpha_0=1$, backtracking parameters $c=0.5$, $\rho = 0.8$, and comparison point $w=-1$.

At $k=0$, we have $\nabla f(2) = 2+\tanh(2)\approx 2.964$, giving initial Lyapunov value
\[
V_1^{(-1)}(y_0,x_0) = (3.325 - 0.934) + \frac{1}{2}(3)^2 = 6.891.
\]
The backtracking check yields $v(1)\approx 0.559 < 1$, so the Lipschitz estimate is updated to
$L_1^{(1)}\approx 1.802$, with growth factor $R=(1.802)^{3/2}\approx 2.419$.
The next iterates are
\[
y_1 \approx 0.355, \qquad x_1 \approx 0.115,
\]
and since $f(y_1)-f(w)\approx -0.809$, the Lyapunov values at $(y_1,x_1)$ are
\[
V_1^{(-1)}(y_1,x_1) = -0.809 + \frac{1}{2}(0.875)^2 \approx -0.426,\quad
V_2^{(-1)}(y_1,x_1) = -0.809 + \frac{1}{2}(0.793)^2 \approx -0.495.
\]
The recursion without the gradient term would require
\[
V_2^{(-1)}(y_1,x_1)\;\le\; R\cdot V_1^{(-1)}(y_1,x_1),
\qquad\text{i.e.,}\qquad
-0.495 \le 2.419\times(-0.426)\approx -1.033,
\]
which is false.

\subsubsection{Lyapunov Recursion for Descent Reference Points}\label{sec:lyap-descent}
Under the additional descent condition that \(w\) lies below \(y_k\) by at least one gradient step, we strengthen Lemma~\ref{lower_bound} and obtain a tighter recursion without the \(\|\nabla f(w)\|^2\) slack.
\begin{lemma}
\label{lemma: optimality_bound}
Let $f:\mathbb{R}^d\to\mathbb{R}$ be differentiable. Let $y_k, w \in \mathbb{R}^d$ be points such that the following descent condition holds for some $L^{(k)}_1 > 0$:
\begin{equation}\label{eq:descent_gap}
f(w) \le f(y_k) - \frac{1}{2L^{(k)}_1} \|\nabla f(y_k)\|^2.
\end{equation}
Assume that for all $k = 0,\dots,t-1$ we have
\begin{equation}
f(w) \ge f(y_k) + \nabla f(y_k)^\top (w - y_k) + \frac{\sigma}{2}\lVert w - y_k \rVert^2.
\label{pathwise sc yk}
\end{equation}

Then the squared distance between $y_k$ and $w$ is bounded by the function gap as follows:
\begin{equation*}
\frac{\sigma}{2} \|y_k - w\|^2 \le 4 Q_k (f(y_k) - f(w)).
\end{equation*}
\end{lemma}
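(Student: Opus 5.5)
The plan is to combine the strong-convexity inequality \eqref{pathwise sc yk} at the pair $(w,y_k)$ with the descent condition \eqref{eq:descent_gap}. The descent condition does two jobs: it makes the function gap nonnegative, and it bounds $\|\nabla f(y_k)\|$ by that gap. Write $g := \nabla f(y_k)$, $r := \|w-y_k\|$ and $\Delta := f(y_k)-f(w)$. Throughout, $Q_k = L^{(k)}_1/\sigma$, with $L^{(k)}_1$ the constant appearing in \eqref{eq:descent_gap}.

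\textbf{Step 1 (gap versus gradient).} Rearranging \eqref{eq:descent_gap} gives $\Delta \ge \frac{1}{2L^{(k)}_1}\|g\|^2 \ge 0$. Equivalently,
\begin{equation*}
\|g\| \le \sqrt{2L^{(k)}_1\,\Delta}.
\end{equation*}
This is the step that replaces the unavailable facts $\nabla f(x^*)=0$ and $f(y_k)\ge f(x^*)$ used in the minimizer-based analysis.

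\textbf{Step 2 (strong convexity plus Cauchy--Schwarz).} Rearranging \eqref{pathwise sc yk} gives $\frac{\sigma}{2}r^2 \le f(w)-f(y_k) - \langle g, w-y_k\rangle$. Bounding the inner product by Cauchy--Schwarz yields
\begin{equation*}
\frac{\sigma}{2}r^2 + \Delta \le \|g\|\,r .
\end{equation*}

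\textbf{Step 3 (solve for $r$).} Drop the nonnegative $\Delta$ on the left (Step 1) and insert the gradient bound from Step 1:
\begin{equation*}
\frac{\sigma}{2} r^2 \le \sqrt{2L^{(k)}_1\Delta}\; r .
\end{equation*}
If $r=0$ the claim is trivial. Otherwise, divide by $r$ to get $r \le \frac{2}{\sigma}\sqrt{2L^{(k)}_1\Delta}$, and hence $r^2 \le 8L^{(k)}_1\Delta/\sigma^2$. Multiplying by $\sigma/2$ gives
\begin{equation*}
\frac{\sigma}{2}r^2 \le \frac{4L^{(k)}_1}{\sigma}\,\Delta = 4Q_k\,\Delta,
\end{equation*}
which is exactly the stated bound.

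The main point needing care is Step 1. It requires the same index $k$, and hence the same estimate $L^{(k)}_1$, to appear both in the descent condition and in $Q_k$; that matching is what produces the factor $Q_k$ rather than a ratio of estimates. Beyond this, the argument is elementary. One could tighten the constant by keeping $\Delta$ on the left in Step 2 (an AM--GM argument), but the factor $4$ already follows from the crude route above.
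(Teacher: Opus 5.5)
Your proof is correct and follows the same route as the paper. The descent condition gives $f(y_k)-f(w)\ge 0$ and $\|\nabla f(y_k)\|^2 \le 2L_1^{(k)}(f(y_k)-f(w))$; pathwise strong convexity with Cauchy--Schwarz gives $\|y_k-w\| \le \frac{2}{\sigma}\|\nabla f(y_k)\|$, and squaring and substituting produces the factor $4Q_k$. The only differences are cosmetic: you insert the gradient bound before dividing by $\|y_k-w\|$, and you explicitly handle the case $y_k = w$, which the paper leaves implicit.
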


\begin{proof}
From the descent assumption \eqref{eq:descent_gap}, we can rearrange terms to obtain:
\begin{equation*}
\|\nabla f(y_k)\|^2 \le 2L^{(k)}_1 (f(y_k) - f(w)).
\end{equation*}

By pathwise strong convexity \eqref{pathwise sc yk}, we have:
\begin{align*}
f(y_k) - f(w) &\le \langle \nabla f(y_k), y_k - w \rangle - \frac{\sigma}{2} \|y_k - w\|^2.
\end{align*}
Applying the Cauchy-Schwarz inequality ($\langle a, b \rangle \le \|a\| \|b\|$) to the inner product term:
\begin{equation}
\label{eq:bound_after_cauchy}
f(y_k) - f(w) \le \|\nabla f(y_k)\| \|y_k - w\| - \frac{\sigma}{2} \|y_k - w\|^2
\end{equation}

Since $f(y_k) - f(w) \ge 0$ by assumption, the right-hand side of (\ref{eq:bound_after_cauchy}) must be non-negative. Thus:
\begin{align*}
\|\nabla f(y_k)\| \|y_k - w\| &\ge \frac{\sigma}{2} \|y_k - w\|^2 \\
\|y_k - w\| &\le \frac{2}{\sigma} \|\nabla f(y_k)\|.
\end{align*}
Squaring both sides yields:
\begin{equation*}
\|y_k - w\|^2 \le \frac{4}{\sigma^2} \|\nabla f(y_k)\|^2.
\end{equation*}

Substituting the bound for $\|\nabla f(y_k)\|^2$ yields
\begin{equation*}
\|y_k - w\|^2 \le \frac{4}{\sigma^2} \left[ 2L^{(k)}_1 (f(y_k) - f(w)) \right] = \frac{8L^{(k)}_1}{\sigma^2} (f(y_k) - f(w)).
\end{equation*}
Multiplying both sides by $\frac{\sigma}{2}$ and using the definition $Q_k = L^{(k)}_1/\sigma$:
\begin{equation*}
\frac{\sigma}{2} \|y_k - w\|^2 \le 4 \frac{L^{(k)}_1}{\sigma} (f(y_k) - f(w)) = 4 Q_k (f(y_k) - f(w)).
\end{equation*}
\end{proof}

Applying the lemma above, we derive a recursion analogous to Lemma~\ref{lem:app-AGD-Lyap-2} that eliminates the $\|\nabla f(w)\|^2$ term at the cost of an additional $(1+2\Delta_k)$ factor.

\begin{lemma}\label{lemma-6}
In addition to Lemma~\ref{lemma-5}, let w be such that $f(w)\leq f(y_k)-\frac{1}{2L^{(k)}_1}\|\nabla f(y_k)\|^2$. Assume that the pathwise strong convexity condition in \eqref{pathwise sc yk} holds. Then
\begin{equation*}
    V^{(w)}_{k+1}(y_{k}, x_{k})
    \leq (1+2\Delta_k)\frac{Q^{3/2}_k}{Q^{3/2}_{k-1}}V^{(w)}_{k}(y_{k}, x_{k}),
\end{equation*}
where $\Delta_k:= Q_k-Q_{k-1}\geq0$.

\end{lemma}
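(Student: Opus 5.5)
The plan is to rerun the two-case argument of Lemma~\ref{lemma-5} for the fixed reference point $w$. The gain comes from the descent hypothesis $f(w)\le f(y_k)-\frac{1}{2L_1^{(k)}}\|\nabla f(y_k)\|^2$: it gives $f(y_k)-f(w)\ge 0$. Hence every place where Lemma~\ref{lower_bound} was used to absorb a negative function gap, at the cost of $\frac1\sigma\|\nabla f(w)\|^2$, can now be handled by simply dropping a nonnegative term. In particular $V_k^{(w)}(y_k,x_k)\ge 0$ and $V_{k+1}^{(w)}(y_k,x_k)\ge 0$. Moreover
\[
\tfrac{\sigma}{2}\|x_k-w+\sqrt{Q_k}(x_k-y_k)\|^2\le V_{k+1}^{(w)}(y_k,x_k),\qquad
\tfrac{\sigma}{2}\|z_k(y_k,x_k)-w\|^2\le V_k^{(w)}(y_k,x_k).
\]
The base case $k=0$ is identical to that of Lemma~\ref{lemma-5}, since $z_1(y_0,x_0)=z_0(y_0,x_0)$, and $1+2\Delta_0\ge 1$.

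In Case 1, where $\langle x_k-y_k,x_k-w\rangle\ge0$, the same computation as before gives
\[
V_{k+1}^{(w)}-V_k^{(w)}\le \tfrac{Q_k-Q_{k-1}}{Q_k}\bigl(V_{k+1}^{(w)}-(f(y_k)-f(w))\bigr)\le \tfrac{Q_k-Q_{k-1}}{Q_k}V_{k+1}^{(w)},
\]
so $V_{k+1}^{(w)}\le \frac{Q_k}{Q_{k-1}}V_k^{(w)}$. This is already below the claimed bound, because $Q_k\ge Q_{k-1}$ by monotonicity and $V_k^{(w)}\ge0$.

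In Case 2, where $\langle x_k-y_k,x_k-w\rangle<0$, I will start from \eqref{ineq:app-AGD-Lyap-ascent-aux3}, which reads $V_{k+1}^{(w)}-V_k^{(w)}\le\frac\sigma2(A\|\cdot\|^2+B\|z_k-w\|^2+C\|y_k-w\|^2)$. The first two norms are bounded by $V_{k+1}^{(w)}$ and $V_k^{(w)}$ as above. For the troublesome $C\|y_k-w\|^2$ term I will not use \eqref{eq:25}, which is where the slack originated. Instead I invoke Lemma~\ref{lemma: optimality_bound}; its hypotheses are exactly the descent condition and \eqref{pathwise sc yk}. It gives
\[
\tfrac\sigma2\|y_k-w\|^2\le 4Q_k(f(y_k)-f(w))\le 4Q_kV_k^{(w)}.
\]
Collecting terms yields $(1-A)V_{k+1}^{(w)}\le (1+B+4Q_kC)V_k^{(w)}$, with $1-A=\sqrt{Q_{k-1}}/\sqrt{Q_k}$ and $1+B=Q_k/Q_{k-1}$.

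The step I expect to need care is the coefficient bound $4Q_kC\le 2\Delta_k Q_k/Q_{k-1}$. Writing
\[
C=\frac{\sqrt{Q_k}-\sqrt{Q_{k-1}}}{\sqrt{Q_{k-1}}}=\frac{\Delta_k}{\sqrt{Q_{k-1}}(\sqrt{Q_k}+\sqrt{Q_{k-1}})}
\]
and using $\sqrt{Q_{k-1}}(\sqrt{Q_k}+\sqrt{Q_{k-1}})\ge 2Q_{k-1}$ gives exactly this. Hence $1+B+4Q_kC\le(1+2\Delta_k)Q_k/Q_{k-1}$. Multiplying through by $\sqrt{Q_k}/\sqrt{Q_{k-1}}$ then gives $V_{k+1}^{(w)}\le(1+2\Delta_k)\frac{Q_k^{3/2}}{Q_{k-1}^{3/2}}V_k^{(w)}$ in both cases.
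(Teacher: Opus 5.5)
Your proposal is correct and follows essentially the same argument as the paper. Both use the two-case split from Lemma~\ref{lemma-5}, nonnegativity of $f(y_k)-f(w)$ in Case 1, and Lemma~\ref{lemma: optimality_bound} for the $\|y_k-w\|^2$ term in Case 2. Your direct bound $C\le \Delta_k/(2Q_{k-1})$ is equivalent to the paper's computation via $\phi_k=\sqrt{Q_k/Q_{k-1}}$, which yields the factor $1+4\Delta_k/(\phi_k+1)\le 1+2\Delta_k$.
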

\begin{proof}
    We prove the bound for each $k\geq 0$ directly. The case $k=0$ follows from \(Q_{-1}:=Q_{0}\),
    \begin{align*}
        z_{1}(y_0,x_0)
        = x_{0} + \sqrt{Q_{0}}(x_{0} - y_{0})
        = x_{0}
        = x_{0} + \sqrt{Q_{-1}}(x_{0} - y_{0})
        = z_{0}(y_0,x_0).
    \end{align*}
    Moreover, we have that
    \begin{align*}
        V^{(w)}_{1}(y_{0}, x_{0})
        &= f(y_{0}) - f(w) + \frac{\sigma}{2}\Vert z_{1}(y_0,x_0) - w \Vert^{2}
        \\
        &= (1+2\Delta_0)\frac{Q_{0}^{3/2}}{Q_{-1}^{3/2}}\Bigl(f(y_{0}) - f(w) + \frac{\sigma}{2}\Vert z_{0}(y_0,x_0) - w \Vert^{2}\Bigr)
        \\
        &= (1+2\Delta_0)\frac{Q_{0}^{3/2}}{Q_{-1}^{3/2}}V^{(w)}_{0}(y_{0}, x_{0}),
    \end{align*}
    where $\Delta_0=0$, establishing the base case. To prove the inductive step, we again divide the analysis in two cases. For each case, we bound
    \begin{align}
        D_k :=
        &\,\Vert x_{k} - w + \sqrt{Q_{k}}(x_{k} - y_{k}) \Vert^{2}-\Vert z_k(y_k,x_k) - w \Vert^{2}
        \nonumber\\
        =&\ 2(\sqrt{Q_{k}}-\sqrt{Q_{k-1}})\langle x_{k} - w,x_{k} - y_{k} \rangle
        + (Q_{k}-Q_{k-1})\Vert x_{k} - y_{k} \Vert^{2}.
        \label{id:app-AGD-Lyap-ascent-2norm-gap'}
    \end{align}
    In turn, bounds on \eqref{id:app-AGD-Lyap-ascent-2norm-gap'} translate into bounds on \(V^{(w)}_{k+1}(y_{k}, x_{k})-V^{(w)}_{k}(y_{k}, x_{k})\), since
    \begin{align}
        V^{(w)}_{k+1}(y_{k}, x_{k})-V^{(w)}_{k}(y_{k}, x_{k})
        =&\ \frac{\sigma}{2}D_k.
        \label{ineq:app-AGD-Lyap-ascent-V-gap'}
    \end{align}
    Then, to prove the inductive step, we express bounds on \eqref{ineq:app-AGD-Lyap-ascent-V-gap'} in terms of \(V^{(w)}_{k+1}\) and \(V^{(w)}_{k}\).
    
    \paragraph{Case 1: \(\langle x_k - y_k, x_k - w\rangle \ge 0\).} 
    This time plugging \eqref{ineq:app-AGD-Lyap-ascent-case1-aux} back into \eqref{ineq:app-AGD-Lyap-ascent-V-gap} from Lemma~\ref{lemma-5} yields
    \begin{align}
        V^{(w)}_{k+1}(y_{k}, x_{k})-V^{(w)}_{k}(y_{k}, x_{k})
        &\leq \frac{Q_{k}-Q_{k-1}}{Q_{k}}\frac{\sigma}{2}\Vert x_{k} - w + \sqrt{Q_{k}}(x_{k} - y_{k}) \Vert^{2}
        \nonumber\\
        &\leq \frac{Q_{k}-Q_{k-1}}{Q_{k}}V^{(w)}_{k+1}(y_{k}, x_{k}),
        \label{ineq:app-AGD-Lyap-ascent-case1-aux1'}
    \end{align}
    where the last inequality follows from the definition of \(V^{(w)}_{k}\), as \(f(y_{k}) - f(w)\geq 0\) implies
    \begin{align}
        V^{(w)}_{k+1}(y_{k}, x_{k})
        \geq \frac{\sigma}{2}\Vert x_{k} - w + \sqrt{Q_{k}}(x_{k} - y_{k}) \Vert^{2}.
        \label{ineq:app-AGD-Lyap-ascent-lb'}
    \end{align}
    Thus, rearranging terms in \eqref{ineq:app-AGD-Lyap-ascent-case1-aux1'} and multiplying both sides by \(Q_{k}/Q_{k-1}\), we obtain
    \begin{align*}
        V^{(w)}_{k+1}(y_{k}, x_{k})
        \leq \frac{Q_{k}}{Q_{k-1}}V^{(w)}_{k}(y_{k}, x_{k})
        \leq (1+2\Delta_k)\frac{Q_{k}^{3/2}}{Q_{k-1}^{3/2}}V^{(w)}_{k}(y_{k}, x_{k}),
    \end{align*}
    where the second inequality holds because \(Q_{k}/Q_{k-1}\geq 1\) and $\Delta_k\geq0$.
    
    \paragraph{Case 2: \(\langle x_k - y_k, x_k - w\rangle < 0\).}
    From \eqref{ineq:app-AGD-Lyap-ascent-aux3} in Lemma~\ref{lemma-5} we have
    \begin{equation}
    V_{k+1}^{(w)}(y_k, x_k) - V_k^{(w)}(y_k, x_k)
    \le \frac{\sigma}{2}\Big(
       A\| x_k - w + \sqrt{Q_k}(x_k - y_k)\|^2
     + B\|z_k(y_k, x_k) - w\|^2
     + C\|y_k - w\|^2 \Big),
    \label{ineq:app-AGD-Lyap-ascent-aux3'}
    \end{equation}
    where \(A := (\sqrt{Q_k} - \sqrt{Q_{k-1}})/\sqrt{Q_k}\),
    \(B := (Q_k - Q_{k-1})/Q_{k-1}\),
    \(C := (\sqrt{Q_k} - \sqrt{Q_{k-1}})/\sqrt{Q_{k-1}}\).

    Now, as in \eqref{ineq:app-AGD-Lyap-ascent-lb'}, the fact that \(f(y_{k}) - f(w)\geq 0\) implies
    \begin{align}
        V^{(w)}_{k}(y_{k}, x_{k})
        = f(y_{k}) - f(w) + \frac{\sigma}{2}\Vert z_k(y_k,x_k) - w\Vert^{2}
        \geq \frac{\sigma}{2}\Vert z_k(y_k,x_k) - w\Vert^{2}.
        \label{ineq:app-AGD-Lyap-ascent-lb2'}
    \end{align}
    Applying Lemma~\ref{lemma: optimality_bound}, we obtain
    \begin{align}
        4 Q_k V^{(w)}_{k}(y_{k}, x_{k})
        \geq 4 Q_k (f(y_k) - f(w))
        \geq \frac{\sigma}{2}\Vert y_{k} - w\Vert^{2}.
        \label{ineq:app-AGD-Lyap-ascent-lb3'}
    \end{align}
    Plugging in \eqref{ineq:app-AGD-Lyap-ascent-lb'}, \eqref{ineq:app-AGD-Lyap-ascent-lb2'} and \eqref{ineq:app-AGD-Lyap-ascent-lb3'} back into \eqref{ineq:app-AGD-Lyap-ascent-aux3'}, and then moving all \(V^{(w)}_{k+1}(y_{k}, x_{k})\) terms to the left-hand side and all \(V^{(w)}_{k}(y_{k}, x_{k})\) to the right-hand side, we obtain
    \begin{align}
        \frac{\sqrt{Q_{k-1}}}{\sqrt{Q_{k}}}V^{(w)}_{k+1}(y_{k}, x_{k})
        \leq&\ \Bigl( \frac{Q_{k}}{Q_{k-1}} + 4Q_k\frac{\sqrt{Q_{k}}-\sqrt{Q_{k-1}}}{\sqrt{Q_{k-1}}} \Bigr)V^{(w)}_{k}(y_{k}, x_{k})
        \label{ineq:app-AGD-Lyap-ascent-aux4'}
    \end{align}
    Multiplying both sides of \eqref{ineq:app-AGD-Lyap-ascent-aux4'} by \(\sqrt{Q_{k}}/\sqrt{Q_{k-1}}\), and using the fact that \(\sqrt{Q_{k}}\geq \sqrt{Q_{k-1}}\) yields
    \begin{align*}
        V^{(w)}_{k+1}(y_{k}, x_{k})
        &\leq \frac{\sqrt{Q}_{k}}{\sqrt{Q_{k-1}}}\Bigl(\frac{Q_{k}}{Q_{k-1}} + 4Q_k\frac{\sqrt{Q_{k}}-\sqrt{Q_{k-1}}}{\sqrt{Q_{k-1}}} \Bigr)V^{(w)}_{k}(y_{k}, x_{k})\\
        &= \Bigl(1+\frac{4\Delta_k}{\phi_k+1}\Bigr)\frac{Q^{3/2}_k}{Q^{3/2}_{k-1}}V^{(w)}_{k}(y_{k}, x_{k})\\
        &\leq (1+2\Delta_k)\frac{Q^{3/2}_k}{Q^{3/2}_{k-1}}V^{(w)}_{k}(y_{k}, x_{k}),
    \end{align*}
    where $\Delta_k :=Q_k-Q_{k-1}\geq 0$ and $\phi_k = \sqrt{Q_k/Q_{k-1}}\geq1$.\\
    
    The equality holds because 
    \[
    \frac{\sqrt{Q_k}}{\sqrt{Q_{k-1}}}\Bigl(\frac{Q_k}{Q_{k-1}}+4Q_k\frac{\sqrt{Q_k}-\sqrt{Q_{k-1}}}{\sqrt{Q_{k-1}}}\Bigr)
    = \phi_k\Bigl(\phi_k^2 + 4Q_k(\phi_k-1)\Bigr).
    \]
    Using \(Q_k=\phi_k^2 Q_{k-1}\), this becomes $\phi_k^3\Bigl(1+4Q_{k-1}(\phi_k-1)\Bigr)$. Moreover,
    \[
    \Delta_k = Q_k-Q_{k-1} = Q_{k-1}(\phi_k^2-1)=Q_{k-1}(\phi_k-1)(\phi_k+1),
    \]
    so \(Q_{k-1}(\phi_k-1)=\frac{\Delta_k}{\phi_k+1}\). Substituting yields
    \[
    \frac{\sqrt{Q_k}}{\sqrt{Q_{k-1}}}\Bigl(\frac{Q_k}{Q_{k-1}}+4Q_k\frac{\sqrt{Q_k}-\sqrt{Q_{k-1}}}{\sqrt{Q_{k-1}}}\Bigr)
    = \Bigl(1+\frac{4\Delta_k}{\phi_k+1}\Bigr)\frac{Q_k^{3/2}}{Q_{k-1}^{3/2}}.
    \]
    Multiplying both sides by \(V_k^{(w)}(y_k,x_k)\ge 0\) gives the claimed equality. The final inequality holds because $\phi_k+1\geq2$. Therefore, in both cases, the inequality
    \begin{align*}
        V^{(w)}_{k+1}(y_{k}, x_{k})
        \leq (1+2\Delta_k)\frac{Q^{3/2}_k}{Q^{3/2}_{k-1}}V^{(w)}_{k}(y_{k}, x_k),
    \end{align*}
    holds generically for all \(y_{k}, x_{k}\), proving the lemma.
\end{proof}

\subsubsection{Convergence Bound for Arbitrary Reference Points}\label{sec:conv-arbitrary}
With the recursions above on the auxiliary point as well as Lemma~\ref{lem:app-AGD-Lyap-1}, we are ready to derive the new progress bounds. 
\begin{proposition}
\label{prop:agd-carmon-style}
Let $f : \mathbb{R}^d \to \mathbb{R}$ be $L_1$-smooth. Fix $w \in \mathbb{R}^d$ and assume that the pathwise strong convexity condition in \eqref{eq:pathwise-sc} holds. Given initial points $x_0 = y_0$, if the estimates $L_1^{(t)}$ of the Lipschitz constant in accelerated gradient descent (Algorithm~\ref{alg:AGD}) are generated monotonically by adaptive backtracking (Algorithm~\ref{alg:ABLS}) with $c \in [1/2,1)$ and $L_1^{(0)} > \sigma$, then for $t \ge 0$

\begin{equation*}
  f(y_t) - f(w)
  \;\le\;
  e^{\frac{-t}{\sqrt{Q_{t-1}}}}Q_{t-1}^{3/2}\,
\psi(w)
\;+\;\frac{\|\nabla f(w)\|^2}{\sigma}\;
Q_{t-1}^{3/2}
\Bigl(\sqrt{Q_{t-1}}-1\Bigr),
\end{equation*}
where $\psi(w)=f(y_0)-f(w)+\frac{\sigma}{2}\|w-y_0\|^2$.
\end{proposition}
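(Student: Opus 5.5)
We will prove the bound by chaining the two one-step recursions into a single multiplicative recursion for the Lyapunov values along the trajectory, and then unrolling it. Write $V_k := V^{(w)}_k(y_k,x_k)$ and $G := \tfrac{1}{\sigma}\|\nabla f(w)\|^2$.

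\textbf{One-step recursion.} Lemma~\ref{lem:app-AGD-Lyap-1} does not use optimality of $w$, as noted after its statement. It therefore gives
\[
V_{k+1} \le (1+\delta_{k+1})^{-1}\, V^{(w)}_{k+1}(y_k,x_k), \qquad (1+\delta_{k+1})^{-1} = 1 - \tfrac{1}{\sqrt{Q_k}}.
\]
One caveat: if $V^{(w)}_{k+1}(y_k,x_k)$ is negative, the rearrangement must be checked. The inequality $(1+\delta)V_{k+1}\le V^{(w)}_{k+1}(y_k,x_k)$ divides cleanly, so this causes no difficulty. Next, Lemma~\ref{lemma-5} bounds $V^{(w)}_{k+1}(y_k,x_k)$ by $r_k(V_k + G)$, where $r_k := Q_k^{3/2}/Q_{k-1}^{3/2}$. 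Combining the two,
\[
V_{k+1} + G \;\le\; \bigl(1-Q_k^{-1/2}\bigr)\, r_k\, (V_k+G) \;+\; G\bigl[1-(1-Q_k^{-1/2})\, r_k\bigr].
\]
A cleaner route is to track the shifted quantity $U_k := V_k + c\,G$ for a suitable constant $c$. The target inequality has slack $G\,Q_{t-1}^{3/2}(\sqrt{Q_{t-1}}-1)$, which suggests $c$ of order $\sqrt{Q}-1$. This is the fixed point $c = \theta r/(1-\theta r)$ of the affine recursion when $r = 1$ and $\theta = 1-Q^{-1/2}$, since then $c = (1-Q^{-1/2})\sqrt{Q} = \sqrt{Q}-1$. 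So we aim to show
\[
U_{k+1} \le \bigl(1-Q_k^{-1/2}\bigr)\, r_k\, U_k \quad\text{with } c = \sqrt{Q_{t-1}}-1.
\]
Monotonicity $Q_k \le Q_{t-1}$ lets us use the largest $c$ throughout; the growth factor $r_k \ge 1$ is absorbed by having $c\, r_k$ on the right, and the leftover excess is dominated because the final slack is multiplied by $Q_{t-1}^{3/2}$.

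\textbf{Unrolling.} The product $\prod_k r_k$ telescopes to $Q_{t-1}^{3/2}/Q_{-1}^{3/2} \le Q_{t-1}^{3/2}$, using $Q_{-1}=Q_0\ge 1$. The product $\prod_k \bigl(1-Q_k^{-1/2}\bigr)$ is at most $\bigl(1-Q_{t-1}^{-1/2}\bigr)^t \le e^{-t/\sqrt{Q_{t-1}}}$, again by monotonicity. The initial value is $V_0 = \psi(w)$, since $z_0 = x_0 = y_0$. For the endpoint we need $f(y_t)-f(w) \le V_t$, which holds because the norm term is nonnegative. These give
\[
f(y_t)-f(w) \;\le\; e^{-t/\sqrt{Q_{t-1}}}\, Q_{t-1}^{3/2}\, \bigl(\psi(w)+cG\bigr) \;-\; cG.
\]
The slack we can afford is $Q_{t-1}^{3/2}\,cG$, which is at least the $e^{-t/\sqrt{Q_{t-1}}}Q_{t-1}^{3/2}cG$ residual left over after dropping $-cG$. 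Hence the stated bound follows.

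\textbf{Main obstacle.} The hard part is making the shifted recursion close exactly. The additive term in Lemma~\ref{lemma-5} carries the factor $r_k$, and $c$ must work uniformly in $k$ while the $Q_k$ vary. If the fixed-point choice fails when $r_k>1$, the fallback is to avoid the shift and unroll the affine recursion directly. One then bounds the geometric sum $\sum_j \prod_{i>j}(1-Q_i^{-1/2})\,r_i \cdot r_j G$ by $Q_{t-1}^{3/2}G\sum_{j\ge 0}(1-Q_{t-1}^{-1/2})^{j+1} = Q_{t-1}^{3/2}(\sqrt{Q_{t-1}}-1)G$. This is exactly the claimed slack, which suggests this direct summation is the intended argument.
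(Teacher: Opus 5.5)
\emph{Your main route fails, but your fallback is exactly the paper's proof and works.}

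The shifted-recursion argument does not work as written. Write $\theta_k:=1-Q_k^{-1/2}$ and $U_k=V_k+cG$. The one-step bound $V_{k+1}\le\theta_k r_k(V_k+G)$ gives $U_{k+1}\le\theta_k r_k V_k+(\theta_k r_k+c)G$. So $U_{k+1}\le\theta_k r_k U_k$ would require $c(1-\theta_k r_k)\le-\theta_k r_k$. This is impossible for $c>0$ whenever $\theta_k r_k\le 1$, in particular in the simplest case $r_k=1$, where there is no backtracking increase.

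The fixed point $c=\theta r/(1-\theta r)$ you computed belongs to the shift $U_k=V_k-cG$, not $V_k+cG$. For $r_k\equiv 1$ the correct shift gives $V_t\le\theta^t V_0+cG(1-\theta^t)$, not $\theta^t(V_0+cG)-cG$. Even with the sign fixed, a constant shift breaks at any step with $\theta_k r_k\ge 1$, e.g.\ when $Q_k$ doubles with $Q_k\ge 2.4$. So the growth factors $r_k$ cannot be ``absorbed'' as you suggest.

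Your ``Combining the two'' display is also wrong. Its right-hand side simplifies to $\theta_k r_k V_k+G$, which silently drops the slack. The correct statement is simply $V_{k+1}\le\theta_k r_k(V_k+G)$.

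Your fallback (unroll the affine recursion directly and sum the geometric series) is correct and coincides with the paper's proof, so make it the argument. When writing it, keep the factor $\theta_j$ on the $j$-th slack term; your displayed sum $\sum_j\prod_{i>j}\theta_i r_i\cdot r_j G$ omits it. The unrolled bound is
$V_t\le\bigl(\prod_{k=0}^{t-1}\theta_k r_k\bigr)\psi(w)+G\sum_{j=0}^{t-1}\prod_{i=j}^{t-1}\theta_i r_i$.
Using $\prod_{i=j}^{t-1}r_i=Q_{t-1}^{3/2}/Q_{j-1}^{3/2}\le Q_{t-1}^{3/2}$ and $\theta_i\le\theta_{t-1}$, the sum is at most $Q_{t-1}^{3/2}\sum_{l\ge 1}\theta_{t-1}^{l}=Q_{t-1}^{3/2}(\sqrt{Q_{t-1}}-1)$. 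Without $\theta_j$ you would get $\sqrt{Q_{t-1}}$ instead of $\sqrt{Q_{t-1}}-1$.

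Unrolling first and bounding the products afterwards is slightly cleaner than the paper's order. The paper replaces $1-Q_k^{-1/2}$ by $1-Q_{t-1}^{-1/2}$ inside each one-step inequality, which tacitly needs $V_k\ge 0$. Its own counterexample in the slack-necessity section exhibits a negative $V$. Your version only needs the sign of $V_0=\psi(w)$.

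Both versions still tacitly use $\psi(w)\ge 0$ when bounding $\prod_k\theta_k r_k\cdot\psi(w)$ by $e^{-t/\sqrt{Q_{t-1}}}Q_{t-1}^{3/2}\psi(w)$. This holds for the two reference points actually used: $w=y_0$, and $w=w^{\min}_t$ with $f(w^{\min}_t)\le f(y_0)$. It is not implied by the stated hypotheses.
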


\begin{proof}
Combining Lemmas~\ref{lem:app-AGD-Lyap-1} and~\ref{lemma-5}, we have for every $0 \leq k \leq t-1$

\begin{align*}
    V^{(w)}_{k+1}(y_{k+1}, x_{k+1})
    &\leq
    \frac{1}{1+\delta_k}V^{(w)}_{k+1}(y_{k}, x_{k})\\
    &\leq \frac{1}{1+\delta_k}\frac{Q_{k}^{3/2}}{Q_{k-1}^{3/2}}V^{(w)}_{k}(y_{k}, x_{k})+\frac{1}{1+\delta_k}\frac{Q_{k}^{3/2}}{\sigma Q_{k-1}^{3/2}}\|\nabla f(w)\|^2\\
    &= (1-\frac{1}{\sqrt{Q_{k-1}}})\frac{Q_{k}^{3/2}}{Q_{k-1}^{3/2}}V^{(w)}_{k}(y_{k}, x_{k})+(1-\frac{1}{\sqrt{Q_{k-1}}})\frac{Q_{k}^{3/2}}{\sigma Q_{k-1}^{3/2}}\|\nabla f(w)\|^2\\
    &\leq (1-\frac{1}{\sqrt{Q_{t-1}}})\frac{Q_{k}^{3/2}}{Q_{k-1}^{3/2}}V^{(w)}_{k}(y_{k}, x_{k})+(1-\frac{1}{\sqrt{Q_{t-1}}})\frac{Q_{k}^{3/2}}{\sigma Q_{k-1}^{3/2}}\|\nabla f(w)\|^2,
\end{align*}
where the last inequality uses $Q_k \le Q_{t-1}$ and the monotonicity of $x\mapsto 1-1/\sqrt{x}$.

By induction on $t$, we obtain that for all $t \ge 0$,
\[
\begin{aligned}
V^{(w)}_{t}(y_t,x_t)
\le\;&
\Bigl(1-\frac1{\sqrt{Q_{t-1}}}\Bigr)^{t}\frac{Q_{t-1}^{3/2}}{Q_{0}^{3/2}}\,
V^{(w)}_{0}(y_0,x_0)
+\frac{\|\nabla f(w)\|^2}{\sigma}\;
\sum_{i=0}^{t-1}
\Bigl(1-\frac1{\sqrt{Q_{t-1}}}\Bigr)^{t-i}\frac{Q_{t-1}^{3/2}}{Q_{i-1}^{3/2}}\\
\le\;&
\Bigl(1-\frac1{\sqrt{Q_{t-1}}}\Bigr)^{t}\frac{Q_{t-1}^{3/2}}{Q_{0}^{3/2}}\,
V^{(w)}_{0}(y_0,x_0)
+\frac{\|\nabla f(w)\|^2}{\sigma}\;
\frac{Q_{t-1}^{3/2}}{Q_{0}^{3/2}}\sum_{i=0}^{t-1}
\Bigl(1-\frac1{\sqrt{Q_{t-1}}}\Bigr)^{t-i}\\
\le\;&
\Bigl(1-\frac1{\sqrt{Q_{t-1}}}\Bigr)^{t}Q_{t-1}^{3/2}\,
\psi(w)
+\frac{\|\nabla f(w)\|^2}{\sigma}\;
Q_{t-1}^{3/2}
\Bigl(\sqrt{Q_{t-1}}-1\Bigr)\Bigl(1-\Bigl(1-\frac1{\sqrt{Q_{t-1}}}\Bigr)^{t}\Bigr).
\end{aligned}
\]

By definition of $V^{(w)}_t$ and non-negativity of the quadratic term,
\[
  f(y_t) - f(w)
  \;\le\;
  V^{(w)}_{t}(y_t,x_t)
  \qquad \forall t.
\]
Combining this with the above yields
\[
\begin{aligned}
  f(y_t) - f(w)
  \le\;&
  \Bigl(1-\frac1{\sqrt{Q_{t-1}}}\Bigr)^{t}Q_{t-1}^{3/2}\,
\psi(w)
+\frac{\|\nabla f(w)\|^2}{\sigma}\;
Q_{t-1}^{3/2}
\Bigl(\sqrt{Q_{t-1}}-1\Bigr)\Bigl(1-\Bigl(1-\frac1{\sqrt{Q_{t-1}}}\Bigr)^{t}\Bigr)\\
\le\;& e^{\frac{-t}{\sqrt{Q_{t-1}}}}Q_{t-1}^{3/2}\,
\psi(w)
+\frac{\|\nabla f(w)\|^2}{\sigma}\;
Q_{t-1}^{3/2}
\Bigl(\sqrt{Q_{t-1}}-1\Bigr)\quad \text{since $Q_{t-1} \geq 1$}.
\end{aligned}
\]
\end{proof}

\subsubsection{Convergence Bound for Descent Reference Points}\label{sec:conv-descent}
Almost identically to the above we get the progress bound for $w$ as a descent point. This also explains why we must keep track of $m$, the number of $Q_t$ increases, within our algorithm.
\begin{proposition}
\label{prop:better}
In addition to Proposition~\ref{prop:agd-carmon-style}, let w be such that $f(w)\leq f(y_k)-\frac{1}{2L^{(k)}_1}\|\nabla f(y_k)\|^2$. Assume that the pathwise strong convexity condition in \eqref{eq:pathwise-sc} holds. Then for $t \ge 0$
\begin{equation*}
  f(y_t) - f(w)
  \;\le\;
  e^{\frac{-t}{\sqrt{Q_{t-1}}}}(3Q_{t-1})^m\frac{Q_{t-1}^{3/2}}{Q_{0}^{3/2}}\,
\psi(w),
\end{equation*}
where $\psi(w)=f(y_0)-f(w)+\frac{\sigma}{2}\|w-y_0\|^2$, $\Delta_k =Q_k-Q_{k-1}\geq0$ and m is the number of iterations with $\Delta_k > 0$. Let $\bar L_1 :=\max\left\{
    L_1^{(0)},\; \frac{L_1}{2(1-c)\rho}
  \right\}$, then $m \leq \lfloor \log_{1/\rho} (\bar{L}_1 / L^{(0)}_1) \rfloor + 1$.
\end{proposition}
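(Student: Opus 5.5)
The plan mirrors the proof of Proposition~\ref{prop:agd-carmon-style}, with Lemma~\ref{lemma-6} in place of Lemma~\ref{lemma-5}. Fix $0\le k\le t-1$.

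\textbf{Step 1: one-step recursion.} Since $w$ satisfies the descent condition~\eqref{eq:descent_gap}, we have $f(y_k)-f(w)\ge 0$. Lemma~\ref{lem:app-AGD-Lyap-1} uses no optimality of $w$, so chaining it with Lemma~\ref{lemma-6} gives
\[
V^{(w)}_{k+1}(y_{k+1},x_{k+1})\le \frac{1}{1+\delta_{k+1}}(1+2\Delta_k)\frac{Q_k^{3/2}}{Q_{k-1}^{3/2}}V^{(w)}_k(y_k,x_k).
\]
Exactly as in Proposition~\ref{prop:agd-carmon-style}, $\tfrac{1}{1+\delta_{k+1}}=1-1/\sqrt{Q_k}$. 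Monotonicity of $Q_k$ then bounds this by $1-1/\sqrt{Q_{t-1}}$.

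\textbf{Step 2: unroll.} Iterating from $k=0$ to $t-1$ with no additive slack, the ratios $Q_k^{3/2}/Q_{k-1}^{3/2}$ telescope to $Q_{t-1}^{3/2}/Q_0^{3/2}$, using $Q_{-1}=Q_0$. The contraction factors give $(1-1/\sqrt{Q_{t-1}})^t\le e^{-t/\sqrt{Q_{t-1}}}$. What remains is $\prod_{k}(1+2\Delta_k)$.

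\textbf{Step 3: the product $\prod_k(1+2\Delta_k)$ — the main obstacle.} Only the $m$ indices with $\Delta_k>0$ contribute. For each such $k$, $\Delta_k=Q_k-Q_{k-1}\le Q_k\le Q_{t-1}$, so each factor is at most $1+2Q_{t-1}\le 3Q_{t-1}$ because $Q_{t-1}\ge 1$. The product is therefore at most $(3Q_{t-1})^m$.

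One could also use the sharper factor $1+4\Delta_k/(\phi_k+1)$, but the crude bound suffices. The delicate point is that Lemma~\ref{lemma-6} requires the descent condition relative to each $y_k$, which the proposition hypothesizes. I will also check that $V^{(w)}_k\ge 0$ holds, so that multiplying the inequalities is legitimate. This follows from $f(y_k)\ge f(w)$.

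\textbf{Step 4: conclude.} Since $V^{(w)}_0(y_0,x_0)=f(y_0)-f(w)+\frac{\sigma}{2}\|y_0-w\|^2=\psi(w)$ (as $z_0=x_0=y_0$), and $f(y_t)-f(w)\le V^{(w)}_t(y_t,x_t)$, the claimed bound follows.

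\textbf{Bound on $m$.} Each increase of $L_1^{(k)}$ multiplies it by at least $1/\rho$: the backtracking factor $\hat\rho$ satisfies $\hat\rho\le\rho$ whenever $v<1$, since $(1-c)/(1-cv)\le 1$. The estimates never exceed $\bar L_1$. Hence $(1/\rho)^{m-1}L_1^{(0)}\le\bar L_1$, which gives $m\le\lfloor\log_{1/\rho}(\bar L_1/L_1^{(0)})\rfloor+1$.
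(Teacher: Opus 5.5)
Your proposal is correct and follows the paper's proof essentially step for step. You chain Lemma~\ref{lem:app-AGD-Lyap-1} with Lemma~\ref{lemma-6}, bound every contraction factor by $1-1/\sqrt{Q_{t-1}}$, telescope the $Q^{3/2}$ ratios, bound each of the $m$ nontrivial factors $1+2\Delta_k$ by $3Q_{t-1}$, and count the increases of the estimate through the per-shrink factor $\hat\rho\le\rho$; like the paper, this last step implicitly assumes $\varepsilon_{\min}\le\rho$. Your indexing $1/(1+\delta_{k+1})=1-1/\sqrt{Q_k}$ is actually the one consistent with the statement of Lemma~\ref{lem:app-AGD-Lyap-1}, whereas the paper writes $\delta_k$ and $Q_{k-1}$, but the difference disappears once you bound by $Q_{t-1}$.
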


\begin{proof}
First we note that $m \leq \lfloor \log_{\rho} (\bar{\alpha} / \alpha_0) \rfloor + 1$ via Algorithm~\ref{alg:ABLS} where  $\bar{\alpha}$ is the final accepted step size globally. We can rewrite this as $m \leq \lfloor \log_{1/\rho} (\bar{L}_1 / L^{(0)}_1) \rfloor + 1$. 

Combining Lemmas~\ref{lem:app-AGD-Lyap-1} and~\ref{lemma-6}, we have that for every $0 \leq k \leq t-1$
\begin{align*}
    V^{(w)}_{k+1}(y_{k+1}, x_{k+1})
    \leq
    \frac{1}{1+\delta_k}V^{(w)}_{k+1}(y_{k}, x_{k})
    &\leq \frac{1}{1+\delta_k}(1+2\Delta_k)\frac{Q^{3/2}_k}{Q^{3/2}_{k-1}}V^{(w)}_{k}(y_{k}, x_{k}),\\
    &\leq \frac{\sqrt{Q_{k-1}}-1}{\sqrt{Q_{k-1}}}(1+2\Delta_k)\frac{Q^{3/2}_k}{Q^{3/2}_{k-1}}V^{(w)}_{k}(y_{k}, x_{k})\\
    &\leq (1-\frac{1}{\sqrt{Q_{k-1}}})(1+2\Delta_k)\frac{Q^{3/2}_k}{Q^{3/2}_{k-1}}V^{(w)}_{k}(y_{k}, x_{k}).
\end{align*}

We rename some of the $Q_k$ with $Q_t$, and by induction on $t$, obtain that for all $t \ge 0$,
\[
\begin{aligned}
V^{(w)}_{t}(y_t,x_t)
\;\le\;&
\Bigl(1-\frac1{\sqrt{Q_{t-1}}}\Bigr)^{t} \Bigl [ \prod_{k=0}^{t-1} (1 + 2\Delta_k) \Bigr] \frac{Q_{t-1}^{3/2}}{Q_{0}^{3/2}}\,
V^{(w)}_{0}(y_0,x_0)\\
\;\le\;&
\Bigl(1-\frac1{\sqrt{Q_{t-1}}}\Bigr)^{t}\Bigl [\prod_{k:\Delta_k > 0} (1 + 2\Delta_k)\Bigr] \frac{Q_{t-1}^{3/2}}{Q_{0}^{3/2}}\,
\psi(w)\\
\;\le\;&
\Bigl(1-\frac1{\sqrt{Q_{t-1}}}\Bigr)^{t}(3Q_{t-1})^m\frac{Q_{t-1}^{3/2}}{Q_{0}^{3/2}}\,
\psi(w),
\end{aligned}
\]
where we have used that $Q_k \le Q_{t-1}$ for $k\leq t-1$ and that $1+2\Delta_k \leq1+2Q_{t-1}\leq3Q_{t-1}$.

By definition of $V^{(w)}_t$ and non-negativity of the quadratic term,
\[
  f(y_t) - f(w)
  \;\le\;
  V^{(w)}_{t}(y_t,x_t)
  \qquad\text{for all }t.
\]
Combining this with the above yields
\[
\begin{aligned}
  f(y_t) - f(w)
  \;\le\;&
  \Bigl(1-\frac1{\sqrt{Q_{t-1}}}\Bigr)^{t}(3Q_{t-1})^m\frac{Q_{t-1}^{3/2}}{Q_{0}^{3/2}}\,
\psi(w)\\
\;\le\;& e^{\frac{-t}{\sqrt{Q_{t-1}}}}(3Q_{t-1})^m\frac{Q_{t-1}^{3/2}}{Q_{0}^{3/2}}\,
\psi(w)
\end{aligned}
\]
\end{proof}

\subsection{Estimating the Third-Order Lipschitz Constant}\label{sec:L_3}
In this section, we address the remaining key limitation of the \textsc{AGD-Until-Guilty} framework: its dependence on prior knowledge of the third-order Lipschitz constant $L_3$. The algorithm \textsc{Guarded-Non-Convex-AGD} by Carmon et al.\ requires prior knowledge of $L_3$ in two places: to set the regularization parameter $\alpha = 2L_3^{1/3}\epsilon^{2/3}$ and the negative-curvature step size $\eta = \sqrt{2\alpha/L_3}$. We justify the changes in \textsc{PF-AGD} which allow us to replace the use of $L_3$ with a running estimate $M_k$.

The section is organized as follows. Subsection~\ref{sec:proxies} defines the proxy conditions and justifies that these conditions correctly identify when $M_k$ is insufficient. Subsections~\ref{sec:modified-lemma7} and~\ref{sec:outer-bound} then verify that the progress bound of Carmon et al. {\cite[Lemma 7]{pmlr-v70-carmon17a}} and the outer iteration count $K$ both carry over unchanged up to constants.

Since we are returning two extra pairs $(y_j,w_t), (w_t,y_j)$ from \citet{pmlr-v70-carmon17a}, we give a straightforward justification to show that the negative curvature exploitation in Carmon et al. {\cite[Lemmas 3 and 6]{pmlr-v70-carmon17a}} still holds.

For Carmon et al. {\cite[Lemma 3]{pmlr-v70-carmon17a}}, we must show that $\|y_j - w_t\| \le 4\tau$. Without loss of generality let $u=w_t$ and $v=y_j$ as the other case is symmetric. In Carmon et al. {\cite[Lemma 3]{pmlr-v70-carmon17a}}, we always have that $\|u - y_0\| \le \tau$. But since $y_j$ is one of the iterates, we also have $\|y_j - y_0\| \le \tau$. By the triangle inequality, we have $\|y_j-w_t\|\leq 2\tau\leq 4\tau$.\\

For Carmon et al. {\cite[Lemma 6]{pmlr-v70-carmon17a}}, we must show that $f(v)\leq f(y_0)+14\alpha\tau^2$ for $v \in \{y_j,w_t\}$. We first consider $v=y_j$. By Corollary~\ref{cor:modified_cor1_main}, $\hat{f}(y_j) \le \hat{f}(y_0) = f(y_0)$ and since the penalty term is non-negative, $f(y_j) \le \hat{f}(y_j)$. Thus, $f(y_j) \le f(y_0)$ holds. For the case $v=w_t$: if $w_t=y_0$, then we are done. If $w_t=w^{\min}_t$, then the inequality follows from the $\zeta_t$ backtracking.

\subsubsection{Proxy Conditions for $M_k$}\label{sec:proxies}
Fix an outer iteration $k$ and a current estimate $M_k>0$. In the inner \texttt{while} loop we set
\begin{equation*}
\alpha(M_k) \coloneqq 2M_k^{1/3}\epsilon^{2/3},\qquad
\tau(M_k) \coloneqq \sqrt{\frac{\alpha(M_k)}{32M_k}},\qquad
\eta(M_k) \coloneqq \sqrt{\frac{2\alpha(M_k)}{M_k}}.
\end{equation*}

We run \textsc{Modified-AGD} on $\hat f$ to obtain $(x_0^{t},y_0^{t},u,v)$. When $u,v\neq\textsc{Null}$,
\[
b^{(1)} \gets \textcolor{algoblue}{\textsc{Find-Best-Iterate}_3}(f,y_0^{t},u,v),\qquad
b^{(2)} \gets \textcolor{algoblue}{\textsc{Exploit-NC-Pair}_3}(f,u,v,\eta(M_k)).
\]

The algorithm accepts the current $M_k$ in the non-convex branch only if either
\begin{equation}\label{eq:case1-test}
f(b^{(1)})\ \le\ f(y_0)\ -\ \alpha(M_k)\tau(M_k)^2
\end{equation}
or (in the $f(b^{(1)}) > f(y_0) - \alpha\tau(M_k)^2$ case) if both the following proxies for [\citet{pmlr-v70-carmon17a}, Lemmas 5 and 6] hold:
\begin{align}
\label{eq:cert-lemma6}
f(v)\ &\le\ f(y_0)\ +\ 14\,\alpha(M_k)\tau(M_k)^2,\\
\label{eq:cert-lemma5}
f(b^{(2)})\ &\le\ \max\Big\{ f(v)-\frac{\alpha(M_k)}{4}\eta(M_k)^2,\ \ f(u)-\frac{\alpha(M_k)}{12}\eta(M_k)^2\Big\}.
\end{align}

The key observation is that the conclusions of Carmon et al. {\cite[Lemmas~5 and~6]{pmlr-v70-carmon17a}} can be verified directly on the computed iterates, without knowing $L_3$ explicitly. Whenever \eqref{eq:cert-lemma6} or \eqref{eq:cert-lemma5} fail, we can conclude that $M_k < L_3$ and the algorithm updates $M_k\leftarrow \gamma M_k$ for some fixed $\gamma>1$; once $M_k \geq L_3$, the conditions are guaranteed to hold and the estimate is never increased again. This gives a backtracking scheme where the total number of $L_3$ increases is bounded by $\mathcal{O}(\log_{\min(\gamma, 1/\rho)}(L_3/M_0))$.

\subsubsection{Progress Bound with Adaptive $M_k$}\label{sec:modified-lemma7}
The following result gives us the equivalent of Carmon et al. {\cite[Lemma 7]{pmlr-v70-carmon17a}} with $M_k$ replacing $L_3$.

\begin{lemma}
Let $f : \mathbb{R}^d \to \mathbb{R}$ be $L_1$-smooth and have $L_3$-Lipschitz continuous third-order derivatives, let $\epsilon, \alpha(M_k) > 0$ and $p_0 \in \mathbb{R}^d$. If $p_0^K$ is the sequence of iterates produced by \textcolor{algoblue}{\textsc{PF-AGD}}$(f, p_0, L_1^{(0)}, M_0, \gamma, \epsilon)$, then for every $1 \leq k < K$,

\begin{equation*}
f(p_k) \leq f(p_{k-1}) - \min \left\{ \frac{\epsilon^2}{5\alpha(M_k)}, \frac{\alpha(M_k)^2}{32M_k} \right\}.
\end{equation*}
\end{lemma}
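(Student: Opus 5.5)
We go through the exits of the inner \texttt{while} loop of \textsc{PF-AGD} for the final accepted estimate $M_k$, and write $\alpha=\alpha(M_k)$, $\tau=\tau(M_k)$, $\eta=\eta(M_k)$. Since the loop only breaks on one of three branches, it suffices to show that each branch gives
\[
f(p_k)\le f(p_{k-1})-\min\bigl\{\epsilon^2/(5\alpha),\,\alpha^2/(32M_k)\bigr\}.
\]
The key algebraic identity is $\alpha\tau^2=\alpha^2/(32M_k)$. Also, by the choice $\alpha=2M_k^{1/3}\epsilon^{2/3}$, both candidate decreases scale like $\epsilon^{4/3}M_k^{-1/3}$. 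We recall that $y_0=p_{k-1}$ and $\hat f(y_0)=f(y_0)$.

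\textbf{Branch 1 (convex exit, $(u,v)=\textsc{Null}$).} Here $p_k=y_t$ with $\|\nabla\hat f(y_t)\|\le\epsilon/10$, and the outer loop has not terminated, so $\|\nabla f(p_k)\|>\epsilon$. We follow Carmon et al.\ [Lemma 7, first case]. From $\nabla f(y_t)=\nabla\hat f(y_t)-2\alpha(y_t-y_0)$ we get
\[
2\alpha\|y_t-y_0\|\ge\|\nabla f(y_t)\|-\epsilon/10>\tfrac{9}{10}\epsilon,
\]
so $\|y_t-y_0\|\ge 9\epsilon/(20\alpha)$. Then
\[
f(y_t)=\hat f(y_t)-\alpha\|y_t-y_0\|^2\le \hat f(y_0)-\alpha\|y_t-y_0\|^2 .
\]
The inequality $\hat f(y_t)\le\hat f(y_0)$ comes from \eqref{eq:iterate-bound} of Corollary~\ref{cor:modified_cor1_main}; equivalently, the \textsc{Restart} mechanism guarantees that no accepted $y_t$ exceeds $f(y_0)$. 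Combining, $f(y_0)-f(y_t)\ge \tfrac{81}{400}\epsilon^2/\alpha\ge\epsilon^2/(5\alpha)$.

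\textbf{Branch 2 (proxy check on line 9).} Here $p_k=b^{(1)}$ and $f(b^{(1)})\le f(y_0)-\alpha\tau^2=f(p_{k-1})-\alpha^2/(32M_k)$. This is immediate.

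\textbf{Branch 3 (negative-curvature exit).} Here the line 9 test failed, and both proxies \eqref{eq:cert-lemma6} and \eqref{eq:cert-lemma5} hold. We set $p_k=\arg\min\{f(b^{(1)}),f(b^{(2)})\}$, so $f(p_k)\le f(b^{(2)})$. By \eqref{eq:cert-lemma5} we must bound both $f(v)-\alpha\eta^2/4$ and $f(u)-\alpha\eta^2/12$ by $f(y_0)-\alpha\tau^2$. First, $\eta^2=2\alpha/M_k=64\tau^2$. Hence \eqref{eq:cert-lemma6} gives
\[
f(v)-\alpha\eta^2/4\le f(y_0)+14\alpha\tau^2-16\alpha\tau^2=f(y_0)-2\alpha\tau^2 .
\]
For $u$, since $u$ is among the candidates in \textsc{Find-Best-Iterate}$_3$ and the line 9 test failed, we have $f(u)\ge f(b^{(1)})$. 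That ordering points the wrong way, so instead we need an upper bound on $f(u)$. Corollary~\ref{cor:modified_cor1_main} \eqref{eq:iterate-bound} gives $f(u)\le\hat f(u)\le f(y_0)$. Then
\[
f(u)-\alpha\eta^2/12\le f(y_0)-\tfrac{64}{12}\alpha\tau^2\le f(y_0)-\alpha\tau^2 .
\]
So this branch also yields a decrease of at least $\alpha^2/(32M_k)$.

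\textbf{Main obstacle.} We expect the delicate step to be justifying $f(u)\le f(y_0)$ when $u=w_t$ is one of the new witness choices ($y_0$ or $w^{\min}_t$), rather than an iterate $y_j$. For $w^{\min}_t$ we would argue that it is some $\zeta_s$ satisfying the descent condition \eqref{eq:descent} relative to $y_s$. Since $\hat f(y_s)\le\hat f(y_0)$, this gives $\hat f(\zeta_s)\le\hat f(y_0)$, and dropping the nonnegative penalty gives $f(\zeta_s)\le f(y_0)$. A second check is needed for the branch-1 bound: Corollary~\ref{cor:modified_cor1_main} controls $y_1,\dots,y_{t-1}$, and we must extend it to the final $y_t$. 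For this we would invoke line 2 of \textsc{Certify-Progress} together with the restart, which enforce $\hat f(y_t)\le\hat f(y_0)$ at the returned iterate.
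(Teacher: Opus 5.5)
Your proof is correct and follows the paper's case analysis. The convex exit reduces to Carmon et al.'s Lemma~2 argument via $\hat f(y_t)\le\hat f(y_0)$, and the line-9 exit is immediate. The negative-curvature exit combines the two proxies with $f(u)\le\hat f(u)\le f(y_0)$ to give a decrease of $\alpha^2/(16M_k)=2\alpha\tau^2$. There the paper also invokes $\|u-v\|\le 4\tau\le\eta/2$, which you rightly omit because the proxy is tested directly.

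For $f(w^{\min}_t)\le f(y_0)$, use the paper's justification instead of yours: $\hat f(w^{\min}_s)$ is nonincreasing in $s$ starting from $w^{\min}_0=y_0$. Your version does not cover the case where $w^{\min}$ was set at line~7 to a pre-restart $\zeta_s$ whose $y_s$ had $\hat f(y_s)>\hat f(y_0)$; the monotonicity argument does.
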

\begin{proof}
We shall review the proof of Carmon et al. {\cite[Lemma 7]{pmlr-v70-carmon17a}} one argument at a time. The first case is when $u,v=\textsc{Null}$. Although our regularized objective $\hat f$ potentially changes from iteration to iteration due to $\alpha(M_k)$ being increased, the original proof of this case from Carmon et al. {\cite[Lemma 2]{pmlr-v70-carmon17a}} only relied on the property that $\hat f(p_k)=\hat f(y_t)\leq \hat f(y_0) =f(p_{k-1})$, which is still valid despite the fact that the regularization coefficient has changed.

Also, the next case where $f(b^{(1)}) \le f(y_0)-\alpha(M_k)\tau(M_k)^2$, we are also done with $L_3$ replaced by $M_k$. In our algorithm we simply return and break the line search in this case.

Now given $f(b^{(1)}) > f(y_0)-\alpha(M_k)\tau(M_k)^2$, by Carmon et al. {\cite[Lemma 3]{pmlr-v70-carmon17a}} (unchanged, works for arbitrary $\tau(M)$) we have that
\[
\|u-v\|\leq4\tau(M_k)\leq\sqrt{\frac{\alpha(M_k)}{2M_k}}=\frac{\eta(M_k)}{2}
\]

Therefore, we can apply inequality \eqref{eq:cert-lemma5} (with $\eta(M_k)$ as defined above) to show that
\begin{equation}\label{eq: fb2}
f(b^{(2)})\ \le\ \max\Big\{ f(v)-\frac{\alpha(M_k)^2}{2M_k},\ \ f(u)-\frac{\alpha(M_k)^2}{6M_k}\Big\}.
\end{equation}

Carmon et al. {\cite[Corollary~1 (9)]{pmlr-v70-carmon17a}} holds as usual, so $f(u)\leq \hat f(u) \leq \hat f(y_0) = f(p_{k-1})$ (see Corollary~\ref{cor:modified_cor1_main} for more details). Moreover, since $f(b^{(1)}) \geq f(y_0)-\alpha(M_k)\tau(M_k)^2$ and $\tau(M_k) = \sqrt{\frac{\alpha(M_k)}{32M_k}}$, we may apply \eqref{eq:cert-lemma6} to obtain
\[
f(v) \leq f(y_0)+14\alpha(M_k) \tau(M_k)^2 \leq f(p_{k-1})+ \frac{7\alpha(M_k)^2}{16M_k}
\]

Combining this with \eqref{eq: fb2}, we find that
\[
f(p_k) \leq f(b^{(2)}) \leq f(p_{k-1}) - \min \left\{ \frac{\alpha(M_k)^2}{2M_k} - \frac{7\alpha(M_k)^2}{16M_k}, \frac{\alpha(M_k)^2}{6M_k} \right\} = f(p_{k-1}) - \frac{\alpha(M_k)^2}{16M_k},
\]

which concludes the case $v, u \neq \textsc{Null}$ under third-order smoothness.
\end{proof}

\subsubsection{Bounding the Outer Iterations}\label{sec:outer-bound}
We wish to obtain a bound on the total number of outer iterations in terms of our original problem parameter $L_3$ and $M_0$. We begin by showing that our estimate always remains bounded. 

\begin{lemma}[Boundedness of $M_k$]\label{Mbounded}
Let $\{M_k\}_{k \geq 0}$ be the sequence of estimates produced by 
\textcolor{algoblue}{\textsc{PF-AGD}}$(f, p_0, L_1^{(0)}, M_0, \gamma, \epsilon)$, with initial estimate $M_0 > 0$ 
and multiplicative update factor $\gamma > 1$. Then for all $k \geq 0$,
\[
    M_k \leq \bar{M} \coloneqq \max\{M_0,\, \gamma L_3\}.
\]
In particular, the estimate $M_k$ is increased at most 
$\mathcal{O}(\log_\gamma(L_3 / M_0))$ times in total.
\end{lemma}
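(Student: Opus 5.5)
The argument is an invariant on $M_k$ combined with the key fact that the proxy tests can never fail once $M_k \ge L_3$. First I would establish a \emph{no-increase lemma}: if the current estimate satisfies $M_k \ge L_3$, then in the inner \texttt{while} loop of \textsc{PF-AGD} the update $M_k \gets \gamma M_k$ is never executed. The update happens only when the first branch ($u,v=\textsc{Null}$) and the proxy check \eqref{eq:case1-test} both fail and one of \eqref{eq:cert-lemma6}, \eqref{eq:cert-lemma5} is violated. So it suffices to show that $M_k\ge L_3$ forces both proxies to hold.

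For \eqref{eq:cert-lemma6}, no curvature assumption is needed. The two new pairs were already handled before Section~\ref{sec:proxies}: $f(y_j)\le f(y_0)$ by \eqref{eq:iterate-bound}, and $f(w_t)\le f(y_0)$ either trivially or from the $\zeta_t$ backtracking. For the original pairs $(y_j,x_j)$ and $(w_t,x_j)$ with $v=x_j$, I would invoke Carmon et al.\ {\cite[Lemma 6]{pmlr-v70-carmon17a}}. Its proof only uses $L_1$-smoothness, the assumption $f(b^{(1)})>f(y_0)-\alpha\tau^2$, and the Find-Best-Iterate points $c_j,q_j$. The third-order constant enters there only through $\tau$, and the argument goes through verbatim with $\tau(M_k)$; where it does use $L_3$, the bound only improves when $L_3\le M_k$.

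For \eqref{eq:cert-lemma5}, I would apply Carmon et al.\ {\cite[Lemma 5]{pmlr-v70-carmon17a}} with $\alpha=\alpha(M_k)$ and $\eta=\eta(M_k)$. The certificate \eqref{eq:sc-check} for $\hat f$ means $f$ has curvature below $-\alpha(M_k)$ along $u-v$ in the sense of the secant inequality. Lemma 3 gives $\|u-v\|\le 4\tau\le\eta/2$. The third-order Taylor argument along the line through $u,v$ then yields the decrease $\frac{\alpha\eta^2}{4}$ or $\frac{\alpha\eta^2}{12}$, provided the cubic error term $L_3\eta^4/\text{const}$ is dominated by $\alpha\eta^2$. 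With $\eta^2=2\alpha/M_k$ this error scales as $L_3\alpha^2/M_k^2\le\alpha^2/M_k$, exactly as in the original argument when $M_k\ge L_3$. This is the step I expect to be the main obstacle. I have to check that every place Lemma 5 uses the identity $\eta=\sqrt{2\alpha/L_3}$ survives replacing it by an inequality in the favourable direction. If a constant is tight, I would first try rewriting the Taylor bound with $L_3$ kept explicit and substituting $L_3\le M_k$ at the very end.

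With the no-increase lemma in hand, boundedness follows by induction on the sequence of all values taken by the estimate, including increases within a single while loop. The base case is $M_0\le\bar M$. For the inductive step, if an increase $M\gets\gamma M$ occurs, then by the contrapositive of the no-increase lemma the pre-update value satisfies $M<L_3$, so $\gamma M<\gamma L_3\le\bar M$. Otherwise the value is unchanged, and $M_k\gets M_{k-1}$ carries it across outer iterations. Hence $M_k\le\bar M$ for all $k$.

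For the count, let $N$ be the number of increases. Since the sequence is nondecreasing, $\gamma^{N}M_0\le\bar M$, so $N\le\log_\gamma(\bar M/M_0)\le \max\{0,\log_\gamma(L_3/M_0)\}+1=\mathcal{O}(\log_\gamma(L_3/M_0))$.
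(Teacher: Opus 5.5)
Your proposal is correct and follows the paper's argument. Like you, the paper rests on the fact that the line-11 test of \textsc{PF-AGD} cannot fire once $M_k\ge L_3$, so every increase starts from a value below $L_3$ and therefore $M_k<\gamma L_3$; your induction over all intermediate values and your explicit count of increases make this more careful, and your remark that $\eta(M_k)\le\sqrt{2\alpha(M_k)/L_3}$ when $M_k\ge L_3$ is what Lemma~5 of Carmon et al.\ needs, so your flagged ``main obstacle'' goes through. One small correction: Lemma~6 of Carmon et al.\ does use third-order smoothness, through an upper bound on $\tau$ of order $\sqrt{\alpha/L_3}$, which $\tau(M_k)=\sqrt{\alpha(M_k)/(32M_k)}$ satisfies once $M_k\ge L_3$. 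Drop the claim that its proof only uses $L_1$-smoothness; your fallback remark already covers this case.
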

\begin{proof}
If $M_0 \geq L_3$, we know the check in line~11 of \textsc{PF-AGD} will always be \texttt{False} so the estimate will never be incremented. Otherwise, $M_0 <L_3$. Suppose after some point $M_k\geq L_3$, by the same reasoning as above, we know that $M_k$ will not be incremented further. Hence, $M_k< \gamma L_3$, and at most $\mathcal{O}(\log_\gamma(L_3/M_0))$ increments occur. Combining both cases we have $M_k\leq \max\{M_0,\gamma L_3\}$.
\end{proof}

With the boundedness of $M_k$ in mind, we split our analysis into cases. If $M_0 \leq \gamma L_3$, we have
\begin{align}
\nonumber f(p_k) \leq f(p_{k-1}) - \min \left\{ \frac{\epsilon^2}{5\alpha(M_k)}, \frac{\alpha(M_k)^2}{32M_k} \right\} &= f(p_{k-1}) - \min \left\{ \frac{\epsilon^{4/3}}{10M_k^{1/3}}, \frac{\epsilon^{4/3}}{8M_k^{1/3}} \right\} \\
&\nonumber \leq f(p_{k-1}) - \min \left\{ \frac{\epsilon^2}{5\alpha(\gamma L_3)}, \frac{\alpha(\gamma L_3)^2}{32\gamma L_3} \right\}\\
&= f(p_{k-1}) - \min \left\{ \frac{\epsilon^2}{5\gamma^{1/3}\alpha(L_3)}, \frac{\alpha(L_3)^2}{32\gamma^{1/3} L_3} \right\} \label{new-lemma7}
\end{align}

Otherwise, $M_k=M_0$ and we have

\begin{equation}
    f(p_k) \leq f(p_{k-1}) - \min \left\{ \frac{\epsilon^2}{5\alpha(M_0)}, \frac{\alpha(M_0)^2}{32M_0} \right\} = f(p_{k-1}) - \frac{\epsilon^{4/3}}{10M_0^{1/3}}\label{new-lemma7(2)}
\end{equation}

With these new progress bounds we can derive the upper bound $K$ of the number of iterations of \textsc{PF-AGD} by telescoping \eqref{new-lemma7}
\begin{align*}
\Delta_f \ge f(p_0) - f(p_{K-1}) = \sum_{k=1}^{K-1} \left( f(p_{k-1}) - f(p_k) \right) &\ge (K-1) \gamma^{-1/3}\cdot \min \left\{ \frac{\epsilon^2}{5\alpha(L_3)}, \frac{\alpha(L_3)^2}{32L_3} \right\}\\ & \ge (K-1) \frac{\epsilon^{4/3}}{10\gamma^{1/3}L_3^{1/3}}.
\end{align*}

In the case of \eqref{new-lemma7(2)}
\begin{align*}
\Delta_f \ge f(p_0) - f(p_{K-1}) = \sum_{k=1}^{K-1} \left( f(p_{k-1}) - f(p_k) \right)  \ge (K-1) \frac{\epsilon^{4/3}}{10M_0^{1/3}}.
\end{align*}

We therefore conclude that 
\begin{equation}\label{boundonK}
K\leq 1+10\epsilon^{-4/3}\Delta_f\bar M^{1/3},
\end{equation}
where $\bar{M} := \max\{\gamma L_3,M_0\}$.

\subsection{Global Convergence Rate for PF-AGD}\label{bound:proof}
With the Lyapunov analysis of Section~\ref{sec:Lyapunov_analysis} and the adaptive $L_3$ estimation of Section~\ref{sec:L_3} in hand, we are almost ready to assemble the main complexity bound. The argument proceeds in three stages. We first justify in Section~\ref{sec:partial-momentum} that the partial-momentum restarts introduced in Algorithm~\ref{alg:restart-handler} do not break acceleration. Section~\ref{sec:inner_iters} then translates these progress guarantees into a bound on the inner iteration count $T$, verifies that a non-convexity certificate $(u,v) \neq \textsc{Null}$ implies violation of a pathwise strong convexity condition, and confirms the iterates satisfy the boundedness property of Carmon et al. {\cite[Corollary~1 (9)]{pmlr-v70-carmon17a}} throughout. Finally, Section~\ref{sec:main_bound} combines the outer iteration bound $K$ from Section~\ref{sec:L_3} with the inner bound $T$ to obtain the main result: \textsc{PF-AGD} finds an $\epsilon$-stationary point in $\widetilde{\mathcal{O}}(\epsilon^{-5/3})$ gradient evaluations.

\subsubsection{Restarts Preserve Acceleration}
\label{sec:partial-momentum}
 
We show that the momentum correction introduced in Algorithm~\ref{alg:restart-handler} preserves the accelerated rates. Recall the Lyapunov potential from \eqref{lyapunov_function}:
\begin{equation}\label{eq:lyapunov-def}
  V^{(w)}_t(y_k,x_k)
  \;:=\;
  f(y_k) - f(w)
  + \frac{\sigma}{2}\,\bigl\|z_t(y_k,x_k) - w\bigr\|^2,
  \quad
  z_t(y_k,x_k) \;:=\; x_k + \sqrt{Q_{t-1}}\,(x_k - y_k),
\end{equation}
where $w\in\mathbb{R}^d$ is a reference point and $Q_t := L_1^{(t)}/\sigma$. The subscript on~$V$ determines which condition number enters the auxiliary point: $V_{t+1}$ uses~$Q_t$ via $z_{t+1}(y_k,x_k) = x_k + \sqrt{Q_t}\,(x_k - y_k)$. Additionally, $\hat{V}_{t}$ denotes the Lyapunov potential with auxiliary point evaluated at $Q_{t-1}^{\mathrm{agd}}$, i.e., $\hat{z}_{t}(y,x)=x+\sqrt{Q_{t-1}^{\mathrm{agd}}}(x-y)$.

At each iteration~$t$ of Algorithm~\ref{alg:agd-until-guilty}, \textsc{Modified-AGD}, the routine \textsc{AGD-Step} (Algorithm~\ref{alg:agd-step}) computes the iterates $(x_t,y_t)$ from $(x_{t-1},y_{t-1})$ using the condition number $Q_t = L_1^{(t)}/\sigma$. On the other hand, \citet{cavalcanti2025adaptive} produce $(y_{t+1},x_{t+1})$ using $Q_t$ as step~$t$. Given this disparity in the iteration index, Lemma~\ref{lem:app-AGD-Lyap-1} is rewritten as
\begin{equation}\label{eq:lemma1}
  \bigl(1+\delta_t\bigr)\;V^{(w)}_{t+1}(y_t,\,x_t)
  \;\le\;
  V^{(w)}_{t+1}(y_{t-1},\,x_{t-1}),
  \qquad
  \delta_t = \frac{1}{\sqrt{Q_t}-1}\,,
\end{equation}
where both sides use $V_{t+1}$, which evaluates the auxiliary point with~$Q_t$: $z_{t+1} = x + \sqrt{Q_t}\,(x-y)$.

If \textsc{Certify-Progress} detects $f(y_t) > f(y_0)$, a restart is triggered and \textsc{Restart-Handler} overwrites~$(x_t,y_t)$. We write $(x_t^{\mathrm{agd}},\,y_t^{\mathrm{agd}},\,Q_t^{\mathrm{agd}})$ for the iterates and condition number produced by \textsc{AGD-Step} \emph{before} the restart, and $(x_t,\,y_t,\,Q_t)$ for the \emph{final} values stored at iteration~$t$ (after the restart, if one occurred).  When no restart occurs, $(x_t,y_t,Q_t) = (x_t^{\mathrm{agd}},y_t^{\mathrm{agd}},Q_t^{\mathrm{agd}})$. We remark here that although the restart overwrites the previous iterate, it is still enough to just bound the total number of iterations $T$. The total number of gradient evaluations in one epoch of \textsc{Modified-AGD} with~$T$ iterations and~$b$ restarts is at most
\[
  G \;\le\; 2T + 2b
  + 2\bigl\lceil\log_\gamma(\bar{L}_1/L_1^{(0)})\bigr\rceil.
\]
Since $b\le T$, this gives $G\le 4T + O(\log(\bar{L}_1/L_1^{(0)}))$.

The following lemma shows that the auxiliary-point correction strictly reduces the Lyapunov function, since the corrected iterates share the same auxiliary point $z$ and satisfy $f(y_t) < f(y_t^{\mathrm{agd}})$.
\begin{lemma}
\label{lem:V-decrease}
Let $f:\mathbb{R}^d\to\mathbb{R}$ be $L_1$-smooth and let $w\in\mathbb{R}^d$. 
At iteration~$t$:
\begin{enumerate}[label=\textnormal{(\alph*)}]
  \item The Lyapunov auxiliary point is computed at
    line~6 of \textcolor{algoblue}{\textsc{Modified-AGD}}:
    \begin{equation}\label{eq:zprev-def}
      z \;=\; x_t^{\mathrm{agd}} + \sqrt{Q_t^{\mathrm{agd}}}\,
             \bigl(x_t^{\mathrm{agd}} - y_t^{\mathrm{agd}}\bigr).
    \end{equation}
    In the notation above, this is $\hat z_{t+1}(y^{\mathrm{agd}}_t, x^{\mathrm{agd}}_t)$
  \item \textcolor{algoblue}{\textsc{Certify-Progress}} detects
    $f\!\bigl(y_t^{\mathrm{agd}}\bigr)>f(y_0)$ and triggers a restart.
  \item \textcolor{algoblue}{\textsc{Restart-Handler}} receives
    $z$ and $Q_t^{\mathrm{agd}}$ as inputs, and produces new iterates
    $(x_t,\,y_t)$ satisfying the partial-momentum invariant
    \begin{equation}\label{eq:pm-invariant}
      x_t + \sqrt{Q_t^{\mathrm{agd}}}\,(x_t-y_t) \;=\; z,
    \end{equation}
    together with $f(y_t)\le f(y_{t-1})$.
  \item Iteration $t-1$ did not trigger a restart:
    $f(y_{t-1})\le f(y_0)$.
\end{enumerate}
Then,
\begin{equation}\label{eq:V-decrease}
  \hat{V}^{(w)}_{t+1}(y_t,\,x_t)
  \;<\;
  \hat{V}^{(w)}_{t+1}\!\bigl(y_t^{\mathrm{agd}},\,x_t^{\mathrm{agd}}\bigr).
\end{equation}
\end{lemma}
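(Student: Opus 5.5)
The plan is to write out both sides of \eqref{eq:V-decrease} from the definition \eqref{eq:lyapunov-def}, with the auxiliary point evaluated at $Q_t^{\mathrm{agd}}$, and compare them term by term. By definition of $\hat V_{t+1}$,
\[
\hat V^{(w)}_{t+1}(y,x) = f(y) - f(w) + \tfrac{\sigma}{2}\bigl\|x + \sqrt{Q_t^{\mathrm{agd}}}(x-y) - w\bigr\|^2 .
\]
By hypothesis (a), the auxiliary point of the pre-restart pair is $z$. By the invariant \eqref{eq:pm-invariant} in hypothesis (c), the auxiliary point of the post-restart pair $(x_t,y_t)$ is also $z$. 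The invariant follows directly from line~4 of Algorithm~\ref{alg:restart-handler}: substituting $x_t = (z+\sqrt{Q}\,y_t)/(1+\sqrt{Q})$ gives $(1+\sqrt Q)x_t - \sqrt Q\, y_t = z$. The quadratic terms and the $-f(w)$ terms therefore cancel exactly, and the difference of the two sides equals $f(y_t) - f(y_t^{\mathrm{agd}})$.

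It then remains to show $f(y_t) < f(y_t^{\mathrm{agd}})$, which I obtain by chaining the hypotheses. From (c) we have $f(y_t)\le f(y_{t-1})$. From (d) we have $f(y_{t-1})\le f(y_0)$. From (b) we have $f(y_0) < f(y_t^{\mathrm{agd}})$, since the restart test in line~2 of \textsc{Certify-Progress} is the strict inequality $f(y_t^{\mathrm{agd}})>f(y_0)$. Combining these gives $f(y_t)\le f(y_{t-1})\le f(y_0)<f(y_t^{\mathrm{agd}})$, and hence \eqref{eq:V-decrease} holds with strict inequality.

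The only point requiring care is to confirm that both sides of \eqref{eq:V-decrease} use the same condition number $Q_t^{\mathrm{agd}}$ in the auxiliary point, as the hat notation specifies. They must, because the final $Q_t$ may exceed $Q_t^{\mathrm{agd}}$ after the backtracking in Algorithm~\ref{alg:restart-handler}; this is exactly why the correction in line~4 is built with $Q_t^{\mathrm{agd}}$. I would also note that hypothesis (c), $f(y_t)\le f(y_{t-1})$, follows from the loop in lines~2--3 of Algorithm~\ref{alg:restart-handler}. That loop terminates with $f(y_t)\le f(y_{t-1})-\frac{1}{2L_1}\|\nabla f(y_{t-1})\|^2$, and it is guaranteed to terminate once $L_1$ reaches the true Lipschitz constant. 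Beyond these observations the argument is a direct computation, and I expect no real obstacle.
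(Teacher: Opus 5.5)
Your proposal is correct and follows the paper's proof step for step. You first note that both evaluations of $\hat V^{(w)}_{t+1}$ share the auxiliary point $z$, by (a) and the partial-momentum invariant, so their difference reduces to $f(y_t)-f(y_t^{\mathrm{agd}})$. You then chain $f(y_t)\le f(y_{t-1})\le f(y_0)<f(y_t^{\mathrm{agd}})$ using the \textsc{Restart-Handler} descent step, (d), and the strict restart test (b), which is exactly what the paper does; your extra check that line~4 of \textsc{Restart-Handler} produces the invariant is a harmless addition, since the lemma already takes it as hypothesis (c).
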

 
\begin{proof}
We show that both evaluations of~$\hat V_{t+1}$ share the same auxiliary point, so that their difference reduces to a comparison of function values.
 
\emph{Auxiliary points coincide.}\;
Since $\hat V_{t+1}$ uses $Q_t^{\mathrm{agd}}$, the auxiliary point at the new iterates is
\[
  \hat z_{t+1}(y_t,x_t)
  \;=\; x_t + \sqrt{Q_t^{\mathrm{agd}}}\,(x_t - y_t)
  \;\stackrel{\eqref{eq:pm-invariant}}{=}\; z.
\]
Pre-restart, $\hat z_{t+1}\!\bigl(y_t^{\mathrm{agd}},x_t^{\mathrm{agd}}\bigr) = z$ by~\eqref{eq:zprev-def}. Therefore both auxiliary points equal~$z$, and
\begin{equation}\label{eq:V-diff-f-diff}
  \hat V^{(w)}_{t+1}(y_t,x_t)
  \;-\;
  \hat V^{(w)}_{t+1}\!\bigl(y_t^{\mathrm{agd}},x_t^{\mathrm{agd}}\bigr)
  \;=\;
  f(y_t) - f\!\bigl(y_t^{\mathrm{agd}}\bigr).
\end{equation}
 
\emph{Function values decrease.}\;
Lines~1--4 of \textsc{Restart-Handler} perform a steepest-descent step from~$y_{t-1}$ with backtracking, guaranteeing
\begin{equation}\label{eq:restart-descent}
  f(y_t) \;\le\; f(y_{t-1})
  - \frac{1}{2L_1^{(t)}}\,\bigl\|\nabla f(y_{t-1})\bigr\|^2
  \;\le\; f(y_{t-1})
  \;\le\; f(y_0),
\end{equation}
where the last inequality uses assumption~(d). Assumption~(b) gives $f\!\bigl(y_t^{\mathrm{agd}}\bigr) > f(y_0)$, so
\[
  f(y_t) \;\le\; f(y_0) \;<\; f\!\bigl(y_t^{\mathrm{agd}}\bigr).
\]
Substituting into~\eqref{eq:V-diff-f-diff} yields the strict inequality~\eqref{eq:V-decrease}.
\end{proof}

Now that we can compare potential functions before and after a restart we can rederive the recursion at a restarted iteration. We use Lemmas~\ref{lemma-5} and~\ref{lemma-6} to convert $\hat V_t$ back to $V_t$. 

\begin{lemma}
\label{lem:combined-restart}
Under the hypotheses of Lemma~\ref{lem:V-decrease}, let $Q_t\ge Q_t^{\mathrm{agd}}$ be the final condition number after \textsc{Restart-Handler}. Assume that the pathwise strong convexity conditions in \eqref{eq:pathwise-sc} hold. Write $(x_t,y_t)$ for the post-restart iterates produced by Algorithm~\ref{alg:restart-handler}.
 
\medskip\noindent
\textbf{(a) General~$w$.}
\begin{equation}\label{eq:combined-general}
  V^{(w)}_{t+1}(y_t,x_t)
  \;\le\;
  \frac{1}{1+\hat\delta_t}\;
  \frac{Q_t^{3/2}}{Q_{t-1}^{3/2}}\;
  V^{(w)}_t(y_{t-1},x_{t-1})
  \;+\;
  \frac{2Q_t^{3/2}}{\sigma\,Q_{t-1}^{3/2}}\;
  \|\nabla f(w)\|^2,
\end{equation}
where $\hat\delta_t = 1/(\sqrt{Q_t^{\mathrm{agd}}}-1)$.
 
\medskip\noindent
\textbf{(b) Descent reference point.}\;
If additionally $w$ satisfies $f(w)\le f(y_k) - \frac{1}{2L_1^{(k)}}\|\nabla f(y_k)\|^2$ for $k=0,\ldots,t$, then
\begin{equation}\label{eq:combined-descent}
  V^{(w)}_{t+1}(y_t,x_t)
  \;\le\;
  \frac{1}{1+\hat\delta_t}\;
  (1+2\hat\Delta_t)(1+2\Delta'_t)\;
  \frac{Q_t^{3/2}}{Q_{t-1}^{3/2}}\;
  V^{(w)}_t(y_{t-1},x_{t-1}),
\end{equation}
where $\hat\Delta_t := Q_t - Q_t^{\mathrm{agd}} \ge 0$ and $\Delta'_t := Q_t^{\mathrm{agd}} - Q_{t-1} \ge 0$.
\end{lemma}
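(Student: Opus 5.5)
The plan is to chain three comparisons: the one-step AGD contraction with the pre-restart condition number, the restart comparison of Lemma~\ref{lem:V-decrease}, and the condition-number conversions of Lemmas~\ref{lemma-5} and~\ref{lemma-6}. In the pre-restart step, \textsc{AGD-Step} produced $(x_t^{\mathrm{agd}},y_t^{\mathrm{agd}})$ from $(x_{t-1},y_{t-1})$ using $Q_t^{\mathrm{agd}}$. The rewritten Lemma~\ref{lem:app-AGD-Lyap-1}, \eqref{eq:lemma1}, with every auxiliary point evaluated at $Q_t^{\mathrm{agd}}$ (the hatted potential), then gives
\[
(1+\hat\delta_t)\,\hat V^{(w)}_{t+1}(y_t^{\mathrm{agd}},x_t^{\mathrm{agd}})\le \hat V^{(w)}_{t+1}(y_{t-1},x_{t-1}).
\]
Lemma~\ref{lem:V-decrease} then lets us replace the left-hand argument by the post-restart pair, because both pairs share the auxiliary point $z$. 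This yields
\[
\hat V^{(w)}_{t+1}(y_t,x_t)\le \tfrac{1}{1+\hat\delta_t}\,\hat V^{(w)}_{t+1}(y_{t-1},x_{t-1}).
\]

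Next we convert the two hatted potentials to unhatted ones. There are two conversions, each an instance of the condition-number-increase step already proved.
\begin{enumerate}[label=(\roman*)]
\item On the right-hand side, $\hat V_{t+1}(y_{t-1},x_{t-1})$ uses $Q_t^{\mathrm{agd}}$, while $V_t(y_{t-1},x_{t-1})$ uses $Q_{t-1}\le Q_t^{\mathrm{agd}}$. Lemma~\ref{lemma-5}, with $(Q_{k-1},Q_k)$ replaced by $(Q_{t-1},Q_t^{\mathrm{agd}})$, bounds it by
\[
\tfrac{(Q_t^{\mathrm{agd}})^{3/2}}{Q_{t-1}^{3/2}}V_t+\tfrac{(Q_t^{\mathrm{agd}})^{3/2}}{\sigma Q_{t-1}^{3/2}}\|\nabla f(w)\|^2 .
\]
The proof of that lemma is purely algebraic in the two condition numbers. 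It only uses monotonicity and the pathwise inequality \eqref{pathwise sc w} at the relevant $y$, which is available from \eqref{eq:pathwise-sc}.
\item On the left-hand side, $V_{t+1}(y_t,x_t)$ uses the final $Q_t\ge Q_t^{\mathrm{agd}}$. The same lemma with the pair $(Q_t^{\mathrm{agd}},Q_t)$ bounds it by
\[
\tfrac{Q_t^{3/2}}{(Q_t^{\mathrm{agd}})^{3/2}}\hat V_{t+1}(y_t,x_t)+\tfrac{Q_t^{3/2}}{\sigma (Q_t^{\mathrm{agd}})^{3/2}}\|\nabla f(w)\|^2 .
\]
\end{enumerate}
Multiplying through, the ratios of condition numbers telescope to $Q_t^{3/2}/Q_{t-1}^{3/2}$. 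This gives the factor $\tfrac{1}{1+\hat\delta_t}\tfrac{Q_t^{3/2}}{Q_{t-1}^{3/2}}$ on $V_t(y_{t-1},x_{t-1})$ and two gradient slack terms. The first slack term is damped by $\tfrac{1}{1+\hat\delta_t}\le1$. Each slack term is at most $\tfrac{Q_t^{3/2}}{\sigma Q_{t-1}^{3/2}}\|\nabla f(w)\|^2$, since $Q_{t-1}\le Q_t^{\mathrm{agd}}\le Q_t$, so together they give the factor $2$ in \eqref{eq:combined-general}.

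For part (b), we repeat the same chain but use Lemma~\ref{lemma-6} in place of Lemma~\ref{lemma-5} for both conversions. Each conversion then contributes a multiplicative factor and no additive slack. The step $Q_{t-1}\to Q_t^{\mathrm{agd}}$ contributes $(1+2\Delta'_t)$, and the step $Q_t^{\mathrm{agd}}\to Q_t$ contributes $(1+2\hat\Delta_t)$. Multiplying these gives \eqref{eq:combined-descent}. Lemma~\ref{lemma-6} requires the descent inequality $f(w)\le f(y)-\tfrac{1}{2L_1}\|\nabla f(y)\|^2$ at the point $y$ being converted, and nonnegativity of the hatted potentials. We therefore need the descent assumption at $y_{t-1}$ and at the post-restart $y_t$, which is why the hypothesis is stated for $k=0,\dots,t$. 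The proof of Lemma~\ref{lemma-6} uses $L_1^{(k)}$ through $Q_k$, so we will check that it applies with the intermediate value $Q_t^{\mathrm{agd}}$. This is fine, because the descent condition with the smaller $L_1^{(k)}$ implies it with a larger one.

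The main obstacle is justifying that Lemmas~\ref{lemma-5} and~\ref{lemma-6} hold with an arbitrary nondecreasing pair of condition numbers applied at a fixed iterate pair. These are not consecutive algorithmic indices, and the post-restart $(x_t,y_t)$ was not generated by an AGD step. I would re-read their proofs to confirm three things:
\begin{itemize}
\item only the relation $x+\sqrt{Q}(x-y)$ is used;
\item the only ordering required is $Q_{\text{old}}\le Q_{\text{new}}$;
\item pathwise strong convexity is needed only at the pairs $(w,y)$ and $(y,w)$ for the current $y$.
\end{itemize}
For the post-restart $y_t$, the last point needs \eqref{eq:pathwise-sc} extended to index $t$, which I would state explicitly as part of "the pathwise conditions hold".
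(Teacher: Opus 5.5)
Your proposal is correct and follows the paper's proof essentially step for step. The chain is the hatted one-step contraction from Lemma~\ref{lem:app-AGD-Lyap-1} at $Q_t^{\mathrm{agd}}$, then the restart comparison of Lemma~\ref{lem:V-decrease}, then two condition-number conversions via Lemma~\ref{lemma-5} (resp.\ Lemma~\ref{lemma-6}) at the old and new iterates; the ratios cancel to $Q_t^{3/2}/Q_{t-1}^{3/2}$, and each of the two slack terms is at most $\frac{Q_t^{3/2}}{\sigma Q_{t-1}^{3/2}}\|\nabla f(w)\|^2$. Your side remarks are also correct and make explicit what the paper's proof uses only implicitly: the conversion at the post-restart $y_t$ needs pathwise strong convexity (and, in (b), the descent condition) at index $t$, and the descent condition at $y_{t-1}$ with $L_1^{(t-1)}$ implies it with the larger $Q_t^{\mathrm{agd}}$.
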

 
\begin{proof}
We compose three bounds.
 
\medskip
\noindent\emph{Step~1 (Lemma~\ref{lem:app-AGD-Lyap-1}).}\;
The AGD step uses $Q_t^{\mathrm{agd}}$. Applying~\eqref{eq:lemma1} with $Q_t\leftarrow Q_t^{\mathrm{agd}}$:
\begin{equation}\label{eq:chain-step1}
  (1+\hat\delta_t)\;
  \hat{V}^{(w)}_{t+1}(y_t^{\mathrm{agd}},x_t^{\mathrm{agd}})
  \;\le\;
  \hat{V}^{(w)}_{t+1}(y_{t-1},x_{t-1}),
  \qquad
  \hat\delta_t = \frac{1}{\sqrt{Q_t^{\mathrm{agd}}}-1}\,.
\end{equation}
 
\medskip
\noindent\emph{Step~2 (Lemma~\ref{lem:V-decrease}).}\;
The restart preserves $\hat{z}_{t+1}$:
\begin{equation}\label{eq:chain-step2}
  \hat{V}^{(w)}_{t+1}(y_t,x_t)
  \;\le\;
  \hat{V}^{(w)}_{t+1}(y_t^{\mathrm{agd}},x_t^{\mathrm{agd}}).
\end{equation}
Combining~\eqref{eq:chain-step1} and~\eqref{eq:chain-step2}:
\begin{equation}\label{eq:chain-12}
  \hat{V}^{(w)}_{t+1}(y_t,x_t)
  \;\le\;
  \frac{1}{1+\hat\delta_t}\;
  \hat{V}^{(w)}_{t+1}(y_{t-1},x_{t-1}).
\end{equation}
 
\medskip
\noindent\emph{Step~3 (Lemma~\ref{lemma-5}: absorb $Q$-changes).}\;
We convert $\hat{V}_{t+1}$ (using $Q_t^{\mathrm{agd}}$) to the standard Lyapunov functions $V_{t+1}$ (using $Q_t$) and $V_t$ (using $Q_{t-1}$).
 
\emph{At the new iterates $(y_t,x_t)$.}\;
Since $Q_t\ge Q_t^{\mathrm{agd}}$, Lemma~\ref{lemma-5} gives
\begin{equation}\label{eq:chain-new1}
  V^{(w)}_{t+1}(y_t,x_t)
  \;\le\;
  \frac{Q_t^{3/2}}{(Q_t^{\mathrm{agd}})^{3/2}}\;
  \hat{V}^{(w)}_{t+1}(y_t,x_t)
  \;+\;
  \frac{Q_t^{3/2}}{\sigma\,(Q_t^{\mathrm{agd}})^{3/2}}\;
  \|\nabla f(w)\|^2.
\end{equation}
 
\emph{At the old iterates $(y_{t-1},x_{t-1})$.}\;
Since $Q_t^{\mathrm{agd}}\ge Q_{t-1}$, Lemma~\ref{lemma-5} gives
\begin{equation}\label{eq:chain-old1}
  \hat{V}^{(w)}_{t+1}(y_{t-1},x_{t-1})
  \;\le\;
  \frac{(Q_t^{\mathrm{agd}})^{3/2}}{Q_{t-1}^{3/2}}\;
  V^{(w)}_t(y_{t-1},x_{t-1})
  \;+\;
  \frac{(Q_t^{\mathrm{agd}})^{3/2}}{\sigma\,Q_{t-1}^{3/2}}\;
  \|\nabla f(w)\|^2.
\end{equation}
 
\medskip
\noindent\emph{Combining.}\;
Substitute~\eqref{eq:chain-12}
into~\eqref{eq:chain-new1}:
\[
  V^{(w)}_{t+1}(y_t,x_t)
  \;\le\;
  \frac{1}{1+\hat\delta_t}\;
  \frac{Q_t^{3/2}}{(Q_t^{\mathrm{agd}})^{3/2}}\;
  \hat{V}^{(w)}_{t+1}(y_{t-1},x_{t-1})
  \;+\;
  \frac{Q_t^{3/2}}{\sigma\,(Q_t^{\mathrm{agd}})^{3/2}}\;
  \|\nabla f(w)\|^2.
\]
Now substitute~\eqref{eq:chain-old1}. The $Q^{\mathrm{agd}}_t$-ratios cancel:
\[
  \frac{Q_t^{3/2}}{(Q_t^{\mathrm{agd}})^{3/2}}\cdot
  \frac{(Q_t^{\mathrm{agd}})^{3/2}}{Q_{t-1}^{3/2}}
  \;=\;\frac{Q_t^{3/2}}{Q_{t-1}^{3/2}}\,,
\]
yielding
\[
  V^{(w)}_{t+1}(y_t,x_t)
  \;\le\;
  \frac{1}{1+\hat\delta_t}\;
  \frac{Q_t^{3/2}}{Q_{t-1}^{3/2}}\;
  V^{(w)}_t(y_{t-1},x_{t-1})
  \;+\;
  C_t\,\|\nabla f(w)\|^2,
\]
where the gradient-slack coefficient is
\[
  C_t
  = \frac{1}{1+\hat\delta_t}\;
    \frac{Q_t^{3/2}}{\sigma\,Q_{t-1}^{3/2}}
  + \frac{Q_t^{3/2}}{\sigma\,(Q_t^{\mathrm{agd}})^{3/2}}
  \;\le\;
  \frac{Q_t^{3/2}}{\sigma\,Q_{t-1}^{3/2}}
  + \frac{Q_t^{3/2}}{\sigma\,Q_{t-1}^{3/2}}
  = \frac{2\,Q_t^{3/2}}{\sigma\,Q_{t-1}^{3/2}}\,,
\]
using $1/(1+\hat\delta_t)\le 1$ and $Q_t^{\mathrm{agd}}\ge Q_{t-1}$. This establishes~\eqref{eq:combined-general}.
 
For~(b), replace Lemma~\ref{lemma-5} by Lemma~\ref{lemma-6} at each sub-step (eliminating the gradient terms and introducing the factor $1+2\Delta_t$ with $\Delta_t = Q_t-Q_{t-1}$).
 
\emph{At the new iterates $(y_t,x_t)$.}\;
Since $Q_t\ge Q_t^{\mathrm{agd}}$, Lemma~\ref{lemma-6} gives
\begin{equation}\label{eq:chain-new}
  V^{(w)}_{t+1}(y_t,x_t)
  \;\le\;
  (1+2\hat\Delta_t)\frac{Q_t^{3/2}}{(Q_t^{\mathrm{agd}})^{3/2}}\;
  \hat{V}^{(w)}_{t+1}(y_t,x_t),
\end{equation}
where $\hat\Delta_t:= Q_t-Q^{\mathrm{agd}}_t\geq0$.
 
\emph{At the old iterates $(y_{t-1},x_{t-1})$.}\;
Since $Q_t^{\mathrm{agd}}\ge Q_{t-1}$, Lemma~\ref{lemma-6} gives
\begin{equation}\label{eq:chain-old}
  \hat{V}^{(w)}_{t+1}(y_{t-1},x_{t-1})
  \;\le\;
  (1+2\Delta'_t)\frac{(Q_t^{\mathrm{agd}})^{3/2}}{Q_{t-1}^{3/2}}\;
  V^{(w)}_t(y_{t-1},x_{t-1}),
\end{equation}
where $\Delta'_t:= Q^{\mathrm{agd}}_t - Q_{t-1} \geq0$.
 
\medskip
\noindent\emph{Combining.}\;
Substitute~\eqref{eq:chain-12} into~\eqref{eq:chain-new}:
\[
  V^{(w)}_{t+1}(y_t,x_t)
  \;\le\;
  \frac{1}{1+\hat\delta_t}(1+2\hat{\Delta}_t)\;
  \frac{Q_t^{3/2}}{(Q_t^{\mathrm{agd}})^{3/2}}\;
  \hat{V}^{(w)}_{t+1}(y_{t-1},x_{t-1}).
\]
Now substitute~\eqref{eq:chain-old}. The $Q^{\mathrm{agd}}_t$-ratios cancel:
\[
  \frac{Q_t^{3/2}}{(Q_t^{\mathrm{agd}})^{3/2}}\cdot
  \frac{(Q_t^{\mathrm{agd}})^{3/2}}{Q_{t-1}^{3/2}}
  \;=\;\frac{Q_t^{3/2}}{Q_{t-1}^{3/2}}\,,
\]
yielding
\[
  V^{(w)}_{t+1}(y_t,x_t)
  \;\le\;
  \frac{1}{1+\hat\delta_t}(1+2\hat{\Delta}_t)(1+2\Delta'_t)\;
  \frac{Q_t^{3/2}}{Q_{t-1}^{3/2}}\;
  V^{(w)}_t(y_{t-1},x_{t-1}).
\]
This establishes~\eqref{eq:combined-descent}.
\end{proof} 

Now we can rebuild the recursions for arbitrary and descent reference points given by the two propositions below.
\begin{proposition}
\label{prop:unified-general}
Let $f:\mathbb{R}^d\to\mathbb{R}$ be $L_1$-smooth. Fix $w\in\mathbb{R}^d$ and assume pathwise strong convexity \eqref{eq:pathwise-sc} holds for $s=0,\ldots,t-1$. Given $x_0=y_0$ and $L_1^{(0)}>\sigma$, with $c\in[1/2,1)$ and restarts handled by Algorithm~\ref{alg:restart-handler}:
\begin{equation}\label{eq:unified-general}
  f(y_t)-f(w)
  \;\le\;
  e^{-t/\sqrt{Q_t}}\;Q^{3/2}_t\;\psi(w)
  \;+\;
  \frac{2Q_{t}^2}{\sigma}\|\nabla f(w)\|^2,
\end{equation}
where $Q_t$ is the final condition number at iteration~$t$ and $\psi(w) = f(y_0)-f(w)+\frac{\sigma}{2}\|w-y_0\|^2$.
\end{proposition}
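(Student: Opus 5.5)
The plan is to mirror the proof of Proposition~\ref{prop:agd-carmon-style}, replacing the one-step recursion at restarted iterations by Lemma~\ref{lem:combined-restart}(a). The first step is a unified one-step bound valid at every iteration $s=1,\dots,t$. At a non-restarted iteration, Lemma~\ref{lem:app-AGD-Lyap-1} (in the shifted form \eqref{eq:lemma1}) combined with Lemma~\ref{lemma-5} gives
\[
V^{(w)}_{s+1}(y_s,x_s)\le \tfrac{1}{1+\delta_s}\tfrac{Q_s^{3/2}}{Q_{s-1}^{3/2}}V^{(w)}_s(y_{s-1},x_{s-1})+\tfrac{Q_s^{3/2}}{\sigma Q_{s-1}^{3/2}}\|\nabla f(w)\|^2 .
\]
At a restarted iteration, \eqref{eq:combined-general} gives the same inequality with $\hat\delta_s$ in place of $\delta_s$ and slack coefficient $2Q_s^{3/2}/(\sigma Q_{s-1}^{3/2})$. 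In both cases $\tfrac{1}{1+\delta}=1-1/\sqrt{Q^{\mathrm{agd}}_s}\le 1-1/\sqrt{Q_t}$, using monotonicity $Q^{\mathrm{agd}}_s\le Q_s\le Q_t$ and the fact that $x\mapsto 1-1/\sqrt x$ is increasing. This yields the uniform recursion
\[
V_{s+1}\le \theta\,\tfrac{Q_s^{3/2}}{Q_{s-1}^{3/2}}V_s+\tfrac{2Q_s^{3/2}}{\sigma Q_{s-1}^{3/2}}\|\nabla f(w)\|^2,\qquad \theta:=1-\tfrac{1}{\sqrt{Q_t}} .
\]

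Next I unroll this from $s=t$ down to $s=1$, starting from $V_1^{(w)}(y_0,x_0)=\psi(w)$ since $x_0=y_0$. The products of the ratios $Q_s^{3/2}/Q_{s-1}^{3/2}$ telescope to at most $Q_t^{3/2}/Q_0^{3/2}\le Q_t^{3/2}$, since $Q_0\ge1$. The homogeneous part is therefore at most $\theta^t Q_t^{3/2}\psi(w)\le e^{-t/\sqrt{Q_t}}Q_t^{3/2}\psi(w)$. The slack terms form a sum $\sum_i \theta^{t-i}\cdot 2Q_t^{3/2}\|\nabla f(w)\|^2/\sigma$, with each partial product of ratios bounded by $Q_t^{3/2}$. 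Bounding the geometric series by $1/(1-\theta)=\sqrt{Q_t}$ gives $\tfrac{2}{\sigma}Q_t^{2}\|\nabla f(w)\|^2$. Finally, $f(y_t)-f(w)\le V^{(w)}_{t+1}(y_t,x_t)$, since the quadratic term is nonnegative, and this gives \eqref{eq:unified-general}.

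The main obstacle is the indexing bookkeeping. The recursion must chain $V_{s+1}(y_s,x_s)$ to $V_s(y_{s-1},x_{s-1})$ consistently, whereas Lemma~\ref{lemma-5} only changes the subscript at fixed iterates, so at non-restart steps I must apply it at $(y_{s-1},x_{s-1})$ to pass from $V_s$ to $V_{s+1}$. I also need to check two things: that the pathwise strong convexity hypothesis \eqref{pathwise sc w} is available for every pair used, including the post-restart $y_s$ (I would argue these are the iterates $y_0^{t-1}$ covered by \eqref{eq:pathwise-sc}), and that the $\psi(w)$ initial value does not require $f(y_0)-f(w)\ge0$. The factor~$2$ in the slack comes only from restarted steps, and taking the worse constant uniformly is harmless.
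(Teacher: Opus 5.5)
Your proposal is correct and is essentially the paper's proof. The paper likewise applies Lemma~\ref{lem:combined-restart}(a) at every step; at non-restarted steps this reduces to your Lemma~\ref{lem:app-AGD-Lyap-1}$+$Lemma~\ref{lemma-5} chain. It then bounds $1/(1+\hat\delta)\le 1-1/\sqrt{Q_t}$, telescopes the $Q$-ratios, sums the geometric series to get $\sqrt{Q_t}$, and concludes with $f(y_t)-f(w)\le V^{(w)}_{t+1}(y_t,x_t)$.

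One check you left open, which the paper also uses silently, needs a definite answer. The identity $V^{(w)}_1(y_0,x_0)=\psi(w)$ needs no sign condition. However, the step $\theta^t Q_0^{-3/2}Q_t^{3/2}\psi(w)\le e^{-t/\sqrt{Q_t}}Q_t^{3/2}\psi(w)$ does require $\psi(w)\ge 0$. This holds where the result is applied, namely $w=y_0$ with $\psi=0$ in Corollary~\ref{cor:modified_cor1}, but it is not automatic for arbitrary $w$.
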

 
\begin{proof}
By Lemma~\ref{lem:combined-restart}(a), every iteration $\forall k<t$ (restarted or not) satisfies
\[
  V^{(w)}_{k+2}(y_{k+1},x_{k+1})
  \;\le\;
  \frac{1}{1+\hat\delta_{k+1}}\;
  \frac{Q_{k+1}^{3/2}}{Q_k^{3/2}}\;
  V^{(w)}_{k+1}(y_k,x_k)
  \;+\;
  \frac{2Q_{k+1}^{3/2}}{\sigma\,Q_k^{3/2}}\;
  \|\nabla f(w)\|^2,
\]
where $\hat\delta_{k+1}\ge 1/(\sqrt{Q_t}-1)$. Using $1/(1+\hat\delta_{k+1}) \le 1-1/\sqrt{Q_t}$ (valid since $Q_{k+1}\le Q_t$ for all $k$) and telescoping the $Q$-ratio product $\prod Q_{k+1}^{3/2}/Q_k^{3/2} = Q_t^{3/2}/Q_0^{3/2}$, by induction on $t$, we obtain that $\forall t>0$,

\begin{equation*}
\begin{aligned}
V^{(w)}_{t+1}(y_t,x_t)
\;\le\;&
\Bigl(1-\frac1{\sqrt{Q_{t}}}\Bigr)^{t}\frac{Q_{t}^{3/2}}{Q_{0}^{3/2}}\,
V^{(w)}_{0}(y_0,x_0)
\;+\;\frac{2\|\nabla f(w)\|^2}{\sigma}\;
\sum_{i=0}^{t}
\Bigl(1-\frac1{\sqrt{Q_{t}}}\Bigr)^{t-i}\frac{Q_{t}^{3/2}}{Q_{i-1}^{3/2}}\\
\;\le\;&
\Bigl(1-\frac1{\sqrt{Q_{t}}}\Bigr)^{t}\frac{Q_{t}^{3/2}}{Q_{0}^{3/2}}\,
V^{(w)}_{0}(y_0,x_0)
\;+\;\frac{2\|\nabla f(w)\|^2}{\sigma}\;
\frac{Q_{t}^{3/2}}{Q_{0}^{3/2}}\sum_{i=0}^{t}
\Bigl(1-\frac1{\sqrt{Q_{t}}}\Bigr)^{t-i}\\
\;\le\;&
\Bigl(1-\frac1{\sqrt{Q_{t}}}\Bigr)^{t}\frac{Q_{t}^{3/2}}{Q_{0}^{3/2}}\,
V^{(w)}_{0}(y_0,x_0)
\;+\;\frac{2\|\nabla f(w)\|^2}{\sigma}\;
\frac{Q_{t}^{3/2}}{Q_{0}^{3/2}}
\sqrt{Q_{t}}\Bigl(1-\Bigl(1-\frac1{\sqrt{Q_{t}}}\Bigr)^{t+1}\Bigr)\\
\;\le\;&
\Bigl(1-\frac1{\sqrt{Q_{t}}}\Bigr)^{t}Q_{t}^{3/2}\,
\psi(w)
\;+\;\frac{2\|\nabla f(w)\|^2}{\sigma}\;
Q_{t}^{2}\Bigl(1-\Bigl(1-\frac1{\sqrt{Q_{t}}}\Bigr)^{t+1}\Bigr)\\
\;\le\;&
e^{-t/\sqrt{Q_t}}Q_{t}^{3/2}\,
\psi(w)
\;+\;\frac{2Q_{t}^{2}}{\sigma}\;\|\nabla f(w)\|^2,
\end{aligned}
\end{equation*}
where we have used in the penultimate inequality that $Q_0\ge 1$ and in the final inequality that $1-(1-1/\sqrt{Q_t})^t\le 1$ and $(1-\frac{1}{x})^t \leq e^{t/x}$, $\forall t \geq0,x>1$. Finally, using the fact that $f(y_t)-f(w)\le V_{t+1}^{(w)}(y_t,x_t)$ yields~\eqref{eq:unified-general}.
\end{proof}

\begin{proposition}
\label{prop:unified-descent}
In addition to Proposition~\ref{prop:unified-general}, let $w$ satisfy $f(w)\le f(y_k)-\frac{1}{2L_1^{(k)}}\|\nabla f(y_k)\|^2$ for $k=0,\ldots,t-1$. Then:
\begin{equation}\label{eq:unified-descent}
  f(y_t)-f(w)
  \;\le\;
  e^{-t/\sqrt{Q_t}}\;(3Q_t)^m\;
  \frac{Q_t^{3/2}}{Q_0^{3/2}}\;\psi(w),
\end{equation}
where $m$ counts the total number of condition-number increases with $m\le\lfloor\log_{\min(\gamma, 1/\rho)}(L_1/L_1^{(0)})\rfloor+1$.
\end{proposition}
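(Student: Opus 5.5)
I would mirror the proof of Proposition~\ref{prop:unified-general}, but use part (b) of Lemma~\ref{lem:combined-restart} in place of part (a). For a non-restarted iteration, combining \eqref{eq:lemma1} with Lemma~\ref{lemma-6} gives
\[
V^{(w)}_{k+2}(y_{k+1},x_{k+1})\le \tfrac{1}{1+\delta_{k+1}}(1+2\Delta_{k+1})\tfrac{Q_{k+1}^{3/2}}{Q_k^{3/2}}V^{(w)}_{k+1}(y_k,x_k),
\]
with no gradient slack. For a restarted iteration, Lemma~\ref{lem:combined-restart}(b) gives the same form with $(1+2\Delta_{k+1})$ replaced by $(1+2\hat\Delta_{k+1})(1+2\Delta'_{k+1})$. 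Both Lemmas require $f(y_k)-f(w)\ge0$ and the descent inequality for $w$. These are supplied by the hypothesis $f(w)\le f(y_k)-\frac{1}{2L_1^{(k)}}\|\nabla f(y_k)\|^2$, which in particular gives nonnegativity of the gap, together with pathwise strong convexity for the pairs $(w,y_k)$.

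Unrolling the recursion from $k=0$ to $t-1$ proceeds in three steps.
\begin{enumerate}
\item Since $\hat\delta_{k+1}\ge 1/(\sqrt{Q_t}-1)$ by monotonicity of $Q$, each contraction factor is bounded by $1-1/\sqrt{Q_t}$, giving $(1-1/\sqrt{Q_t})^t\le e^{-t/\sqrt{Q_t}}$.
\item The $Q$-ratios telescope to $Q_t^{3/2}/Q_0^{3/2}$. In the restarted case, the intermediate $Q^{\mathrm{agd}}$ ratios already cancel inside Lemma~\ref{lem:combined-restart}.
\item Every factor $(1+2\Delta)$ with $\Delta=0$ equals $1$. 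Each nonzero increment, whether $\Delta_k$, $\hat\Delta_k$ or $\Delta'_k$, is exactly one condition-number increase recorded by lines~5 and~11 of \textsc{Modified-AGD}, so it contributes a factor at most $1+2Q_t\le 3Q_t$, since $Q_t\ge1$.
\end{enumerate}
The product is therefore at most $(3Q_t)^m$. Finally, $V_0^{(w)}(y_0,x_0)=\psi(w)$ because $x_0=y_0$ gives $z=y_0$, and $f(y_t)-f(w)\le V^{(w)}_{t+1}(y_t,x_t)$ by dropping the nonnegative quadratic term. Together these yield \eqref{eq:unified-descent}.

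For the bound on $m$, I would argue as in Proposition~\ref{prop:better}. Every increase of $L_1^{(\cdot)}$ multiplies it either by at least $1/\rho$ (in the adaptive line search, since $\hat\rho\le\rho$ whenever it is not clipped) or by $\gamma$ (in the $\zeta_t$ and restart backtracking). Each increase is therefore by a factor of at least $\min(\gamma,1/\rho)$. The estimate stays bounded, since the backtracking stops once $L_1^{(\cdot)}\ge L_1$ up to the constant in $\bar L_1$. This gives $m\le\lfloor\log_{\min(\gamma,1/\rho)}(L_1/L_1^{(0)})\rfloor+1$, where the $+1$ absorbs the final overshoot past $L_1$.

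The main obstacle is the bookkeeping in the restarted iterations. A single restart may produce two increases, $\Delta'_t$ in \textsc{AGD-Step} and $\hat\Delta_t$ in \textsc{Restart-Handler}, and I must check that $m$ counts both. I also need to verify that the descent hypothesis on $w$ is available at the post-restart $y_t$ when Lemma~\ref{lemma-6} is applied at the new iterates. This is why Lemma~\ref{lem:combined-restart}(b) asks for the descent condition up to $k=t$, and I would either strengthen the stated index range accordingly or note that $w^{\min}_t$ is updated after the restart in line~12, so the condition holds there by construction. A secondary subtlety is that the clipping $\hat\rho=\varepsilon_{\min}$ only makes increases larger, so the $\min(\gamma,1/\rho)$ base still lower-bounds every increment.
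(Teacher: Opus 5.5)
Your proposal is correct and follows the paper's proof essentially step for step. You apply Lemma~\ref{lem:combined-restart}(b) at every iteration, with the non-restarted case being $\hat\Delta=0$, and bound each contraction by $1-1/\sqrt{Q_t}$. You then telescope the $Q$-ratios and bound the at most $m$ nontrivial factors $1+2\Delta$ by $3Q_t$. Finally, you bound $m$ using the geometric growth of $L_1^{(\cdot)}$ by at least $\min(\gamma,1/\rho)$.

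Your concern about needing the descent condition at the post-restart $y_t$ is a legitimate indexing point the paper passes over. It can be avoided more simply than you suggest. At a restarted final iteration, stop at $f(y_t)-f(w)\le \hat V^{(w)}_{t+1}(y_t,x_t)\le \frac{1}{1+\hat\delta_t}\hat V^{(w)}_{t+1}(y_{t-1},x_{t-1})$. This uses the hypotheses only up to $k=t-1$ and gives a bound no larger than \eqref{eq:unified-descent}.
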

 
\begin{proof}
By Lemma~\ref{lem:combined-restart}(b), every iteration satisfies
\[
  V^{(w)}_{k+2}(y_{k+1},x_{k+1})
  \;\le\;
  \frac{1}{1+\hat\delta_{k+1}}\;(1+2\hat\Delta_{k+1})(1+2\Delta'_{k+1})\;
  \frac{Q_{k+1}^{3/2}}{Q_k^{3/2}}\;
  V^{(w)}_{k+1}(y_k,x_k),
\]
where $\hat\Delta_k := Q_k - Q_k^{\mathrm{agd}} \ge 0$ and $\Delta'_k := Q_k^{\mathrm{agd}} - Q_{k-1} \ge 0$. By induction:
\[
  V^{(w)}_{t+1}(y_t,x_t)
  \;\le\;
  \Bigl(1-\frac{1}{\sqrt{Q_t}}\Bigr)^{\!t}
  \Bigl[\prod_{k:\hat\Delta_k>0}(1+2\hat\Delta_k)\Bigr]
  \Bigl[\prod_{k:\Delta'_k>0}(1+2\Delta'_k)\Bigr]\;
  \frac{Q_t^{3/2}}{Q_0^{3/2}}\;\psi(w).
\]
Since $1+2\hat\Delta_k\le 3Q_t$, $1+2\Delta'_k\le 3Q_t$ and both products have at most $m$ terms combined, $\Bigl[\prod_{k:\hat\Delta_k>0}(1+2\hat\Delta_k)\Bigr]\Bigl[\prod_{k:\Delta'_k>0}(1+2\Delta'_k)\Bigr]\le(3Q_t)^m$. The bound on~$m$ follows from monotonicity of~$L_1^{(t)}$ with a $\gamma$ (constant) or $\geq 1/\rho$ factor of expansion.
\end{proof}

\subsubsection{Bounding the Inner Iterations}\label{sec:inner_iters}
Given the two restarted progress bounds above we are now ready to bound the total number of \textsc{Modified-AGD} iterations.

\begin{corollary}\label{cor:modified_cor1}
Let $f : \mathbb{R}^d \to \mathbb{R}$ be $L_1$-smooth, let $y_0 \in \R^d, \epsilon>0$ and $0<\sigma\leq L_1$. Let $(x_0^{t},y_0^{t},u,v) = \textcolor{algoblue}{\textsc{Modified-AGD}}(f,y_0,\varepsilon,L^{(0)}_1,\sigma, \gamma)$, and define
\[
  Q_t := L_1^{(t)}/\sigma, \qquad
  \bar L_1
  \;:=\;
  \max\Bigl\{L_1^{(0)},\frac{L_1}{2(1-c)\rho}\Bigr\},
  \qquad
  \bar Q := \frac{\bar L_1}{\sigma}.
\]

\begin{enumerate}
\item[(1)]
The number of AGD steps $t$ satisfies
\begin{align*}
t &\;\le\;
1+\max\!\left\{0,\;
\sqrt{Q_{t-1}}\,
\log\!\left(
\frac{2L_1^{(t-1)}\,Q_{t-1}^{3/2}(3Q_{t-1})^m\,\psi(w^{\min}_{t-1})}
{\varepsilon^2}
\right)
\right\}
\\&\;\le\;
1+\max\!\left\{0,\;
\sqrt{\frac{\bar L_1}{\sigma}}\,
\log\!\left(
\frac{2L_1^{(t-1)}\,Q_{t-1}^{3/2}(3Q_{t-1})^m\,\psi(w^{\min}_{t-1})}
{\varepsilon^2}
\right)
\right\},
\end{align*}
where $\psi(w)=f(y_0)-f(w)+\frac{\sigma}{2}\|w-y_0\|^2$ is as in line 3 of \textcolor{algoblue}{\textsc{Certify-Progress}} and $m\le\lfloor\log_{\min(\gamma, 1/\rho)}(\bar{L}_1/L_1^{(0)})\rfloor+1$. If $u,v\neq \textsc{Null}$ (non-convexity was detected), then

\item[(2)]
\begin{equation}
\label{algo sc check}
f(u)
<
f(v) + \langle\nabla f(v),u-v\rangle
+ \frac{\sigma}{2}\|u-v\|^2,
\end{equation}
where for some $0 \le j < t$, the pair $(u,v)$ is one of
\[
(y_j,x_j),\quad (w_t,x_j),\quad (y_j,w_t),\quad (w_t,y_j),
\]
with $w_t$ defined on line 8 of \textcolor{algoblue}{\textsc{Modified-AGD}}. Moreover,
\item[(3)]
\begin{equation*}
\max\{f(y_1),\ldots,f(y_{t-1}),f(u)\}
\;\le\;
f(y_0).
\end{equation*}
\end{enumerate}
\end{corollary}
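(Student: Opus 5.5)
Part (1) follows the main-text argument for Corollary~\ref{cor:modified_cor1_main}. If $t=1$ there is nothing to prove. For $t>1$, the algorithm did not return at iteration $t-1$. Hence the stopping test on line~16 of \textsc{Modified-AGD} failed, giving $\|\nabla f(y_{t-1})\|>\varepsilon$. Also, \textsc{Certify-Progress} returned \textsc{Null} or \textsc{Restart}, so the line~4 test failed. A restart only replaces $y_{t-1}$ by a point for which the test is checked in the next round, so we use the final stored values. This yields
\[
\varepsilon^2<\|\nabla f(y_{t-1})\|^2\le 2L_1^{(t-1)}(3Q_{t-1})^mQ_{t-1}^{3/2}\psi(w^{\min}_{t-1})e^{-(t-1)/\sqrt{Q_{t-1}}}.
\]
Taking logarithms and rearranging gives the first bound. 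If the logarithm is negative, the inequality is impossible unless $t-1\le 0$, which accounts for the $\max\{0,\cdot\}$. The second bound then follows from $Q_{t-1}\le \bar L_1/\sigma$, which is the step-size corollary of Proposition~\ref{prop:1} together with the $\gamma$-increases, which are only triggered while $L_1^{(t)}<L_1$.

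For part (2), I argue by contraposition. Suppose every pair in $\{(y_j,x_j),(w_t,x_j),(y_j,w_t),(w_t,y_j)\}$, $j<t$, satisfies \eqref{eq:pathwise-sc}. Then the hypotheses of Propositions~\ref{prop:unified-general} and~\ref{prop:unified-descent} hold with $w=w_t$. Consider first the case $w_t=y_0$, returned on line~1. Proposition~\ref{prop:unified-general} with $w=y_0$ has $\psi(y_0)=0$, so $f(y_t)\le f(y_0)+\tfrac{2Q_t^2}{\sigma}\|\nabla f(y_0)\|^2$, contradicting the line~1 trigger. Now consider the case $w_t=w^{\min}_t$, returned on line~4. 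By construction $f(w^{\min}_t)\le f(\zeta_k)\le f(y_k)-\tfrac{1}{2L_1^{(k)}}\|\nabla f(y_k)\|^2$ for the relevant $k$, so Proposition~\ref{prop:unified-descent} applies. I then combine it with the descent inequality $\tfrac{1}{2L_1^{(t)}}\|\nabla f(y_t)\|^2\le f(y_t)-f(\zeta_t)\le f(y_t)-f(w^{\min}_t)$. This bounds the left side of the line~4 test by $e^{-t/\sqrt{Q_t}}(3Q_t)^mQ_t^{3/2}Q_0^{-3/2}\psi(w^{\min}_t)$, which is at most the threshold because $Q_0\ge1$. That again contradicts the trigger. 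Hence some pair violates \eqref{eq:pathwise-sc}, and \textsc{Find-Witness} returns it.

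\textbf{Main obstacle.} The delicate point is the descent condition. Proposition~\ref{prop:unified-descent} needs it for every $k\le t-1$ with the same $w$. Since $w^{\min}_t$ is a running minimum over $\zeta$'s, I would show that $f(w^{\min}_t)\le f(\zeta_k)$ and check that $L_1^{(k)}$ here is the final, post-restart estimate used in $\zeta_k$.

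For part (3), the restart mechanism guarantees $f(y_s)\le f(y_0)$ for all $s<t$. Indeed, any violation triggers line~2, and \textsc{Restart-Handler} produces $f(y_s)\le f(y_{s-1})\le f(y_0)$, as in \eqref{eq:restart-descent}. At iteration $t$, a witness is returned only if line~2 did not fire, so $f(y_t)\le f(y_0)$; the exception is line~1, but there $u$ is not $y_t$. Finally, $u\in\{y_j,w_t\}$, and each option is bounded by $f(y_0)$: $f(y_j)\le f(y_0)$ as above, $w_t=y_0$ trivially, and $f(w^{\min}_t)\le f(\zeta_k)\le f(y_k)\le f(y_0)$ for $w_t=w^{\min}_t$.
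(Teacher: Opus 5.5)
Your proof follows the paper's in all three parts. In (1) you combine the failed line-16 and line-4 tests at iteration $t-1$ and rearrange. In (2) you contradict Proposition~\ref{prop:unified-general} using $\psi(y_0)=0$, or Proposition~\ref{prop:unified-descent} using $f(w^{\min}_t)\le f(\zeta_t)\le f(y_t)-\frac{1}{2L_1^{(t)}}\|\nabla f(y_t)\|^2$. In (3) you use the line-2/restart monotonicity and the running minimum.

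In (2) and (3) you are more careful than the paper on three points:
\begin{itemize}
\item You keep the $Q_0^{-3/2}$ factor, which is harmless since $Q_0\ge1$ and $\psi(w^{\min}_t)\ge0$.
\item You note that the descent condition is needed at every $k$. At $k=0$ there is no $\zeta_0$, since $w^{\min}_0=y_0$, but that step of the chain is trivial because $x_0=y_0$; only $f(w^{\min}_t)\le f(y_0)$ is used there.
\item You note that line~2 may fire at earlier iterations, with \textsc{Restart-Handler} restoring $f(y_s)\le f(y_{s-1})\le f(y_0)$.
\end{itemize}

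The two justifications you add in (1), however, do not hold.

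\textbf{Restarts.} Suppose iteration $t-1$ ended in a restart. Then \textsc{Certify-Progress} returned at line~2, so its line-4 test was never evaluated at iteration $t-1$, for either $y^{\mathrm{agd}}_{t-1}$ or the post-restart $y_{t-1}$. The next round tests $y_t$, not $y_{t-1}$; only line~16 looks at the post-restart $y_{t-1}$. So in that case nothing supplies
\[
\|\nabla f(y_{t-1})\|^2\le 2L_1^{(t-1)}Q_{t-1}^{3/2}(3Q_{t-1})^m\psi(w^{\min}_{t-1})e^{-(t-1)/\sqrt{Q_{t-1}}},
\]
and your sentence about restarts is not correct. The paper's proof has the same hole: it simply asserts that line~4 failed at $t-1$. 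Its remark on swapping lines~2 and~4 does not repair this, because the test would then see the pre-restart iterate while line~16 sees the post-restart one. Closing the gap needs a separate argument for restarted iterations, for example working from the last non-restarted iteration and bounding runs of consecutive restarts.

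\textbf{The $\gamma$-increases.} That they are triggered only while the estimate is below $L_1$ gives $L_1^{(t)}<\gamma L_1$, not $L_1^{(t)}\le\bar L_1$. An increase from an estimate just below $L_1$ lands just below $\gamma L_1$. This can exceed $\bar L_1=\max\{L_1^{(0)},L_1/(2(1-c)\rho)\}$ whenever $\gamma>1/(2(1-c)\rho)$, as with the paper's $c=0.5$, $\rho=0.8$, $\gamma=2$. Your argument therefore proves the second inequality in (1) only with $\bar L_1$ replaced by $\max\{L_1^{(0)},L_1/(2(1-c)\rho),\gamma L_1\}$. That is exactly the enlarged $\bar L$ the paper switches to in Section~\ref{sec:inner_iters}; its proof of this corollary does not address the second inequality at all.
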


\begin{proof}
We prove each claim in turn.

\paragraph{Proof of (1).}
The bound (1) is clear for $t = 1$. For $t > 1$, the algorithm has not terminated at iteration $t-1$, and so we know that neither the condition in line 16 of \textsc{Modified-AGD} nor the condition in line 4 of \textsc{Certify-Progress} held at iteration $t-1$. Thus
\begin{equation*}
\varepsilon^2 < \|\nabla f(y_{t-1})\|^2\leq 2L^{(t-1)}_1Q_{t-1}^{3/2}{(3Q_{t-1})^m}\,\psi({w^{\min}_{t-1}})\,e^{-(t-1)/\sqrt{Q_{t-1}}},
\end{equation*}
which gives the bound (1) when rearranged.

\paragraph{Proof of (2).} Now we consider the returned vectors $x^{t}_0,y^{t}_0,u$ and $v$ from \textsc{Modified-AGD}. Note that $u,v \neq \textsc{Null}$ only if $w_t \neq \textsc{Null}$. Suppose that $w_t = y_0$. Then, by line 1 of \textsc{Certify-Progress}, we have
\begin{equation*}
f(y_t) - f(w_t)>\frac{2Q_{t}^2}{\sigma}\|\nabla f(w_t)\|^2= C\,
\psi(w_t) + \frac{2Q_{t}^2}{\sigma}\|\nabla f(w_t)\|^2,
\end{equation*}
where $\psi(w_t) = \psi(y_0) = 0$ and $C:=e^{-t/\sqrt{Q_{t}}}Q_{t}^{3/2}$. Since this contradicts the progress bound in Proposition~\ref{prop:unified-general}, we obtain the certificate of non-convexity by the contrapositive of \eqref{eq:pathwise-sc}: one of the conditions must not hold for some $0 \leq s <t$, implying \textsc{Find-Witness}  will return for some $j \leq s$.

Similarly, in the case where $w_t = w^{\min}_t$, the inequality $f(w^{\min}_t) \leq f(\zeta_t) \leq f(y_t) - \frac{1}{2L^{(t)}_1} \|\nabla f(y_t)\|^2$ holds. Consequently, line 4 of \textsc{Certify-Progress} implies that:
\begin{equation*}
\frac{1}{2L^{(t)}_1}\|\nabla f(y_t)\|^2>Q_t^{3/2}{(3Q_{t})^m}\,\psi({w^{\min}_t})\,e^{-t/\sqrt{Q_t}}.
\end{equation*}
By the progress guarantee we have
\begin{equation*}
f(y_t)-f(w^{\min}_t)\geq \frac{1}{2L^{(t)}_1}\|\nabla f(y_t)\|^2 > Q_t^{3/2}{(3Q_{t})^m}\,\psi({w^{\min}_t})\,e^{-t/\sqrt{Q_t}},
\end{equation*}
contradicting inequality \eqref{eq:unified-descent} in Proposition~\ref{prop:unified-descent}.

\paragraph{Proof of (3).}
To see that the bound (3) holds, note that $f(y_s)\leq f(y_0)$ for $s = 0,...,t-1$ since the condition in line 2 of \textsc{Certify-Progress} did not hold. If $u = y_j$ for some $0 \leq j < t$ then $f(u) \leq f(y_0)$ holds trivially. 

Alternatively, if $u = w_t = w^{\min}_t$, then by the algorithm's initialization, $w^{\min}_0 \gets y_0$. Since the sequence $f(w^{\min}_t)$ is monotonically decreasing, it follows that $f(u) = f(w^{\min}_t) \leq f(y_0)$.
\end{proof}

\textbf{Order of lines 2 and 4 in} \textsc{Certify-Progress.}
We briefly justify that swapping lines 2 and 4 in \textsc{Certify-Progress} does not affect the validity of Corollary~\ref{cor:modified_cor1}. The only way the ordering could matter is if the conditions in lines~2 and~4 were both satisfied in the same iteration.
\begin{itemize}
  \item (1): This part relies on the line~4 inequality at iteration \(t-1\), and in that iteration the line~4 return is assumed not to trigger. Additionally, all inequalities used are robust to restarts.  By the fact above, the ordering doesn't matter.
  \item (2): The argument only uses the bounds in lines~1 and~4, which do not depend on $f(y_t)\le f(y_0)$ and are robust to restarts, so swapping lines~2 and~4 has no effect.
  \item (3): In the proof, we only used that the condition in line~2 fails for iterations up to \(t-1\). Since there is no return from line~4 prior to $t$, by the fact above, the order is irrelevant. When \(u = w_t^{\min}\), we rely only on the initialization, so the ordering again plays no role.
\end{itemize}

To bound the number of steps $T$ of \textsc{Modified-AGD}, note that for every $w \in \mathbb{R}^d$
\[
\psi(w) = \hat{f}(y_0) - \hat{f}(w) + \frac{\alpha(M_k)}{2}\|w - y_0\|^2
= f(y_0) - f(w) - \frac{\alpha(M_k)}{2}\|w - y_0\|^2
\le \Delta_f .
\]

Let $L$ denote the $L_1$-Lipschitz constant of the \textbf{regularized objective} $\hat{f}(x) := f(x) + \alpha \|x - p_{k-1}\|^2$ which we pass to \textsc{Modified-AGD}; with the appropriate inputs to $\alpha$, we can define $\bar L$ and $\bar Q$ as uniform bounds. In practice, we take fixed backtracking constants $c=0.5,\rho = 0.8$, and $\gamma=2$ for Algorithm \ref{alg:ABLS}, satisfying $\frac{1}{2(1-c)\rho} \leq \gamma$, which gives
\[
  \bar L
  \;=\;
  \max\Bigl\{L^{(0)},\frac{L_1+2\alpha(\bar M)}{2(1-c)\rho},
  \gamma(L_1+2\alpha(\bar M)) \Bigr\}\;=\;
  \max\Bigl\{L^{(0)},2(L_1+2\alpha(\bar M))\Bigr\},
\]
\[
  Q_t := \frac{L^{(t)}}{\alpha(M_k)},
  \qquad
  \bar Q := \frac{\bar L}{\alpha(M_0)}.
\]

Therefore, substituting $\varepsilon = \epsilon/10$ and $\sigma = \alpha(M_k) = 2M_k^{1/3}\epsilon^{2/3}$ into the guarantee \eqref{eq: t_bound} of Corollary~\ref{cor:modified_cor1_main} we obtain,
\begin{align*}
T &\leq 1+\sqrt{\max\Bigl\{Q_0,4 + \frac{L_1}{M_k^{1/3}\epsilon^{2/3}}\Bigr\}} \log_+ \left( \frac{200\bar L\,\bar Q^{3/2}(3\bar Q)^m\Delta_f}{\epsilon^2} \right)\\
&\leq 1+\sqrt{\max\Bigl\{Q_0,4 + \frac{L_1}{M_0^{1/3}\epsilon^{2/3}}\Bigr\}} \log_+ \left( \frac{200\bar L\,\bar Q^{3/2}(3\bar Q)^m\Delta_f}{\epsilon^2} \right), 
\end{align*}
where $\log_+(\cdot)$ is shorthand for $\max\{0, \log(\cdot)\}$ and $\bar{M} := \max\{\gamma L_3,M_0\}$; $M_0\leq M_k \leq \bar M$, $m \leq \lfloor \log_{\min(\gamma, 1/\rho)} ((L_1+2\alpha(\bar M)) / L^{(0)}) \rfloor + 1$.

\subsubsection{Main Complexity Bound}\label{sec:main_bound}
\begin{theorem}\label{Thm2} Let $f : \mathbb{R}^d \to \mathbb{R}$ be $L_1$-smooth and have $L_3$-Lipschitz continuous third-order derivatives. Let $p_0 \in \mathbb{R}^d$, $\Delta_f = f(p_0) - \inf_{z \in \mathbb{R}^d} f(z)$ and $0 < \epsilon^{2/3} \le \min \{ \Delta_f^{1/2} \bar M^{1/6}, L_1 / (8 \bar M^{1/3}) \}$, where $\bar{M} := \max\{\gamma L_3,M_0\}$. If we set
\begin{equation*}
\sigma = \alpha(M_k) = 2 M_k^{1/3} \epsilon^{2/3}, \quad \gamma = 2
\end{equation*}
\textcolor{algoblue}{\textsc{PF-AGD}}$(f, p_0, L_1^{(0)}, M_0, \gamma, \epsilon)$ finds a point $p_K$ such that $\|\nabla f(p_K)\| \le \epsilon$ and requires at most
\begin{equation*}
27\frac{\Delta_fL^{1/2}_1\bar M^{1/3}}{M_0^{1/6}\epsilon^{5/3}}\log \left( \frac{1473L^{5/2+m}_1 (9/2)^m\,\Delta_f}{M_0^{1/2+m/3}\epsilon^{3+2m/3}} \right) when \quad L^{(0)}\leq 2(L_1+2\alpha(\bar M))
\end{equation*}

\begin{equation*}
16\frac{\Delta_f L^{(0)^{1/2}}\bar M^{1/3}}{M_0^{1/6}\epsilon^{5/3}}\log \left( \frac{193L^{(0)^{5/2}}(3Q_0)^m\,\Delta_f}{M_0^{1/2}\epsilon^{3}} \right) when \quad L^{(0)}\geq 2(L_1+2\alpha(\bar M))
\end{equation*}
gradient evaluations, where $m \leq \lfloor \log_{\min(\gamma, 1/\rho)} ((L_1+2\alpha(\bar M)) / L^{(0)}) \rfloor + 1$.
\end{theorem}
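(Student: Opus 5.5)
The proof combines the outer iteration bound \eqref{boundonK} with the per-call inner bound on $T$ derived just before the theorem, together with the gradient-evaluation accounting $G\le 4T+O(\log(\bar L_1/L_1^{(0)}))$ from Section~\ref{sec:partial-momentum}. The total count is then at most $K\cdot G$ up to the additive logarithmic terms. There are additionally at most $\mathcal{O}(\log_\gamma(L_3/M_0))$ extra calls to \textsc{Modified-AGD} caused by rejected $M_k$ (Lemma~\ref{Mbounded}); these are absorbed since they are $\epsilon$-independent. Correctness of the output is immediate: the algorithm only returns at line~19 when $\|\nabla f(p_k)\|\le\epsilon$. Termination follows because Lemma~\ref{lem:sufficient_progress} forces a decrease of at least $\epsilon^{4/3}/(10\bar M^{1/3})$ per accepted outer iterate, so $K\le 1+10\epsilon^{-4/3}\Delta_f\bar M^{1/3}$. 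Using $\epsilon\le \Delta_f^{3/4}\bar M^{1/4}$, the ``$1+$'' is absorbed, giving $K\le 11\Delta_f\bar M^{1/3}\epsilon^{-4/3}$.

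Next I simplify the inner bound. With $\sigma=\alpha(M_k)\ge\alpha(M_0)=2M_0^{1/3}\epsilon^{2/3}$ and $\bar L=\max\{L^{(0)},2(L_1+2\alpha(\bar M))\}$, I split into the two cases of the theorem.

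\textbf{Case $L^{(0)}\le 2(L_1+2\alpha(\bar M))$.} The hypothesis $\epsilon^{2/3}\le L_1/(8\bar M^{1/3})$ gives $4\alpha(\bar M)=8\bar M^{1/3}\epsilon^{2/3}\le L_1$. Hence $\bar L\le 2(L_1+L_1/2)=3L_1$, and
\[
\bar Q\le \frac{3L_1}{2M_0^{1/3}\epsilon^{2/3}}, \qquad \sqrt{\bar Q}\lesssim \frac{L_1^{1/2}}{M_0^{1/6}\epsilon^{1/3}}.
\]
Substituting into $\log(200\bar L\bar Q^{3/2}(3\bar Q)^m\Delta_f/\epsilon^2)$ produces exactly the form $L_1^{5/2+m}(9/2)^m\Delta_f/(M_0^{1/2+m/3}\epsilon^{3+2m/3})$ with some numerical constant. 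For instance, $200\cdot 3\cdot(3/2)^{3/2}\approx 1102$, adjusted by the powers of $2$ coming from $\sigma$; I expect the constant to come out near $1473$.

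\textbf{Case $L^{(0)}\ge 2(L_1+2\alpha(\bar M))$.} Here $\bar L=L^{(0)}$, no increase of $L$ beyond the initial value is needed, and the analogous substitution gives $\sqrt{\bar Q}\le (L^{(0)})^{1/2}/(\sqrt2 M_0^{1/6}\epsilon^{1/3})$. The logarithm then involves $(3Q_0)^m$ directly.

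Multiplying $4T\cdot K$ gives a prefactor of the form
\[
4\cdot 11\cdot c\,\frac{\Delta_f L^{1/2}\bar M^{1/3}}{M_0^{1/6}\epsilon^{5/3}}\log(\cdot).
\]
The leading ``$1+$'' in $T$ is absorbed by noting that the logarithm is at least a constant under the $\epsilon$ restrictions.

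The main obstacle is bookkeeping rather than ideas. The claimed constants $27$ and $16$ are smaller than the naive $4\cdot 11\cdot(\cdot)$, so I would need to count gradient evaluations per accepted step more carefully, using $2KT$ as in Section~\ref{subsec:main_bound} rather than $4KT$. I would also have to absorb the additive $1$'s and the $O(\log)$ backtracking terms into the logarithmic factor. I would first try $2KT$ with $K\le 11(\cdot)$ and $\sqrt{\bar Q}\le\sqrt{3/2}\,L_1^{1/2}M_0^{-1/6}\epsilon^{-1/3}$. This gives $2\cdot 11\cdot 1.22\approx 27$ in the first case, and $2\cdot 11/\sqrt2\approx 16$ in the second, matching the stated constants. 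The remaining work is to verify that the $\max\{Q_0,\cdot\}$ in $T$ is controlled by $\bar Q$ in each case.
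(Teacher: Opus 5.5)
Your final plan is essentially the paper's proof. It has the same ingredients: $K\le 11\Delta_f\bar M^{1/3}\epsilon^{-4/3}$, both entries of the $\max$ in the $T$-bound dominated by $\bar Q$, the two cases with $\sqrt{\bar Q}\le\sqrt{3/2}\,L_1^{1/2}M_0^{-1/6}\epsilon^{-1/3}$ resp.\ $(L^{(0)})^{1/2}/(\sqrt2\,M_0^{1/6}\epsilon^{1/3})$, and the count $2KT$ yielding $27$ and $16$. On the details you left open: the paper absorbs the additive $1$ in $T$ into the logarithm's argument, not the prefactor. It uses $1+\sqrt{Q_0}\log x\le\sqrt{Q_0}\log(ex)$, giving $200e/2^{3/2}\le 193$, and multiplies by $e^{1/\sqrt{12}}$ to turn $1103$ into $1473$; like you, it just uses $2KT$ and drops the $O(R)$ restart/rejected-step overhead (and the extra \textsc{Modified-AGD} calls from increasing $M_k$) rather than invoking $G\le 4T+O(\log)$.
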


\begin{proof}
The number of gradient evaluations is at most $2KT + \mathcal{O}(R)$, where $K$ is the number of iterations of \textsc{PF-AGD}, $T$ is the maximum number of accepted steps performed in any call to \textsc{Modified-AGD} and $R$ is the number of rejected or restarted steps.

Following the derivation in Section~\ref{sec:L_3}, the upper bound for $K$ is given by \eqref{boundonK}.
\begin{equation} 
\label{bound_on_K}
K\leq 1+10\epsilon^{-4/3}\Delta_f\bar M^{1/3}.
\end{equation}

From above we have also derived that
\begin{equation}
\label{bound_on_T}
T \leq 1+\sqrt{\max\Bigl\{Q_0,4 + \frac{L_1}{M_0^{1/3}\epsilon^{2/3}}\Bigr\}} \log_+ \left( \frac{200\bar L\,\bar Q^{3/2}(3\bar Q)^m\Delta_f}{\epsilon^2} \right). 
\end{equation}

Finally, we use $\epsilon^{2/3} \le \min\{\Delta_f^{1/2} \bar M^{1/6}, L_1/(8\bar M^{1/3})\}$ to simplify the bounds on $K$ and $T$. Using $1 \le \epsilon^{-4/3} \Delta_f \bar M^{1/3}$ reduces \eqref{bound_on_K} to $K \le 11 \epsilon^{-4/3}\Delta_f \bar M^{1/3}$.

We wish to bound the $\max$ term uniformly. Observe that we can bound the elements individually as
\[
Q_0= \frac{L^{(0)}}{\alpha(M_k)}\leq \frac{L^{(0)}}{\alpha(M_0)} \leq \frac{\bar L}{\alpha(M_0)} = \bar Q,
\]
and
\[
4 + \frac{L_1}{M_0^{1/3}\epsilon^{2/3}} = \frac{4\alpha(M_0)+2L_1}{\alpha(M_0)} \leq \frac{\bar L}{\alpha(M_0)} = \bar Q.
\]

Applying this to \eqref{bound_on_T} gives
\begin{equation*}
T \leq 1+\sqrt{\bar Q} \log \left( \frac{200\bar L\,\bar Q^{3/2}(3\bar Q)^m\Delta_f}{\epsilon^2} \right), 
\end{equation*}
where $\Delta_fL_1\epsilon^{-2}\geq8 \implies \Delta_f\bar L\epsilon^{-2}\geq16$ allows us to drop the subscript from the log. We can split into cases for a cleaner bound.

Assume first that $\bar L = L^{(0)}$, i.e., $L^{(0)}\geq 2(L_1+2\alpha(\bar M))$, so that $\bar Q = \frac{L^{(0)}}{\alpha(M_0)}$. Then
\begin{align*}
T & \leq 1+\sqrt{Q_0}\log \left(\frac{200L^{(0)^{5/2}}(3Q_0)^m\,\Delta_f}{\sigma^{3/2}\epsilon^2} \right)\\
& \leq Q_0^{1/2}\log \left( \frac{193L^{(0)^{5/2}}(3Q_0)^m\,\Delta_f}{M_0^{1/2}\epsilon^{3}} \right)\\
& \leq \frac{L^{(0)^{1/2}}}{\sqrt{2}M_0^{1/6}\epsilon^{1/3}}\log \left( \frac{193L^{(0)^{5/2}}(3Q_0)^m\,\Delta_f}{M_0^{1/2}\epsilon^{3}} \right), 
\end{align*}
where the second inequality follows from $Q_0 \geq1$ and $1+\log(x)=\log(e\cdot x)\quad \forall x>0$.

On the other hand, if $\bar L = 2(L_1+2\alpha(\bar M))\leq 3L_1$, i.e., $L^{(0)}\leq 2(L_1+2\alpha(\bar M))$, then
\begin{align*}
T &\leq 1+ \sqrt{\frac{2(L_1+2\alpha(\bar M))}{\alpha(M_0)}} \log \left( \frac{400(L_1+4 \bar M^{1/3} \epsilon^{2/3})\,\bar Q^{3/2}(3\bar Q)^m\Delta_f}{\epsilon^2} \right)\\
&\leq 1+ \sqrt{\frac{3L_1}{\alpha(M_0)}} \log \left( \frac{400(L_1+L_1/2)\,\bar Q^{3/2}(3\bar Q)^m\Delta_f}{\epsilon^2} \right)\\
& \leq 1+\sqrt{\frac{3}{2}}\frac{L^{1/2}_1}{M_0^{1/6}\epsilon^{1/3}} \log \left( \frac{600\cdot 2^{3/2}L_1\,(L_1+2\alpha(\bar M))^{3/2}(9L_1/\sigma)^m\Delta_f}{\alpha(M_0)^{3/2}\epsilon^2} \right)\\
& \leq 1+\sqrt{\frac{3}{2}}\frac{L^{1/2}_1}{M_0^{1/6}\epsilon^{1/3}} \log \left( \frac{1103L^{5/2}_19^m(L_1/(2 M_0^{1/3} \epsilon^{2/3}))^{m}\,\Delta_f}{M_0^{1/2}\epsilon^{3}} \right)\\
& \leq 1+\sqrt{\frac{3}{2}}\frac{L^{1/2}_1}{M_0^{1/6}\epsilon^{1/3}} \log \left( \frac{1103L^{5/2+m}_1 (9/2)^m\,\Delta_f}{M_0^{1/2+m/3}\epsilon^{3+2m/3}} \right).
\end{align*}

Finally, using $1\leq L_1/(8\bar M^{1/3}\epsilon^{2/3})$ so $S:=\sqrt{\frac{3}{2}}\frac{L^{1/2}_1}{M_0^{1/6}\epsilon^{1/3}}\geq \sqrt{\frac{3}{2}}\frac{L^{1/2}_1}{\bar M^{1/6}\epsilon^{1/3}}\geq \sqrt{12}$, we can multiply inside the log by $e^{1/S}\leq e^{1/\sqrt{12}}$ to absorb the additive constant $1103e^{1/\sqrt{12}} \leq 1473$.
\begin{equation*}
T \leq \sqrt{\frac{3}{2}}\frac{L^{1/2}_1}{M_0^{1/6}\epsilon^{1/3}} \log \left( \frac{1473L^{5/2+m}_1 (9/2)^m\,\Delta_f}{M_0^{1/2+m/3}\epsilon^{3+2m/3}} \right), 
\end{equation*}

Multiplying the two bounds for $T$ and $K$ with the factor $2$ gives the stated complexity bound.
\end{proof}

\paragraph{Discussion of the Complexity Bound.}
Theorem~\ref{Thm2} provides an explicit upper bound on the total number of gradient evaluations, which depends on the initialization of the $L_1$-Lipschitz constant estimate of the regularized objective $L^{(0)}$. In particular, the method requires at most
\[
27\frac{\Delta_f L_1^{1/2}\bar M^{1/3}}{M_0^{1/6}\epsilon^{5/3}}
\log \left( \frac{1473L^{5/2+m}_1 (9/2)^m\,\Delta_f}{M_0^{1/2+m/3}\epsilon^{3+2m/3}} \right)
\]
when $L^{(0)}\leq 2(L_1+2\alpha(\bar M))$, and
\[
16\frac{\Delta_f L^{(0)^{1/2}}\bar M^{1/3}}{M_0^{1/6}\epsilon^{5/3}}
\log \left( \frac{193L^{(0)^{5/2}}(3Q_0)^m\,\Delta_f}{M_0^{1/2}\epsilon^{3}} \right)
\]
when $L^{(0)}\geq 2(L_1+2\alpha(\bar M))$. Ignoring quantities independent of $\epsilon$ ($\Delta_f, L_1, L^{(0)}, M_0, \bar M, m$), the method achieves a convergence rate of $\tilde{\mathcal{O}}(\epsilon^{-5/3})$ in both cases. The two-case structure arises because when $L^{(0)}$ is sufficiently large, it already serves as an adequate upper bound on the Lipschitz constant of the true objective, and the bound depends directly on $L^{(0)}$; for smaller initializations, it reduces to a bound expressed in terms of the true smoothness parameter $L_1$.

For comparison, Carmon et al. {\cite[Theorem 2]{pmlr-v70-carmon17a}} establish the bound
\[
20 \cdot \frac{\Delta_f L_1^{1/2}L_3^{1/6}}{\epsilon^{5/3}}
\log\left(\frac{500L_1\Delta_f}{\epsilon^2}\right).
\]

\paragraph{Remark (constants).}
The larger constants in Theorem~\ref{Thm2} relative to Carmon et al. {\cite[Theorem 2]{pmlr-v70-carmon17a}} arise from three sources: (i) the factor $L_3^{1/6}$ is replaced by the adaptive quantity $\bar M^{1/3}/M_0^{1/6}$; (ii) the additional $(9/2)^m$ and $(3Q_t)^m$ terms in the inner bound, along with dependence on $m$ in the exponents, account for condition number increments during backtracking; and (iii) the adaptive condition number introduces an additional $\bar{Q}^{3/2}$ factor inside the logarithm of the inner iteration bound, inflating the logarithmic constant from 500 to 1473. While the parameter-free approach may incur larger constants and logarithmic overhead, it retains the same $\epsilon^{-5/3}$ scaling without requiring prior knowledge of $L_1$ or $L_3$.

\newpage
\section{Implementation Details and Further Experiments}\label{app:implementation}
\subsection{Implementation Details} 
All algorithms are implemented in Python with $\epsilon = 10^{-4}$. Oracle calls count only evaluations of $\nabla f$. Unless otherwise stated, single-iterate methods are initialized at $x_0 = \mathbf{0}$ and AGD-style methods at $p_0 = \mathbf{0}$ (outer loop iterates), with $N = 1{,}000$ independent trials per experiment. For reproducibility, each single seed experiment uses a random seed of $0$. The index \(k\) denotes iterates for single-iterate methods; for AGD-style methods, \(k\) and \(t\) denote outer and inner iterates, respectively.

\paragraph{Simple gradient descent (GD).} 
Simple GD uses iterates of the form
\begin{equation}
\label{gd_algo}
x_{k} = x_{k-1} - \frac{1}{L_1}\nabla f(x_{k-1}),
\end{equation}
where $L_1$ is the true Lipschitz constant of the gradient. 

\paragraph{Armijo steepest descent (SD).}
In practice, $L_1$ is often unknown and non-uniform, and therefore needs to be estimated adaptively. A standard approach is Armijo backtracking line search, initializing $L^{(0)}_1 = 1$. Given a current estimate $L^{(k)}_1$, we try the Simple GD step \eqref{armijo_algo} with $L_1:=L^{(k)}_1$ and accept it if the Armijo sufficient decrease condition holds:
\begin{equation}
\label{armijo_algo}
f(x_{k})\ \le\ f(x_{k-1})-\,\frac{c}{L^{(k)}_1}\|\nabla f(x_{k-1})\|^2,
\end{equation}
where $c\in(0,1]$; in our experiments \(c = 10^{-4}\). If \eqref{armijo_algo} fails, we reject the step and increase the curvature estimate by doubling $L^{(k)}_1\leftarrow 2L^{(k)}_1$, and retry. When a step is accepted, we keep the resulting $L^{(k)}_1$ as the starting estimate for the next iteration.

\paragraph{Semi-adaptive \textsc{AGD-Until-Guilty}.} \emph{We remark here that within the experiments section of \citet{pmlr-v70-carmon17a}, they do not use the algorithm for which they proved theoretical convergence but instead switch to a practical variant.} As $L_1$ and $L_3$ are unknown, \citet{pmlr-v70-carmon17a} estimate $L_1$ via line search as above with $c = 1$, and double the estimate $L_1^{(t)}$ within an inner iteration whenever \eqref{armijo_algo} fails. Following \citet{pmlr-v70-carmon17a}, we want the algorithm implementation to be independent of the final desired accuracy $\epsilon$ and to avoid using $L_3$. Hence, we set $\epsilon' = \|\nabla f(p_{k-1})\|/10$ and use $\alpha = \sigma = C_1 \|\nabla f(p_{k-1})\|^{2/3}$, where $C_1=0.01$ is a hyperparameter. To avoid computing $\|\nabla f(y_t)\|$ at every iteration, we move the $\|\nabla f(y_t)\|<\epsilon$ check into \textsc{Certify-Progress} and perform it only once every 5 iterations. In \textsc{Certify-Progress}, they also check $\hat{f}(x_t) + \nabla \hat{f}(x_t)^T (y_t - x_t) > \hat{f}(y_t)$ and find that this substantially increases the method’s capability of detecting negative curvature; most of the non-convexity detection is due to this check. In addition to what Carmon et al. have implemented, for AGD-style methods, we initialize $L^{(0)}_1$ using a local finite-difference estimate of gradient variation at $x_0$: for random unit vectors $u$, we compute $\frac{\|\nabla f(x_{0}+hu)-\nabla f(x_{0})\|}{h}$ with $h\ll1$ and take the maximum.

\paragraph{PF-AGD.}
As in \textsc{AGD-Until-Guilty}, we wish to remove the dependence on $\epsilon$. Hence, we set $\epsilon' = \|\nabla f(p_{k-1})\|/2$ with $\sigma = \alpha(M_{k-1}) = C_1 M_{k-1}^{1/3} \|\nabla f(p_{k-1})\|^{2/3}$, where $C_1 = 0.01$ is a hyperparameter. Instead of checking \textsc{Certify-Progress} every iteration, we invoke it every 5 iterations. For backtracking, we take $c=0.5,\rho = 0.8$ with $\gamma=2$ and $M_0=10^{-5}$.

\paragraph{Nonlinear Conjugate Gradient (CG).}
The method is given by the following recursion \cite{PolakRibiere1969}:
\[
\delta_k
= -\nabla f(x_k)
+ \max\left\{
\frac{\nabla f(x_k)^{\top}\bigl(\nabla f(x_k)-\nabla f(x_{k-1})\bigr)}
{\left\lVert \nabla f(x_{k-1})\right\rVert^{2}}
,\; 0
\right\}\delta_{k-1}
,\qquad
x_{k+1}=x_k+\eta_k\delta_k,
\]
where $\delta_0=0$ and $\eta_k$ is found via backtracking line search, as follows. If $\delta_k^{\top}\nabla f(x_k)\ge 0$ we set $\delta_k=-\nabla f(x_k)$ (truncating the recursion). We set $\eta_k=2\eta_{k-1}$ and then check whether
\[
f(x_k+\eta_k\delta_k)\le f(x_k)+\frac{\eta_k\,\delta_k^{\top}\nabla f(x_k)}{2}
\]
holds. If it does, we keep the value of $\eta_k$; otherwise, we set $\eta_k=\eta_k/2$ and repeat. The key difference from the semi-adaptive scheme used for the rest of the methods is the initialization $\eta_k=2\eta_{k-1}$, which allows the step size to grow. Performing line search is crucial for conjugate gradient to succeed, as otherwise it cannot produce approximately conjugate directions. If we use the semi-adaptive step size above, performance becomes very similar to that of gradient descent.

\subsection{Further Experiments}\label{app:further_exp}
Table~\ref{tab:pfagd_empirical_summary} summarizes how \textsc{PF-AGD} compares qualitatively to the three methods introduced above on each of the following problem families.

\begin{table}[h]
\centering
\caption{Qualitative empirical comparison of \textsc{PF-AGD} against the main baselines.}
\label{tab:pfagd_empirical_summary}
\begin{tabular}{l ccc}
\toprule
\textbf{Problem family}
& \textsc{AGD-Until-Guilty}
& Marumo et al. \cite{doi:10.1137/22M1540934}
& Nonlinear CG \\
\midrule
\multicolumn{4}{@{}l}{\emph{ML tasks}}\\
\quad Robust linear regression \cite{Beaton1974}     & $\uparrow$ & $\uparrow$ & Competitive \\
\quad MNIST neural network & $\uparrow$ & $\uparrow$ & Competitive    \\
\midrule
\multicolumn{4}{@{}l}{\emph{Convex / quadratic benchmarks}}\\
\quad Ill-conditioned quadratics    & $\uparrow$ & $\uparrow$ & Competitive       \\
\quad Regularised quadratics        & $\uparrow$ & $\uparrow$     & $\downarrow$ \\
\midrule
\multicolumn{4}{@{}l}{\emph{Non-convex test functions}}\\
\quad Qing \cite{Qing2006}                          & $\uparrow$ & $\uparrow$ & Competitive    \\
\quad Rosenbrock \cite{Rosenbrock1960AnAM}                   & Competitive  & $\uparrow$ & Competitive    \\
\quad Ackley \cite{Ackley1987}                        & $\uparrow$ & $\uparrow$     & $\downarrow$ \\
\quad Dixon--Price \cite{Dixon1989}                  & $\uparrow$ & $\uparrow$ & Competitive    \\
\quad Powell \cite{Powell1962}                       & Competitive  & $\uparrow$ & $\downarrow$ \\
\quad \textsc{SCosine}                       & Competitive  & $\uparrow$ & Competitive    \\
\bottomrule
\end{tabular}

\vspace{0.5em}
\small
\(\uparrow\): \textsc{PF-AGD} generally performs better; 
Competitive: \textsc{PF-AGD} performs comparably; 
\(\downarrow\): \textsc{PF-AGD} generally performs worse.
\end{table}

\begin{figure}[htbp]
  \centering
  \includegraphics[width=0.95\linewidth]{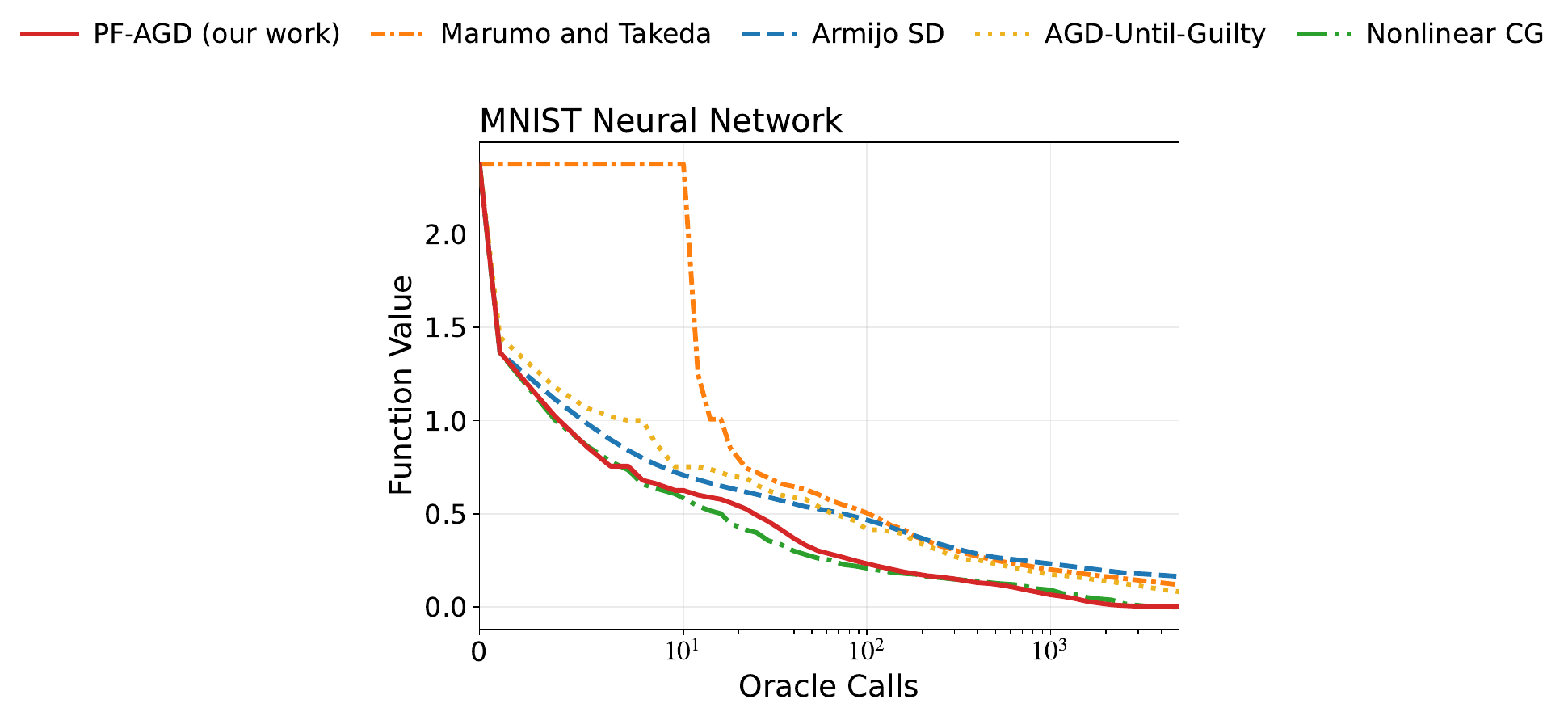}
  \caption{Performance of neural network training on MNIST (Section~\ref{section:experiments}).}
  \label{fig:mnist_nn}
\end{figure}

\subsubsection{Regularized Quadratic Functions}
Instead of $H$ positive definite, we induce negative curvature via the regularized quadratic form:

\[
f(x) = \frac{1}{2} x^{\top} Hx + b^{\top} x + \mu \|x\|_2^4, \qquad x \in \mathbb{R}^{d},
\]
where $b = [1, 1, \dots, 1]^{\top}$ and $\mu> 0$. We sample an indefinite Hessian $H\in\mathbb{R}^{d\times d}$ as $H=\bm{Q}D\bm{Q}^{\top}$, with $\bm{Q}$ Haar-distributed orthogonal as above and $D$ diagonal, containing both positive, negative, and zero eigenvalues. Given the negative eigenvalues of $H$, the quartic regularization term $\mu \|x\|_2^4$ ensures that the objective is bounded below. We choose $\mu=1$ for the objective to be sufficiently regularized so the algorithms can make good progress. The triple $(p,n,0)$ indicates the minimum number of positive, negative, and zero eigenvalues respectively, and $\mathcal{U}[\lambda_{min},\lambda_{max}]$ indicates the distribution of the remaining eigenvalues.

\paragraph{Results.} As illustrated in Figure~\ref{fig:regularized_quadratics}, nonlinear CG remains the most efficient approach, followed by \textsc{PF-AGD}. Despite the presence of negative curvature, which should theoretically favor its design, \textsc{AGD-Until-Guilty} proves to be the least effective baseline, plateauing below 75\% convergence across all four configurations. Notably, while Armijo SD matches the performance of \textsc{PF-AGD} and \citet{doi:10.1137/22M1540934} in the $U[-10,10]$ setting, this gap widens significantly as the spectral range expands to $U[-100,100]$. The inclusion of a zero eigenvalue, denoted by the $(p, n, 0)$ configurations, appears to have a negligible impact on relative performance, suggesting that the zero eigenvalue (degenerate) direction is quickly identified and avoided by all methods.

\begin{figure}[htbp]
  \centering
  \includegraphics[width=0.84\linewidth]{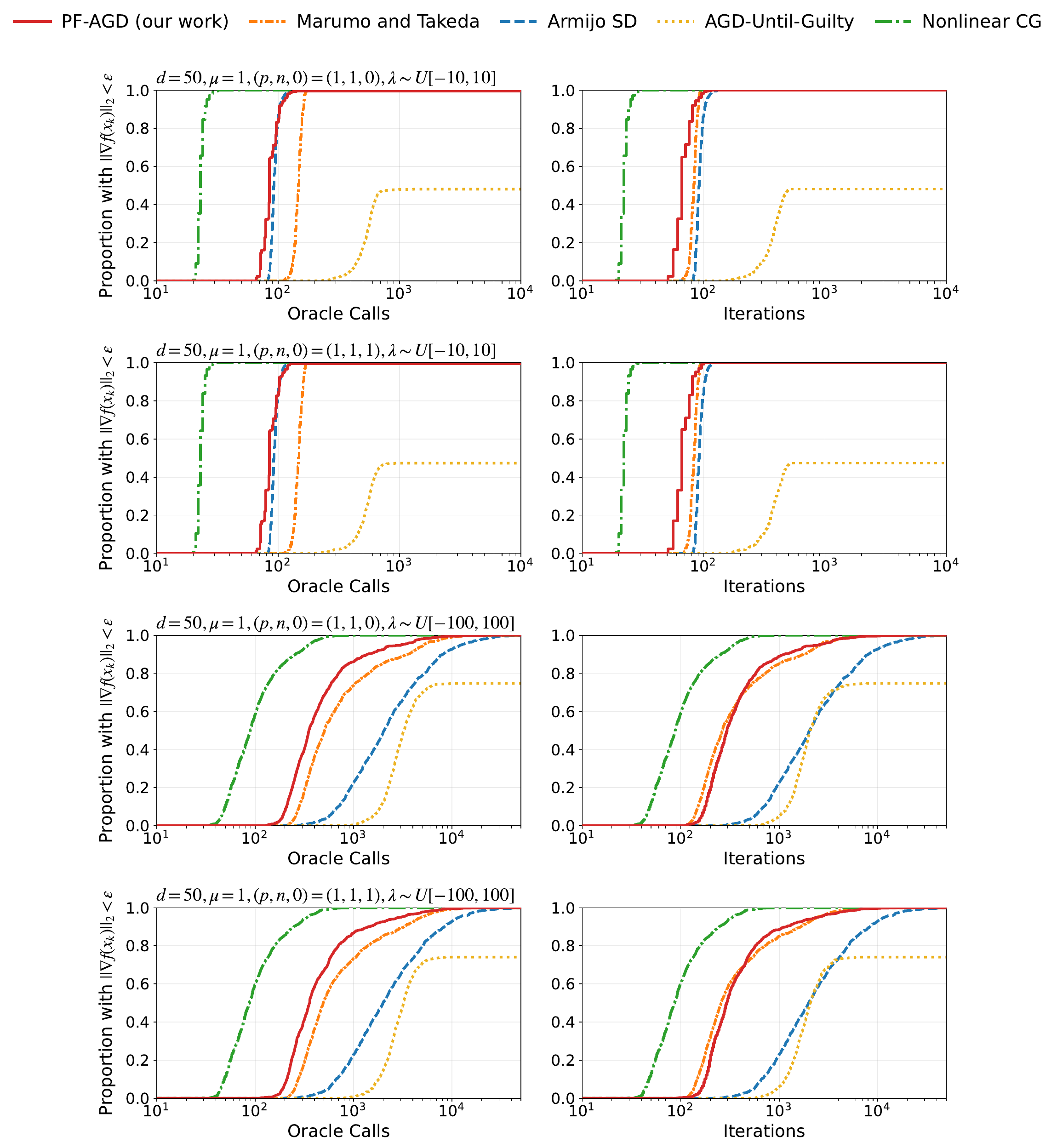}
  \caption{Performance on regularized quadratics with indefinite Hessians.}
  \label{fig:regularized_quadratics}
\end{figure}

\begin{figure}[t]
  \centering
  \includegraphics[width=0.95\linewidth]{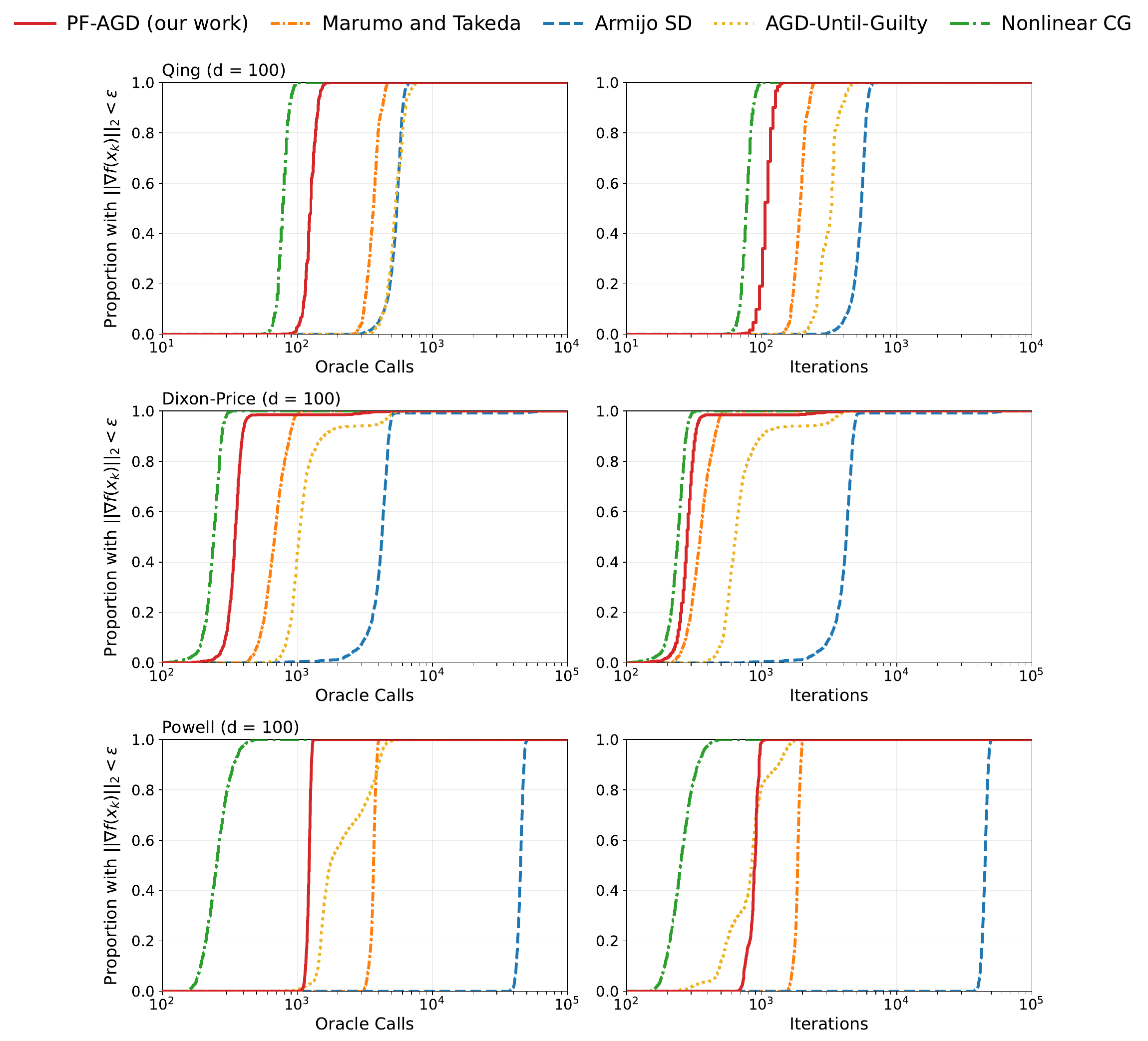}
  \caption{Empirical CDFs of oracle calls for the Qing (Appendix~\ref{subsec:qing}), Dixon--Price (Section~\ref{section:experiments}), and Powell (Appendix~\ref{subsec:powell}) functions.}
  \label{fig:dp-powell-qing-cdf}
\end{figure}
\clearpage

\subsubsection{Qing Function}
\label{subsec:qing}
For dimension $d \ge 1$, the Qing function \cite{Qing2006} is
\begin{equation*}
f(x) = \sum_{i=1}^{d} (x_i^2 - i)^2, \quad d \in \{4, 20, 60, 100\}.
\end{equation*}
The global minimum $f(x^*) = 0$ is attained at $x_i^* \in \{\pm\sqrt{i}\}$ for all $i \in \{1, \dots, d\}$, resulting in $2^d$ global minimizers. We set $x^* = [\sqrt{1}, \dots, \sqrt{d}]^\top$ and initialize $x_0, p_0 \sim \mathcal{N}(x^*, 10^{-1}I_d)$.

\paragraph{Results.} 
As shown in Figure~\ref{fig:dp-powell-qing-cdf}, all evaluated methods converged within 800 oracle calls. Nonlinear CG and \textsc{PF-AGD} proved most efficient, necessitating approximately 100 and 180 calls, respectively; whereas \citet{doi:10.1137/22M1540934} required 400 calls, and \textsc{AGD-Until-Guilty} and Armijo SD exhibited the slowest convergence with near-identical trajectories. Single-seed analysis, illustrated in Figure~\ref{fig:qing}, corroborates these findings. At $d=100$, the performance of \textsc{AGD-Until-Guilty} and Armijo SD remains closely aligned, which is consistent with the aggregate CDF results. However, at $d=4$, Armijo SD marginally outperforms the accelerated variants. This suggests that the overhead associated with acceleration may be counterproductive in low-dimensional regimes. As dimensionality increases, \textsc{PF-AGD} recovers its advantage, trailing nonlinear CG by a margin of at most 30 oracle calls. Upon termination, all methods reached similar objective values, successfully converging to the global minimizer at zero.

\begin{figure}[htbp]
  \centering
  \includegraphics[width=0.75\linewidth]{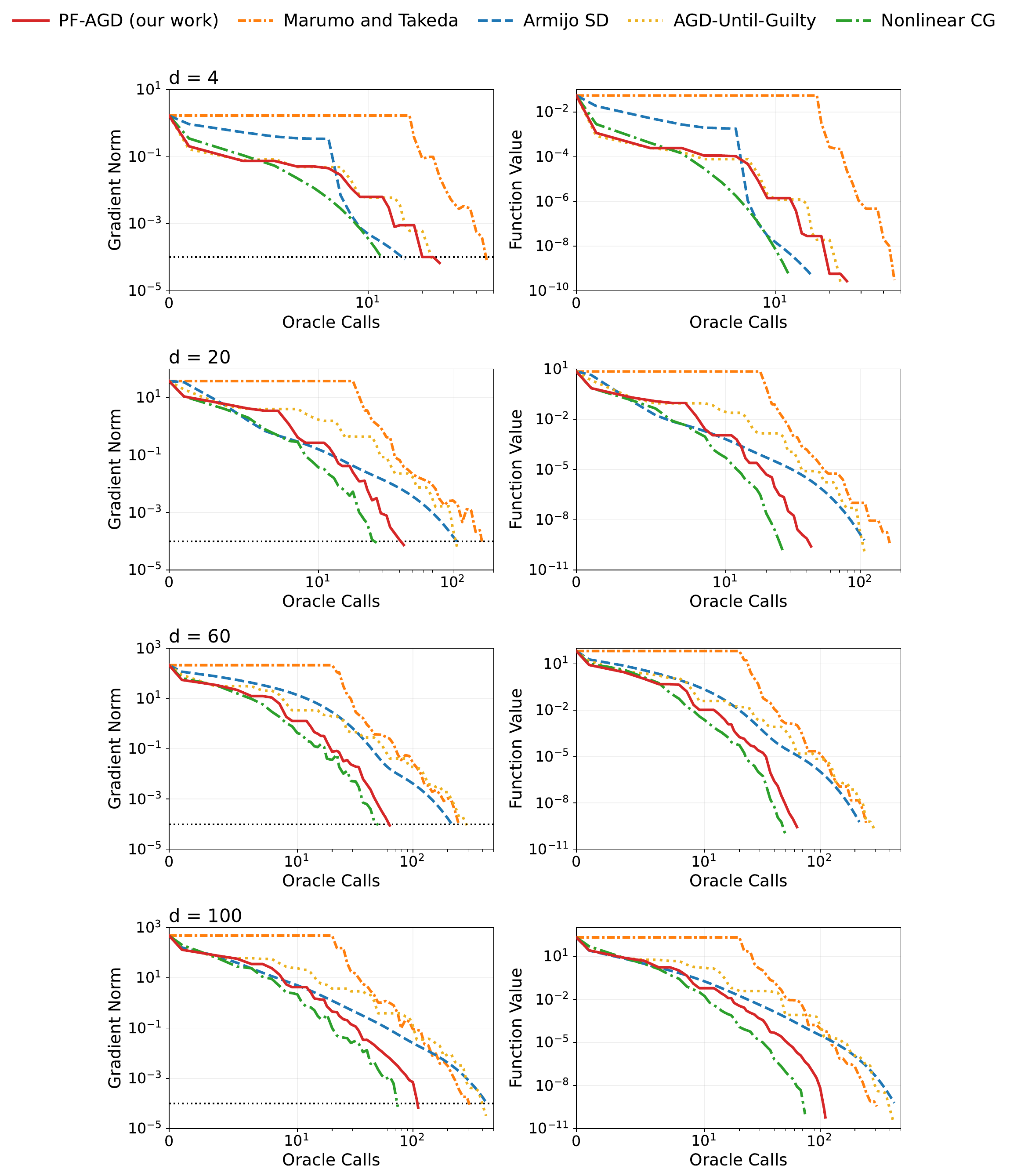}
  \caption{Performance on the Qing function.}
  \label{fig:qing}
\end{figure}

\subsubsection{Rosenbrock Test Function}
The Rosenbrock function \cite{Rosenbrock1960AnAM} is a non-convex function whose global minimum lies inside a narrow, parabolic-shaped (flat) valley. Finding the valley is straightforward but converging to the global minimum is difficult. Notably, the Rosenbrock function is not (globally) $L_1$-smooth. This makes the function a good performance test for the robustness of our $L_1$ estimates:
\[
f(x)=\sum_{i=1}^{d-1}\Bigl[100\bigl(x_{i+1}-x_i^2\bigr)^2+\bigl(1-x_i\bigr)^2\Bigr],
\qquad x=[x_1,\dots,x_d]^{\top}\in\mathbb{R}^d.
\]
The global minimizer is $x^*=[1, 1, \dots, 1]^{\top}$, where $f(x^*)=0$. For \(d=2\), we use the standard (challenging) initialization \(x_0,p_0 = [-1.2,1]^{\top}\); for \(d\in\{10,20,50\}\), we set \(x_0,p_0 = [-1.2,1,\ldots,-1.2,1]^{\top}\). We report both $f(x_t)$ and $\|\nabla f(x_t)\|$ as a function of the number of oracle calls, stopping once $\|\nabla f(x_t)\| \leq \epsilon$.

\paragraph{Results.} Figure~\ref{fig:rosenbrock} illustrates that nonlinear CG remains the most efficient method across all evaluated dimensions; however, the performance gap between it and the AGD-style methods narrows as the dimensionality increases. Overall, \textsc{PF-AGD} performs slightly worse than \textsc{AGD-Until-Guilty}, although it is notable that negative curvature is only detected and exploited in the $d=2$ case. In contrast, steepest descent proves to be the least efficient approach followed by \citet{doi:10.1137/22M1540934}, both exhibiting prolonged stagnation phases despite an Armijo SD having an initial rate of progress that matches nonlinear CG. Interestingly, the relative performance of the algorithms appears robust to dimensionality, with negligible impact on their respective convergence rates.

\begin{figure}[htbp]
  \centering
  \includegraphics[width=0.84\linewidth]{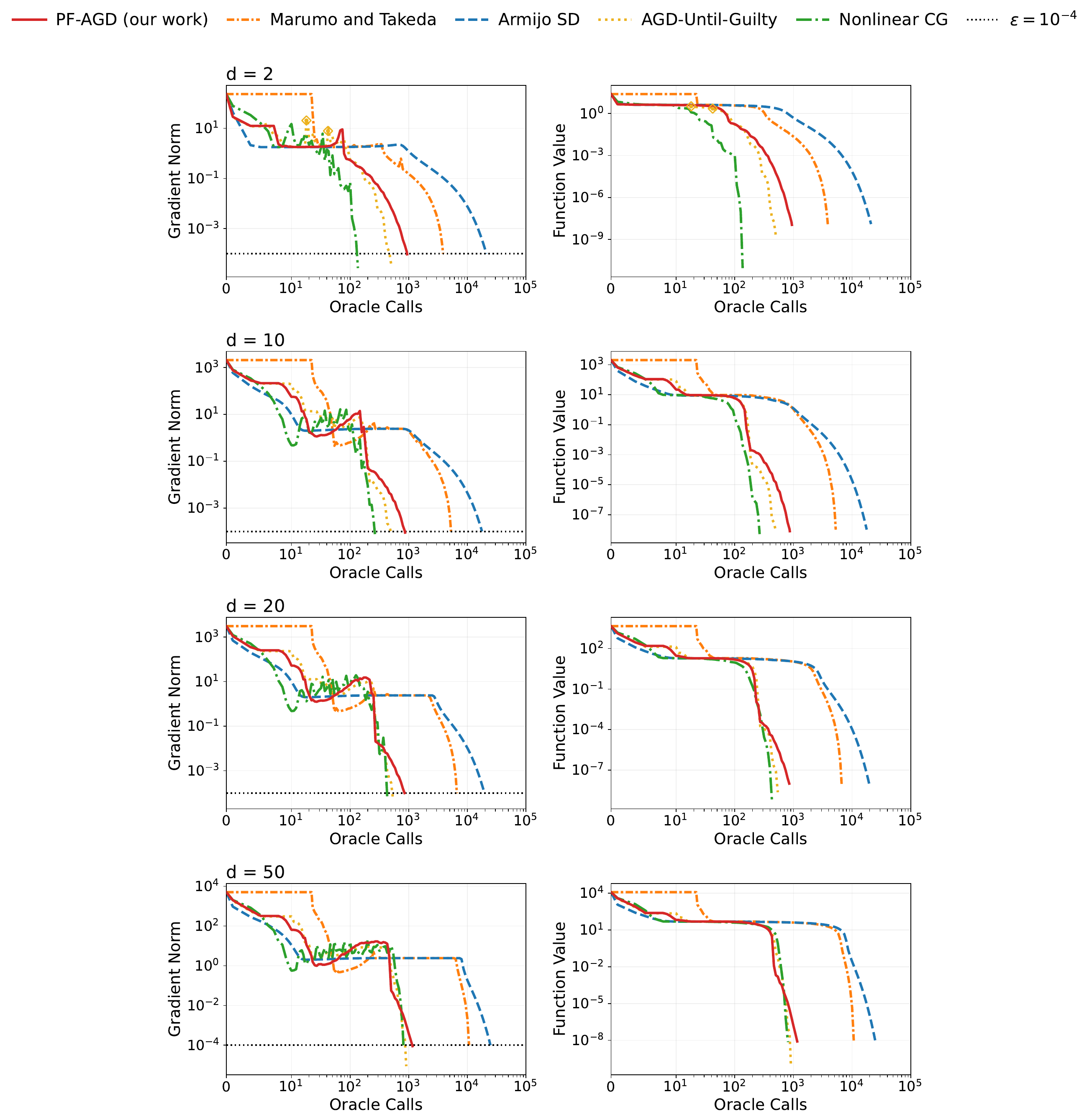}
  \caption{Performance on the Rosenbrock function.}
  \label{fig:rosenbrock}
\end{figure}

\subsubsection{Ackley Test Function}
We evaluate the algorithms on the Ackley function \cite{Ackley1987}:
\[
f(x) = -20 \exp\!\left(-0.2 \sqrt{\frac{1}{d}\sum_{i=1}^d x_i^2}\right)
      - \exp\!\left(\frac{1}{d}\sum_{i=1}^d \cos(2\pi x_i)\right)
      + e + 20, \quad x=[x_1,\dots,x_d]^{\top}\in\mathbb{R}^d.
\]
The origin is the global minimizer, with $f(x^*) = 0$, but the landscape around it features a near-flat outer region with high-frequency oscillations that induce many local optima where optimization methods may become trapped. As such, we initialize all methods near the global minimizer with a small perturbation to break the symmetry, setting $x_0,p_0 = [-1, -1, 0, \dots, 0]^{\top} + \mathcal{N}(\mathbf{0},10^{-4}I)$.

\paragraph{Results.} 
As shown in Figure~\ref{fig:ackley}, AGD-based methods achieve rapid initial reductions 
in gradient norm for $d \in \{2, 10, 20\}$; however, the high terminal function values 
suggest convergence to local minima rather than the global optimum. Among these, \textsc{PF-AGD} is the most oracle-efficient. Armijo SD fails to attain $\epsilon=10^{-4}$ accuracy for $d \in \{10, 20\}$, likely due to vanishing gradients in the flat regions characteristic of the Ackley landscape. At $d=50$, the additional degrees of freedom enable all methods to locate the global minimum. All algorithms encounter an initial plateau, though the AGD variants exhibit the most prolonged stagnation. Notably, \textsc{PF-AGD} and \citet{doi:10.1137/22M1540934} escapes this plateau after approximately $10^2$ oracle calls, substantially earlier than \textsc{AGD-Until-Guilty}, which requires around $3\times 10^4$ calls, indicating considerably greater robustness. The eventual descent of \textsc{AGD-Until-Guilty} coincides with several instances of negative curvature detection, as indicated by the markers.

\begin{figure}[htbp]
  \centering
  \includegraphics[width=0.8\linewidth]{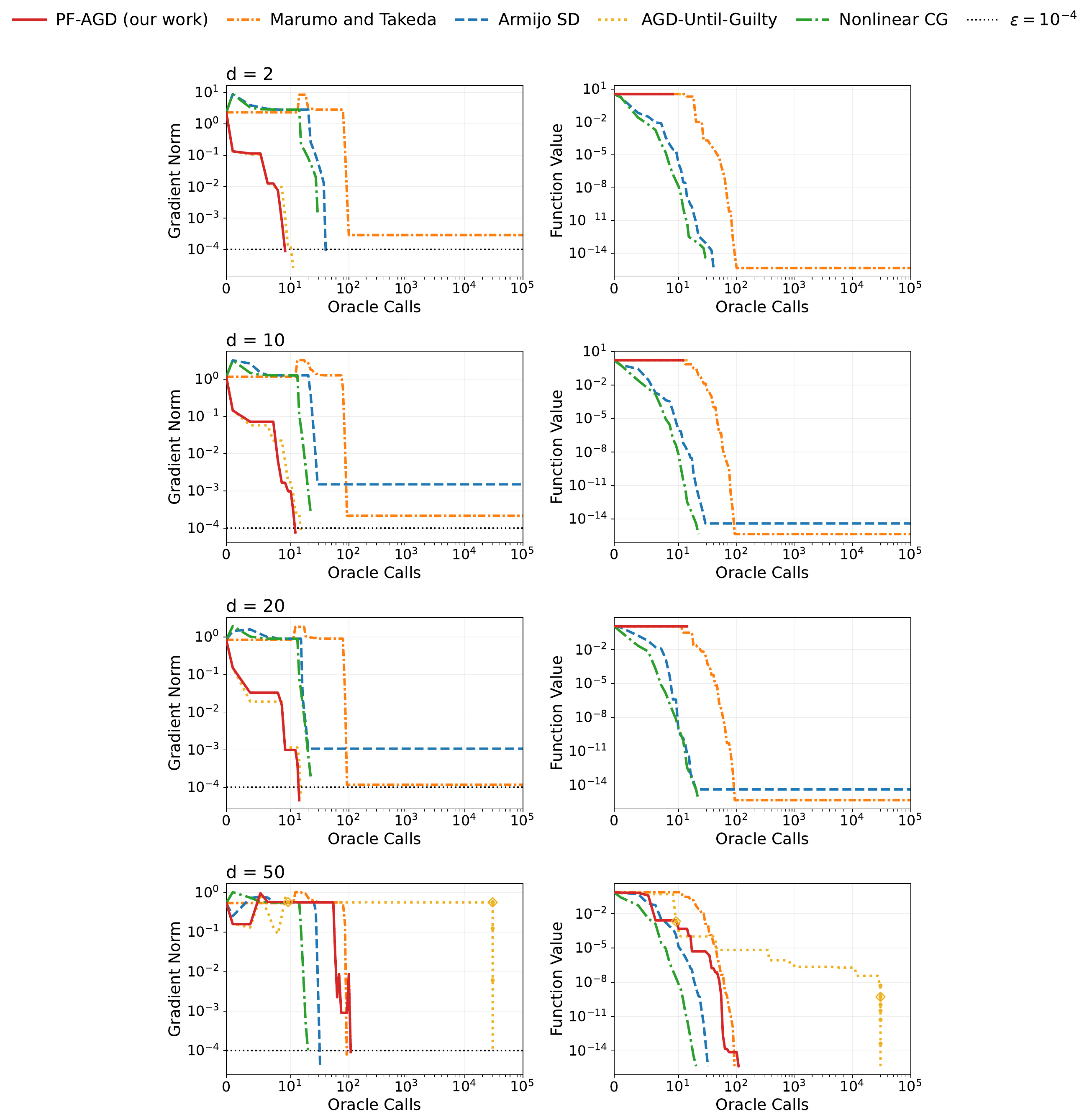}
  \caption{Ackley function: the dots correspond to negative curvature detection and the diamonds correspond to negative curvature exploitation (i.e., when $f(b^{(2)}) < f(b^{(1)})$).}
  \label{fig:ackley}
\end{figure}

\subsubsection{Powell Function}
\label{subsec:powell}
The Powell function \cite{Powell1962} is defined for dimensions $d$ that are multiples of $4$ (with $d\ge 4$). Partition variables into blocks
$(x_{4i-3},x_{4i-2},x_{4i-1},x_{4i})$, $i=1,\dots,d/4$, and set
\[
f(x) = \sum_{i=1}^{d/4}\Big[(x_{4i-3}+10x_{4i-2})^2 + 5(x_{4i-1}-x_{4i})^2 + (x_{4i-2}-2x_{4i-1})^4 + 10(x_{4i-3}-x_{4i})^4\Big].
\]
The global minimizer is located at the origin with $f(x^*) = 0$ and we initialize our algorithms at $x_0,p_0 \sim \mathcal{N}(x^*,10^{-1}I_d)$, testing with $d\in \{4,20,60,100\}$.

\paragraph{Results.} Figure~\ref{fig:dp-powell-qing-cdf} illustrates that at $d=100$, this problem proves particularly challenging for Armijo SD; indeed, almost no runs achieve convergence until a budget of $4 \times 10^4$ oracle calls. In contrast, nonlinear CG remains the most efficient optimizer, followed by \textsc{PF-AGD}, \textsc{AGD-Until-Guilty}, and the method proposed by \citet{doi:10.1137/22M1540934}. These trends are further corroborated by the single seed trace in Figure~\ref{fig:powell}. While \textsc{PF-AGD} and \textsc{AGD-Until-Guilty} exhibit nearly identical trajectories, Armijo SD requires approximately $5 \times 10^4$ calls to reach the target accuracy. Notably, the performance of \citet{doi:10.1137/22M1540934} is characterized by two distinct spikes in the gradient norm, and nonlinear CG displays high variance in its convergence.

\begin{figure}[htbp]
  \centering
  \includegraphics[width=0.84\linewidth]{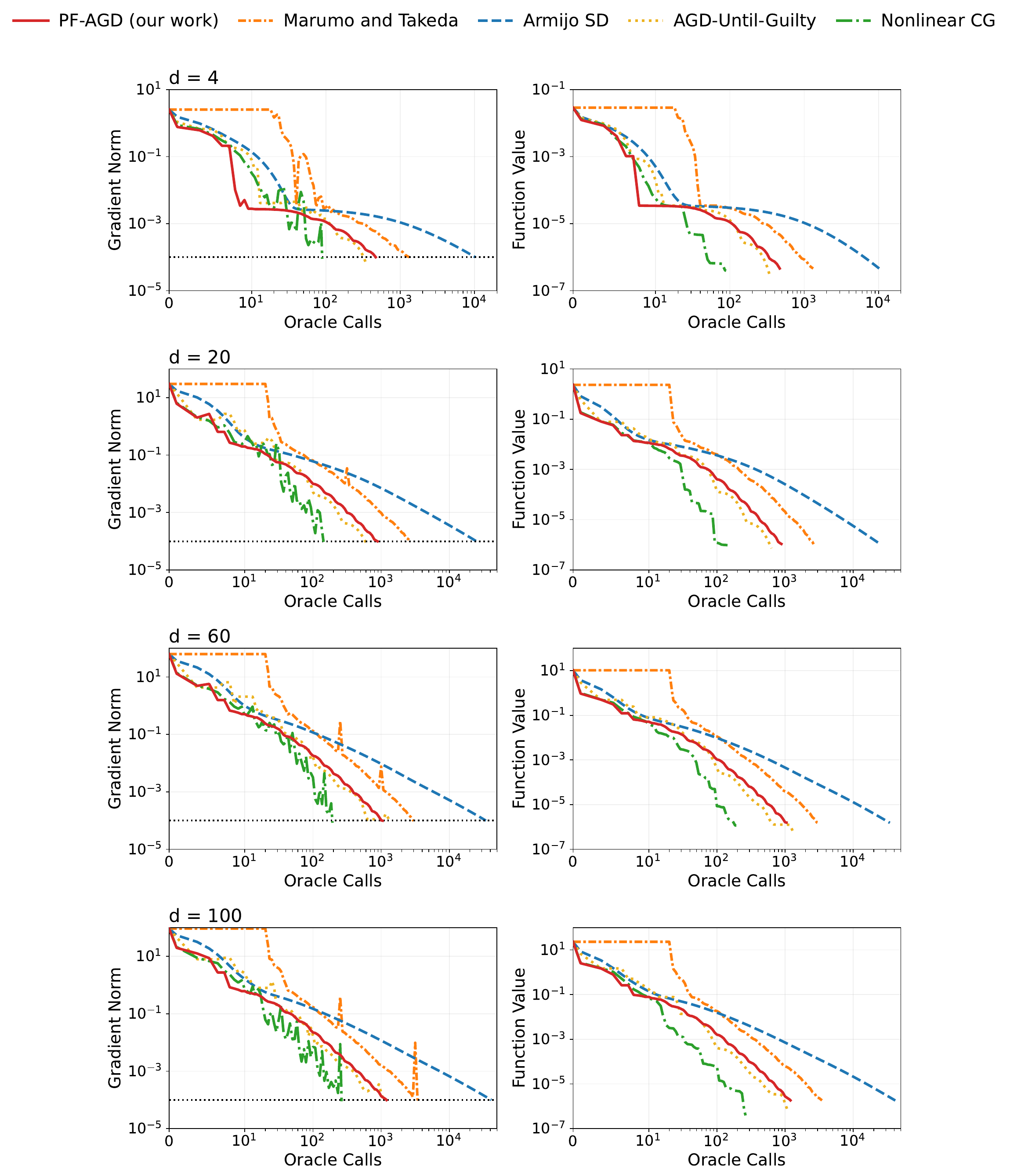}
  \caption{Performance on the Powell function.}
  \label{fig:powell}
\end{figure}

\subsubsection{\textsc{SCosine} Function}
For dimension $d \ge 1$, the \textsc{SCosine} function is
\[
f(x) = \sum_{i=1}^{d-1} \cos^{2}\left(x_i^{2} - \frac{x_{i+1}}{2}\right),
\quad x \in \mathbb{R}^d.
\]
There are infinitely many global minimizers, with $f(x^*) = 0$. One particular minimizer can be generated via the recursion, $x_1=0, x_{i+1} = 2x^2_i-\pi$, where each $x_i$ is a polynomial in $\pi$. We initialize the iterates at $x_0,p_0= [1,\dots,1]^\top$.

\paragraph{Results.}
Figure~\ref{fig:SCOSINE} displays the gradient norm and function value relative to oracle calls for dimensions $d \in \{10, 20, 50, 100\}$. The \textsc{SCosine} landscape is highly non-convex and characterized by abundant negative curvature. Armijo SD stalls across all tested dimensions, failing to progress towards the global minimizer; it requires approximately $2 \times 10^{3}$ oracle calls merely to approach $\epsilon$. Conversely, all other evaluated methods achieve function values below $10^{-8}$ within 40 oracle calls, with the gradient norm reaching the desired accuracy.

\begin{figure}[htbp]
  \centering
  \includegraphics[width=0.85\linewidth]{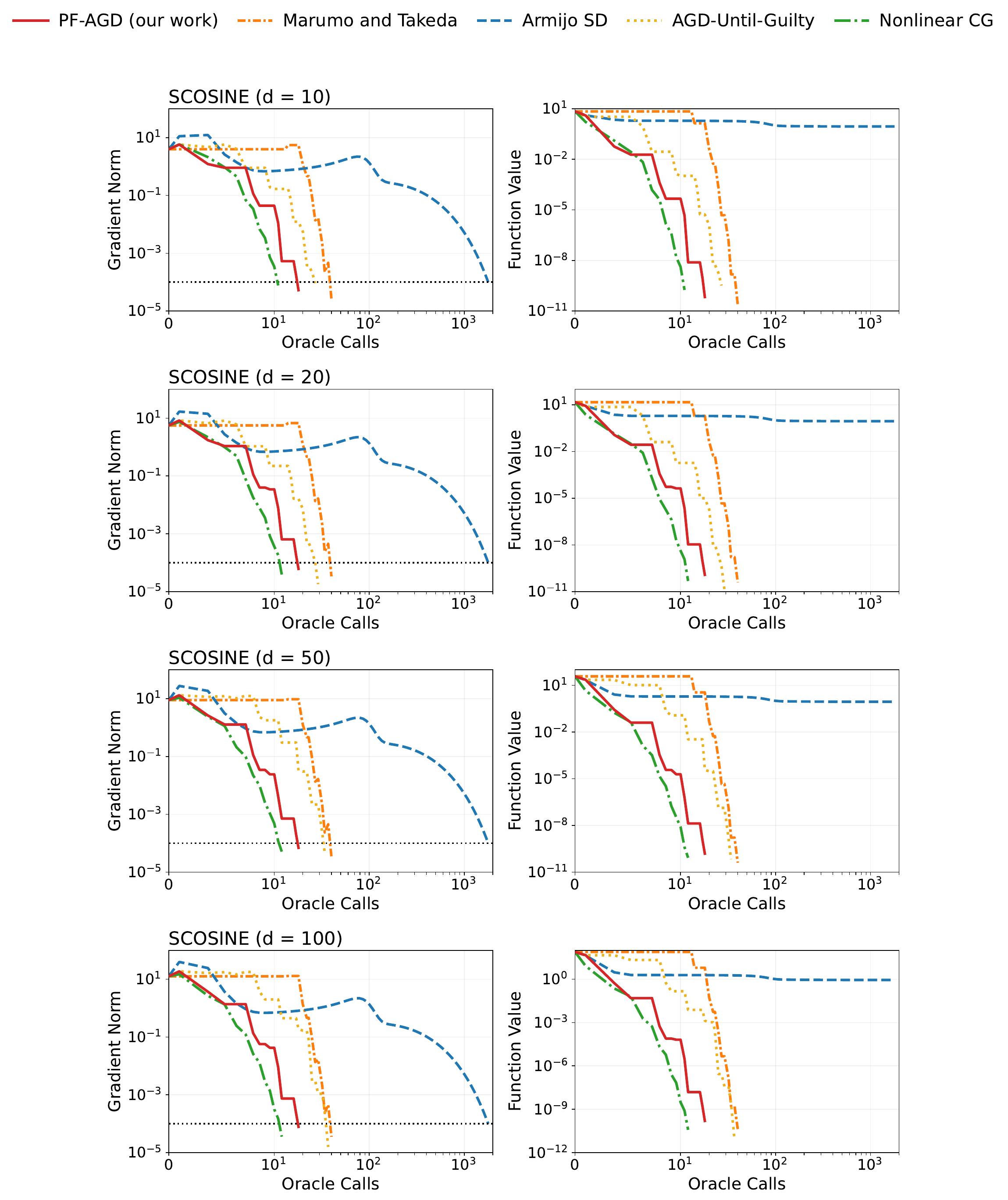}
  \caption{Performance on the \textsc{SCosine} function.}
  \label{fig:SCOSINE}
\end{figure}


\end{document}